
\documentclass{bcp}

\usepackage[active]{srcltx}
\usepackage{latexsym, amssymb}
\usepackage{amsthm, amsxtra}
\usepackage[colorlinks, bookmarks=true]{hyperref}


\newtheorem{theorem}{Theorem}[section]
\newtheorem{lemma}[theorem]{Lemma}
\newtheorem{corollary}[theorem]{Corollary}

\newtheorem{proposition}[theorem]{Proposition}
\theoremstyle{definition}
\newtheorem{definition}[theorem]{Definition}
\newtheorem{example}[theorem]{Example}

\theoremstyle{remark}
\newtheorem{remark}[theorem]{Remark}

\numberwithin{equation}{section}



\newcommand{\R}{\mathbb{R}}

\newcommand{\N}{\mathbb{N}}

\newcommand{\ep}{\epsilon}

\DeclareMathOperator{\diam}{diam}

\DeclareMathOperator{\vecspan}{span}



\renewcommand{\leq}{\leqslant}
\renewcommand{\geq}{\geqslant}\usepackage{amssymb}

\newcommand{\vr}{\epsilon}

\newcommand{\pp}{\mathbf P}

\newcommand{\tlx}{{\tilde{x}_0}}
\newcommand{\tla}{{\tilde{x}^{(\alpha)}}}
\newcommand{\con}{\mathrm{con}}
\newcommand{\lex}{< \! \! \! <}
\newcommand{\dist}{\mathrm{dist}}

\newcommand{\moco}{\boldsymbol{\sigma}}

\def \dim{{\mathrm{dim}} \, }

\renewcommand{\span}{\mathrm{span}}

\def \diam{{\mathrm{diam}} \, }

\def \eqalign#1{\null\,\vcenter{\openup\jot 
   \ialign{\strut\hfil$\displaystyle{##}$&$
      \displaystyle{{}##}$\hfil \crcr#1\crcr}}\,}



\begin{document}

\keywords{metric trees, hyperconvex spaces, measure of compactness, $\ep$-entropy,
$n$-widths, barycenter, type, cotype.}
\mathclass{Primary 54E35; Secondary 54E45, 54E50, 05C05,\\ 47H09, 51F99.}
\abbrevauthors{A. G. Aksoy and T. Oikhberg}
\abbrevtitle{Some results on Metric Trees}

\title{Some results on Metric Trees}

\author{Asuman G\"{u}ven Aksoy}
\address{Department of Mathematics,
        Claremont McKenna College\\
        Claremont, CA 91711\\
E-mail: aaksoy@cmc.edu}

\author{Timur Oikhberg}
\address{Department of Mathematics,
         University of California-Irvine\\
         Irvine, CA, 92697 and \\
Department of Mathematics,University of Illinois at Urbana-Champaign\\ Urbana, IL 61801\\
E-mail: toikhber@math.uci.edu}

\maketitlebcp

\begin{abstract} Using isometric embedding of metric trees into Banach spaces, this
paper will investigate  barycenters, type and cotype, and various
measures of compactness of metric trees. A metric tree ($T$, $d$)
is a metric space such that between any two of its points there is
an unique arc that is isometric to an interval in
$\mathbb{R}$. We begin our investigation by examining isometric
embeddings of metric trees into Banach spaces. We then
investigate the possible images  $x_0=\pi ((x_1+\ldots+x_n)/n)$,
where $\pi$ is a contractive retraction from the ambient Banach
space $X$ onto $T$ (such a $\pi$ always exists) in order to
understand the ``metric'' barycenter of a family of points
$ x_1, \ldots ,x_n$ in a tree $T$.
Further, we consider the metric properties
of trees such as their type and cotype. We identify various
measures of compactness of metric trees (their covering numbers,
$\epsilon$-entropy and Kolmogorov widths) and the connections
between them. Additionally, we prove  that the limit of the  sequence
of Kolmogorov widths of a metric tree is equal to its ball
measure of non-compactness.
\end{abstract}

\section{Introduction}\label{intro}

The study of injective envelopes of metric spaces, also known as metric trees,
(T-theory or $\R$-trees) began with J.~Tits \cite{Tits} in 1977 and since then,
applications have been found within many fields of mathematics. For an
overview of geometry, topology, and group theory applications,
consult Bestvina \cite{Best}. For a complete discussion of these spaces
and their relation to global NPC spaces we refer to
\cite{Brid}. Applications of metric trees in biology and medicine
involve phylogenetic trees \cite{Semp}, and in computer science involve
 string matching \cite{Bart}.
Universal properties of ``$\ell_1$ trees'' (this is a special class of separable
metric trees) in the family of separable complete metrically convex metric spaces have been
discovered, and used to investigate Lipschitz quotients of Banach spaces, in \cite{JLPS}.

Since metric trees are described by
three different names and several definitions,
we start with some definition we will use.
The first three are classical, and can be found, for instance,
in \cite{Blum} or \cite{Ev}.

\begin{definition}\label{D:mseg}
    Let $x, y \in M$, where ($M$, $d$) is a metric space.
    A \emph{geodesic segment from $x$ to $y$} (or a \emph{metric segment},
denoted by $[x,y]$) is the image of an isometric embedding
$\alpha : [a,b]\rightarrow M$ such that $\alpha(a)=x$ and $\alpha(b)=y$.
A metric space is called {\it geodesic} if any two points can be connected
by a metric segment.
    \end{definition}

\begin{definition}\label{D:mt2}
     A metric space $(M,d)$, is a {\it metric tree} if and only if for all $x,y,z \in
    M$, the following holds:
    \begin{enumerate}
        \item there exists a unique metric segment from $x$ to $y$,
        and
        \item $[x,z] \cap [z,y] = \{z\} \Rightarrow [x,z] \cup [z,y] = [x,y]$.
    \end{enumerate}
\end{definition}

\begin{definition}(Metric convexity)\label{D:metr_conv}
Suppose $T$ is a metric tree. A subset $S \subset T$ is called
{\it metrically convex} if $S$ contains the metric segment connecting
any two of its points. The {\it metric convex hull} of $S$
(the smallest metric convex set containing $S$) is denoted by $\con S$.
\end{definition}

\begin{definition}\label{D:leaves}
Suppose $T$ is a metric tree. Following \cite{AkBo}, we call a $t \in T$ a {\it leaf}
(or a {\it final point}) of $T$ if $t \in [x,y]$ (with $x,y \in T$)
implies $t \in \{x,y\}$.
\end{definition}

Below are  some examples of metric trees.

\begin{example}(The Real Tree)
Let $X_{\mathbb{R}}$ denote the set of all bounded  subsets of $\mathbb{R}$ which contain their infimum.
For all subsets $x$ and $y$ in $\mathbb{R}$, define a map $ d: X_{\mathbb{R}} \times X_{\mathbb{R}} \rightarrow \mathbb{R}$ by
$$ d(x,y):= 2 \max \{ \sup (x \triangle y), \inf x, \inf y\} -(\inf x +\inf y)$$ where by $x \triangle y$ we mean the symmetric difference of the sets $x$
and $y$. Then $d$ is a metric on $X_{\mathbb{R}}$, and $(X_{\mathbb{R}},d )$
 is a metric tree. For striking properties of this metric tree we refer to \cite{DMT}.
\end{example}



\begin{example}(The Radial Metric, Spider Tree)\label{E:radial}
    Define $d: \R^2 \times \R^2 \to \R_{\geq 0}$ by:
    \[
        d(x,y) =
            \begin{cases}
            \|x-y\| & \text{if $x = \lambda \, y$ for some $\lambda \in \R$,}\\
            \|x\| + \| y \| & \text{otherwise.}
            \end{cases}
    \]
    We can observe the $d$ is in fact a metric and that $(\R^2,d)$ is a metric tree.
\end{example}

\begin{example}(Finitely generated trees)\label{E:graph}
Suppose ${\mathcal{T}}$ 
is a weighted graph-theoretical tree, whose sets of vertices and
edges are denoted by ${\mathcal{V}}$ and ${\mathcal{E}}$, respectively.
Let ${\mathbf{d}}_e$ denote the length (weight) of the edge $e$.
We construct the metric tree $\tilde{\mathcal{T}}$, generated by
${\mathcal{T}}$, as a union of {\it elementary segments} $[v_1,v_2]$,
where $v_1, v_2 \in {\mathcal{V}}$ are adjacent.
In this case, $[v_1,v_2]$ is identified with the edge $e$,
connecting $v_1$ and $v_2$, and $d(v_1,v_2) = {\mathbf{d}}_e$.
The definition of the distance $d$ is then extended to
${\tilde{\mathcal{T}}}^2$ in the obvious way. It is easy to see that
all the tree axioms are satisfied.
\end{example}

\begin{example}(The spider with $n$ legs)\label{E:tripod}
In many of our examples, we shall consider a subtree of the radial
tree described above. Fix $n \in \N$, and a sequence of positive
numbers $(a_i)_{i=1}^n$, the {\it spider with $n$ legs} is defined
as a union of $n$ intervals of lengths $a_1, \ldots, a_n$, emanating
from the common center, and equipped with the radial metric. More precisely,
our tree $T$ consists of its center $o$, and the points $(i,t)$, with
$1 \leq i \leq n$ and $0 < t \leq a_i$. The distance $d$ is defined
by setting $d(o, (i,t)) = t$, and
$$
d((i,t),(j,s)) = \left\{ \begin{array}{ll}
   |t-s|   &   i=j   \cr
   t+s     &   i \neq j
\end{array} \right. .
$$
Abusing the notation slightly, we often identify $o$ with $(i,0)$.

We can consider this tree as a finitely generated tree (see Example~\ref{E:graph}),
arising from a ``spider-like'' graph, with vertices $v_0, v_1 \ldots, v_n$,
and edges of length $1$ connecting $v_0$ with $v_1, \ldots, v_n$.

The simplest spider -- that with three legs -- is called a {\it tripod}
(this terminology comes from \cite{St}).
\end{example}

   \begin{example}(Non-simplicial Tree)
   In general metric trees are more complicated than metric graphs. Metric graphs are spaces obtained by taking connected graphs and metrizing nontrivial edges. Such a graph is a metric tree if the corresponding  metric graph is connected and simply connected. For example, consider the set $[0,\infty) \times [0,\infty)$ with the distance
   $d: [0,\infty)\times [0,\infty) \to \R_{\geq 0}$  given by:
        \[
            d(x,y) =
            \begin{cases}
            |x_1-y_1| & \text{if $x_2 =y_2 $,}\\
            x_1+y_1+ |x_2-y_2| & \text{if $x_2 \neq y_2 $.}

            \end{cases}
           \]
Set $X_n= ( \mathbb{H}^n, \frac{1}{n} \,d)$, where $\mathbb{H}^n$ is a hyperbolic n-space. Then the ultraproduct $\prod X_n$ over some nontrivial ultrafilter $\mathcal{U}$ is the asymptotic  cone $\mathbb{H}_{\mathcal U}^n$ of $\mathbb{H}^n$, an example of a non-simplicial tree. In this metric tree, the complement of every point has infinitely many connected components. For further discussion of this space and construction of metric trees related to the asymptotic geometry of hyperbolic metric spaces we refer to \cite{Brid} and \cite{DP}.

   \end{example}

   We refer the reader to \cite{Blum} for the properties of metric segments and to \cite{AkBo}, \cite{AkKh} and \cite{Ev}
 for the basic properties of complete metric trees.
Below we list some useful notation and results. 

For $x, y$ in a metric space $M$, we sometimes write $xy = d(x,y)$.
For $x, y, z \in M$, we say  $y$ is \emph{between $x$ and $z$}, denoted
    $xyz$, if and only if $xz = xy + yz$.
The following facts will be used throughout the paper:
 \begin{enumerate}
 \item (Transitivity of betweenness \cite{Blum}) Let $M$ be a metric space and let 
$a,b,c,d \in M$. If $abc$ and $acd$, then $abd$ and $bcd$.
 \item  (Three point property \cite{AkBo}, \cite[Section 3.3.1]{Ev})
Let $x,y,z \in T$ ($T$ is a complete metric tree).
There exists (necessarily unique) $w \in T$ such that $$[x,z] \cap [y,z] = [w,z]\,\,\,
       \mbox{and}\,\,\, [x,y] \cap [w,z] = \{w\} $$
       Consequently, $$[x,y] = [x,w] \cup [w,y],\,\,\,[x,z] = [x,w] \cup [w,z],\,\,\,
       \mbox{and}\,\,\, [y,z] = [y,w] \cup [w,z].$$

 \item (Compactness, \cite{AkBo})
  A metric tree $T$ is compact if and only if
$$T = \displaystyle \bigcup_{f \in F} [a,f]\,\,\,\mbox{ for all}\,\,\, a \in T\,\,\mbox{and}\,\, \overline{F} \,\,\mbox{is compact}\,,$$ where  $F$ is the set of leaves of $T$.

 \end{enumerate}

We also need to mention several properties of metric spaces.

\begin{definition}\label{D:4pts}
A metric space $(X,d)$ is said to be {\it $0$-hyperbolic}
(or to satisfy {\it the four-point inequality}) \cite{DMT, Ev}
if, for any $x_1, x_2, x_3$, $x_4$ in $X$,
$$ 
d(x_1,x_2) + d(x_3,x_4) \leq \max\{ d(x_1,x_3) + d(x_2,x_4) ,
d(x_1,x_4) + d(x_2,x_3) \} .
$$ 
$X$ is said to satisfy {\it Reshetnyak's inequality} \cite{St} if, for any
$x_1, x_2, x_3, x_4 \in X$,
$$ 
d(x_1,x_2)^2 + d(x_3,x_4)^2 \leq d(x_1,x_3)^2 + d(x_2,x_4)^2 +
d(x_1,x_4)^2 + d(x_2,x_3)^2 .
$$ 
\end{definition}

It was proven in \cite{Dr} (see also \cite[Chapter 3]{Ev}) that any $0$-hyperbolic metric
space embeds isometrically into a metric tree. Moreover, a metric space $M$
is a metric tree if and only if it is $0$-hyperbolic and geodesic.
We see below (Lemma~\ref{l:4pts}) that the four-point inequality implies
Reshetnyak's inequality. The converse is not true, as an example
of a Hilbert space shows.

Further properties of metric spaces are encoded in the definition below.


\begin{definition}\label{D:NPC}
A geodesic metric space $X$ is called a {\it CAT(0) space} or a global metric space of non-positive curvature (NPC),
({\it global NPC space})
if for every three points $x_0, x_1, x_2 \in X$, the {\it CN Inequality} holds:
$$
d(x_0,y)^2 \leq
\frac{d(x_0,x_1)^2}{2} + \frac{d(x_0,x_2)^2}{2} - \frac{d(x_1,x_2)^2}{4}
$$
whenever $y$ is the midpoint of a metric segment connecting $x_1$ and $x_2$.
\end{definition}

For information on these spaces, the reader is referred to \cite{Brid}, \cite{St},
or \cite{O-P}.
In \cite{sato}, it was shown that a geodesic space is a CAT(0) space if and only if
it satisfies Reshetnyak's inequality.
The class of CAT(0) spaces includes metric trees (see Lemma~\ref{l:4pts}), as well as Hilbert spaces and hyperbolic spaces \cite{GR}.


Generalizing the classical Banach space notion of uniform convexity, we follow
\cite{GR} in defining the modulus of convexity for geodesic metric spaces.

\begin{definition}\label{D:convexity}
Suppose $(M,d)$ is a geodesic metric space. For numbers $R > 0$, $\vr \in [0,2R]$, and
$a \in M$, let $\moco_M(a,R,\vr) = \inf\{1 - d(m,a)/R\}$, where $m$ is the
midpoint of a metric segment connecting $x_1$ and $x_2$, and the infimum runs over
all pairs $(x_1,x_2)$ with $\max\{d(a,x_1), d(a,x_2)\} \leq R$, and
$d(x_1,x_2) \geq R \vr$. Define the {\it modulus of convexity} of $M$
by setting $\moco_M(R,\vr) = \inf_{a \in M} \moco_M(a,R,\vr)$.
\end{definition}

The CN inequality implies that, for any CAT(0) space $M$,
$$
\moco_M(R,\vr) \geq \moco_H(R,\vr) = \moco_H(\vr) = 1 - \sqrt{1 - \vr^2/4}
$$
(here, $H$ is the Hilbert space of dimension greater than $1$).
In Lemma~\ref{l:convex}, we obtain a sharper estimate on the moduli of convexity
of metric trees.

\section{Hyperconvexity and Metric Trees}

\begin{definition}\label{D:hc}
    A metric space $X$ is \emph{hyperconvex} if
    \[
    \bigcap_{i \in I} B_c(x_i;r_i) \, \not= \emptyset
    \]
    for any collection $\{B_c(x_i;r_i)\}_{i \in I}$ of closed balls in $X$ with
    $x_ix_j \leq r_i + r_j$.
\end{definition}

The notion of a hyperconvex metric space was introduced by Aronszajn and Panitchpakdi \cite{Aron}.  They proved the following theorem, which is now well known.

\begin{theorem}[Aronszajn and Panitchpakdi, \cite{Aron}]\label{T:neehc}
    $X$ is a hyperconvex metric space if and only if $X$ is a $1$ absolute Lipschitz retract; that is,
    for all metric spaces $D$, if $C \subset D$ and $f: C \to X$ is a nonexpansive
    mapping, then $f$ can be extended to the nonexpansive mapping
    $\tilde{f}:D \to X$.
\end{theorem}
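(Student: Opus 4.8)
The plan is to prove the two implications separately, since they rely on quite different mechanisms. The direction ``hyperconvex $\Rightarrow$ $1$-absolute Lipschitz retract'' will be handled by a transfinite, one-point-at-a-time extension argument via Zorn's Lemma; the converse will be obtained by adjoining a single auxiliary point to $X$ and applying the extension property to $\id_X$.

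For the forward implication, suppose $X$ is hyperconvex, fix metric spaces $C \subseteq D$ and a nonexpansive $f : C \to X$. I would partially order by extension the collection of all nonexpansive maps $g : C' \to X$ with $C \subseteq C' \subseteq D$ and $g|_C = f$. Every chain has an upper bound --- take the union of the domains and the union of the maps, noting that nonexpansiveness is a condition on pairs of points and any two points lie in a common member of the chain --- so Zorn's Lemma yields a maximal extension $g^{*} : C^{*} \to X$. The crux is to show $C^{*} = D$. If not, pick $p \in D \setminus C^{*}$ and look at the closed balls $B_c\bigl(g^{*}(x); d(x,p)\bigr)$, $x \in C^{*}$. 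For $x,y \in C^{*}$ one has $d\bigl(g^{*}(x),g^{*}(y)\bigr) \leq d(x,y) \leq d(x,p) + d(p,y)$, so the ball-intersection hypothesis of Definition~\ref{D:hc} is met; hyperconvexity then supplies a point $z$ in the intersection of these balls. Extending $g^{*}$ by $p \mapsto z$ gives a nonexpansive map on $C^{*} \cup \{p\}$ --- the only new constraints are $d(z, g^{*}(x)) \leq d(x,p)$, which hold by the choice of $z$ --- contradicting maximality. Hence $g^{*}$, now defined on all of $D$, is the required extension $\tilde f$.

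For the converse, suppose $X$ is a $1$-absolute Lipschitz retract and let $\{B_c(x_i; r_i)\}_{i \in I}$ be closed balls with $d(x_i, x_j) \leq r_i + r_j$ for all $i, j$ (we may assume $I \neq \emptyset$, the empty case being trivial). I would form $D = X \cup \{\omega\}$ with a new point $\omega$, keep the metric of $X$, and put
\[
d(\omega, x) \;=\; \inf_{i \in I}\bigl(r_i + d(x, x_i)\bigr), \qquad x \in X .
\]
The one inequality that genuinely needs verification is the triangle inequality for $x, y \in X$ routed through $\omega$. Fix $i, j \in I$; since $r_i + r_j \geq d(x_i, x_j)$,
\[
\bigl(r_i + d(x, x_i)\bigr) + \bigl(r_j + d(y, x_j)\bigr) \;\geq\; d(x, x_i) + d(x_i, x_j) + d(x_j, y) \;\geq\; d(x, y),
\]
and taking the infimum over $i$ and $j$ gives $d(x, \omega) + d(\omega, y) \geq d(x, y)$; this is the only place the hypothesis $d(x_i, x_j) \leq r_i + r_j$ enters. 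The remaining triangle inequalities are immediate, so $d$ is a pseudometric on $D$ (if some $d(\omega, x)$ vanishes we pass to the metric identification, which can only identify $\omega$ with a point of $X$ that already lies in every $B_c(x_i; r_i)$, so nothing is lost). Applying the defining extension property of $X$ to $\id_X : X \to X$ with ambient space $D$ produces a nonexpansive $\rho : D \to X$ fixing $X$ pointwise; set $z = \rho(\omega)$. For each $i$, using the index-$i$ term in the defining infimum, $d(z, x_i) = d\bigl(\rho(\omega), \rho(x_i)\bigr) \leq d(\omega, x_i) \leq r_i$, so $z \in \bigcap_{i \in I} B_c(x_i; r_i)$ and $X$ is hyperconvex.

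The chain bookkeeping for Zorn's Lemma and the remaining ``boring'' triangle inequalities for the extended metric are routine. The step I expect to demand the most care is pinning down the one-point extension in the converse: the formula for $d(\omega, \cdot)$ must be simultaneously a bona fide (pseudo)metric and must force $d(\omega, x_i) \leq r_i$, and it is exactly here that the defining inequality $d(x_i, x_j) \leq r_i + r_j$ gets consumed --- so this small construction is really the conceptual heart of the equivalence. (An alternative route for the converse would embed $X$ isometrically into $\ell_\infty(X)$, check that this space is hyperconvex, observe that the isometric copy of $X$ is then a nonexpansive retract, and transport the intersection point back; I would still favour the one-point extension, as it avoids the auxiliary hyperconvexity verification.)
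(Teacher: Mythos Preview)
Your argument is correct and is essentially the classical Aronszajn--Panitchpakdi proof: Zorn's Lemma plus the one-point extension via the ball-intersection property for the forward direction, and the auxiliary one-point enlargement of $X$ for the converse. There is nothing to compare against in the paper itself, since the authors do not prove this theorem; they merely quote it as a well-known result of Aronszajn and Panitchpakdi and cite \cite{Aron}.

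One small remark on the converse: your handling of the degenerate case $d(\omega,x)=0$ is fine, but you might state explicitly that $d(\omega,x_j)\leq r_j$ for every $j$ (take $i=j$ in the defining infimum), so that if $\omega$ collapses onto some $x\in X$ the triangle inequality in the pseudometric immediately gives $d(x,x_j)\leq r_j$ for all $j$; this is exactly what you assert, and making the one-line justification visible removes any appearance of circularity.
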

Hyperconvex spaces are complete and connected; the simplest example of hyperconvex space is the set of real numbers $\mathbb{R}$ or a finite dimensional real Banach space endowed with the maximum norm. While
the Hilbert space $\ell_2$ fails to be hyperconvex, the spaces
$\ell_{\infty}$ and $L_{\infty}$ are hyperconvex.
The connection between hyperconvex metric spaces and metric trees is given in the following theorem:

\begin{theorem}[\cite{KirkH}, \cite{Akso}]\label{T:cmt is hc}
    A complete metric tree $T$ is hyperconvex.
Conversely, any hyperconvex space with unique metric segments
is a complete metric tree.
\end{theorem}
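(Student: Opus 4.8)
The plan is to prove the two implications separately; in both directions the engine is the three-point property quoted in the introduction, while completeness is what is needed to manufacture the points whose existence hyperconvexity (resp.\ the tree axioms) demands.

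\emph{A complete metric tree $T$ is hyperconvex.} One must show that a family $\{B_c(x_i;r_i)\}_{i\in I}$ of closed balls with $d(x_i,x_j)\le r_i+r_j$ for all $i,j$ has nonempty intersection. I would argue in four steps. (i) Every pair of balls already meets: the point of $[x_i,x_j]$ at distance $\min\{r_i,d(x_i,x_j)\}$ from $x_i$ lies in $B_c(x_i;r_i)\cap B_c(x_j;r_j)$. (ii) Closed balls, hence finite intersections of them, are metrically convex (a quick consequence of the three-point property), and every nonempty closed metrically convex $C\subseteq T$ is \emph{gated}: for each $a\in T$ there is a $g\in C$ with $g\in[a,c]$ for all $c\in C$. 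To obtain $g$, pick $c_n\in C$ with $d(a,c_n)\to d(a,C)$; applying the three-point property to the triples $a,c_m,c_n$ puts the median on $[c_m,c_n]\subseteq C$ and yields $d(c_m,c_n)\le (d(a,c_m)-d(a,C))+(d(a,c_n)-d(a,C))$, so $(c_n)$ is Cauchy and its limit $g\in C$ has the stated property. (iii) Finitely many nonempty closed metrically convex sets that pairwise intersect have a common point: inductively $D=C_1\cap\cdots\cap C_{n-1}$ is nonempty, closed and convex, and if $g$ is the gate in $C_n$ of a point $d\in D$ then for each $i<n$ and any $z\in C_i\cap C_n$ we have $g\in[d,z]\subseteq C_i$, so $g\in D\cap C_n$. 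Combined with (i)--(ii), every finite subfamily of $\{B_c(x_i;r_i)\}$ has a common point. (iv) Pass to the infinite family: fix $i_0\in I$ and $a\in T$, and for finite $F\ni i_0$ let $p_F$ be the gate of $a$ in $C_F:=\bigcap_{i\in F}B_c(x_i;r_i)$. If $F\subseteq F'$ then $p_{F'}\in C_F$, so $p_F\in[a,p_{F'}]$; hence $d(a,p_F)$ is nondecreasing in $F$ and bounded by $d(a,x_{i_0})+r_{i_0}$, which forces the net $(p_F)$ to be Cauchy, and its limit lies in each closed $C_F$, producing a point of $\bigcap_{i\in I}B_c(x_i;r_i)$.

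\emph{The converse.} Let $X$ be hyperconvex with unique metric segments. Hyperconvex spaces are complete, and applying the defining condition to the balls $B_c(x;t)$ and $B_c(y;d(x,y)-t)$ shows $X$ is metrically convex, hence geodesic; together with uniqueness this is axiom~(1) of Definition~\ref{D:mt2}. For axiom~(2), suppose $[x,z]\cap[z,y]=\{z\}$. It suffices to show $z\in[x,y]$, i.e.\ $d(x,y)=d(x,z)+d(z,y)$ (the two are equivalent since segments are unique): granting this, $[x,z]$ and $[z,y]$ are precisely the two subarcs of $[x,y]$ determined by $z$, whence $[x,z]\cup[z,y]=[x,y]$. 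So put $a=d(x,z)$, $b=d(z,y)$, $c=d(x,y)$ and assume toward a contradiction that $c<a+b$. The three balls $B_c(x;(a+c-b)/2)$, $B_c(y;(b+c-a)/2)$, $B_c(z;(a+b-c)/2)$ have radii summing pairwise to exactly $d(x,y)$, $d(x,z)$, $d(z,y)$, so by hyperconvexity they share a point $w$, and the triangle inequality then forces $d(x,w)=(a+c-b)/2$, $d(y,w)=(b+c-a)/2$, $d(z,w)=(a+b-c)/2$. Consequently $d(x,w)+d(w,z)=d(x,z)$, so $w\in[x,z]$ by uniqueness of segments, and likewise $w\in[z,y]$; hence $w\in[x,z]\cap[z,y]=\{z\}$, giving $d(z,w)=(a+b-c)/2=0$, contrary to $c<a+b$. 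Therefore $d(x,y)=d(x,z)+d(z,y)$, axiom~(2) holds, and $X$ is a complete metric tree.

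The step I expect to need the most care is (iv): since closed balls in a metric tree need not be compact, one cannot simply invoke a finite intersection property, and the argument instead leans on the monotonicity of $d(a,p_F)$ together with completeness to extract a convergent net of gates. I would also want to check that the gate construction in (ii) uses only the exact form of the three-point property recorded in the introduction, and that in the converse the passage from the distance identity to the set equality $[x,z]\cup[z,y]=[x,y]$ rests solely on uniqueness of geodesics.
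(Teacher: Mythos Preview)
The paper does not actually prove this theorem; it is quoted from the literature (Kirk \cite{KirkH} for the forward direction, Aksoy--Maurizi \cite{Akso} for the converse) and stated without proof. So there is no ``paper's own proof'' to compare against.

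That said, your argument is correct and self-contained. The forward direction is essentially the standard route: closed balls in a tree are convex and gated, a Helly-type induction handles finite families, and completeness plus the monotonicity of $d(a,p_F)$ along the directed set of finite subfamilies pushes through to the infinite case. Your caution about step~(iv) is warranted but the argument you give is sound: for $F,F'\supseteq F_0$ one passes to $G=F\cup F'$, notes $p_F,p_{F'}\in[a,p_G]$, and reads off $d(p_F,p_{F'})=|d(a,p_F)-d(a,p_{F'})|$, which is small once $d(a,p_{F_0})$ is close to the supremum. The converse via the three incircle-radius balls is clean; the only point worth making explicit is that ``metrically convex $+$ complete $\Rightarrow$ geodesic'' is Menger's theorem, and that $d(x,w)+d(w,z)=d(x,z)$ forces $w\in[x,z]$ precisely because concatenating $[x,w]$ and $[w,z]$ gives a geodesic from $x$ to $z$, which must coincide with the unique one. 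Both of these you flag, and both are routine.
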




\section{Embeddings of Metric Trees into Banach spaces}\label{s:embed}

Henceforth, we consider {\it isometric} embeddings of metric
trees into Banach spaces. Note that there is a wealth of results concerning
{\it Lipschitz} embeddings of graphs (including trees) into Banach spaces.
In particular, the connections between Lipschitz embeddability of trees and
superreflexivity were investigated in
\cite{Bou}, and more recently, in \cite{Bau} (they are discussed in
more detail in Section~\ref{subs:superrefl}). The distortion necessary to embed
a metric tree into a uniformly convex Banach space can be found in e.g.~\cite{Mat}
(by \cite{LNP}, this problem is equivalent to computing the distortion of
embedding the corresponding finite tree).

\subsection{Embeddings into $L_\infty$}\label{subs:into_l_infty}
First we consider two embeddings into $L_\infty$, with different properties.

\begin{theorem}[From \cite{KirkHB}, page 395]\label{T:ms into L infty}
    Let $X$ be a metric space and $a \in X$, then
$J = J_a : X \to \ell^\infty(X) : x \mapsto (xm - am)_{m\in X}$
    is an isometric embedding of $X$ into $\ell^\infty(X)$.
\end{theorem}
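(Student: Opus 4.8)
The plan is to verify the two defining properties of an isometric embedding directly from the triangle inequality, with no machinery beyond it. First I would check that $J_a$ genuinely takes values in $\ell^\infty(X)$: for a fixed $x \in X$, the reverse triangle inequality gives $\abs{xm - am} \leq xa$ for every $m \in X$, so the function $m \mapsto xm - am$ is bounded and $\norm{J_a(x)} \leq xa < \infty$. This is precisely the step where the basepoint $a$ earns its keep — without subtracting $am$, the coordinates $m \mapsto xm$ need not be bounded, so the image would only land in the space of all real-valued functions on $X$, not in $\ell^\infty(X)$.

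Next I would compute, for arbitrary $x, y \in X$, the coordinatewise difference $J_a(x) - J_a(y) = (xm - ym)_{m \in X}$, since the $am$ terms cancel in every coordinate. Hence $\norm{J_a(x) - J_a(y)} = \sup_{m \in X} \abs{xm - ym}$. The triangle inequality yields $\abs{xm - ym} \leq xy$ for all $m$, giving the bound $\norm{J_a(x) - J_a(y)} \leq xy$; and evaluating the coordinate indexed by $m = y$ produces $\abs{xy - yy} = xy$, which forces the supremum to be attained and equal to $xy$. Therefore $\norm{J_a(x) - J_a(y)} = xy$, so $J_a$ is an isometry onto its image (in particular injective), which is exactly the claim.

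I do not expect any genuine obstacle here: the whole argument is two invocations of the triangle inequality — one for boundedness of the image, one (together with its equality case $m=y$) for the isometric property. The only subtlety worth spelling out for the reader is the structural role of $a$, namely that the translation by $(am)_{m\in X}$ is what converts the (possibly unbounded) Kuratowski-type assignment $x \mapsto (xm)_{m\in X}$ into an honest map into $\ell^\infty(X)$, while leaving all pairwise distances unchanged.
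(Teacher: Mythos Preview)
Your proof is correct and is precisely the standard Kuratowski-type argument: reverse triangle inequality for well-definedness into $\ell^\infty(X)$, cancellation of the $am$ terms, and evaluation at the coordinate $m=y$ to attain the supremum. There is nothing to compare against, however, since the paper does not supply a proof of this theorem at all --- it merely states the result with a citation to \cite{KirkHB} and moves on to call $J_a$ the canonical embedding.
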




The embedding $J_a$ defined above is called {\it canonical}.
When the space $X$ is bounded, we can also use the embedding $J(x)(y)= d(x,y)$.

We can also embed a metric space $X$ into a larger $L_\infty$ space.
To this end,
pick $t_0 \in X$, and denote by
${\mathcal{L}}_{X,t_0} = {\mathcal{L}}$ the space of
$1$-Lipschitz functions from $X$ to $\R$, vanishing at $t_0$.
Define the {\it universal embedding} of $X$ into
$\ell_\infty({\mathcal{L}})$ by setting, $U(t) = (f(t))_{f \in {\mathcal{L}}}$ for $t \in X$ .
Below we show that $U$ is indeed an isometric embedding,
satisfying a certain ``universal projective'' property.

\begin{theorem}\label{thm:universal}
\begin{enumerate}
\item
The map $U$ described above is an isometry.
\item
For any $1$-Lipschitz function $g : X \to \R$, there exists
a $1$-Lipschitz affine functional
$\tilde{g} : \ell_\infty({\mathcal{L}}) \to \R$, such that
$g = \tilde{g} \circ U$.
\item
For any $1$-Lipschitz function $g : X \to Z$, where $Z$ is
a $\lambda$-injective Banach space,
there exists a $\lambda$-Lipschitz affine map
$\tilde{g} : \ell_\infty({\mathcal{L}}) \to Z$, such that
$g = \tilde{g} \circ U$.
\end{enumerate}
\end{theorem}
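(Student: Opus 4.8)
The plan is to treat the three claims in turn, building each one on the previous. For part (1), the key observation is that for any two points $s,t \in X$, the function $f_{s}(\cdot) = d(\cdot, s) - d(t_0, s)$ is $1$-Lipschitz and vanishes at $t_0$, hence belongs to $\mathcal{L}$. Evaluating the sup-norm of $U(s) - U(t)$ from below using this particular coordinate gives $|f_s(s) - f_s(t)| = |{-d(t_0,s)} - (d(t,s) - d(t_0,s))| = d(s,t)$, while the upper bound $\|U(s)-U(t)\|_\infty \le d(s,t)$ is immediate since every $f \in \mathcal{L}$ is $1$-Lipschitz. So $U$ is an isometry. (One should remark that $U$ maps into $\ell_\infty(\mathcal{L})$ only after noting that each coordinate $f(t)$ is finite and the family $(f(t))_{f}$ is bounded — indeed $|f(t)| = |f(t) - f(t_0)| \le d(t,t_0)$, so $\|U(t)\|_\infty \le d(t,t_0) < \infty$; this also shows $U(t_0) = 0$.)

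For part (2), given a $1$-Lipschitz $g : X \to \R$, I want an affine $1$-Lipschitz $\tilde g$ on $\ell_\infty(\mathcal{L})$ with $g = \tilde g \circ U$. The natural candidate is the coordinate evaluation: set $g_0 = g - g(t_0)$, which lies in $\mathcal{L}$, and define $\tilde g\big((a_f)_{f\in\mathcal{L}}\big) = a_{g_0} + g(t_0)$, i.e. ``read off the $g_0$-coordinate and add back the constant $g(t_0)$''. This is manifestly affine (a coordinate functional plus a constant), it is $1$-Lipschitz because a single-coordinate evaluation has norm $1$ on $\ell_\infty(\mathcal{L})$, and $\tilde g(U(t)) = g_0(t) + g(t_0) = g(t)$. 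That settles (2).

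For part (3), let $Z$ be $\lambda$-injective and $g : X \to Z$ be $1$-Lipschitz. The idea is to factor through (2)-type data coordinatewise, or more cleanly: regard $U(X) \subset \ell_\infty(\mathcal{L})$, and try to extend the map $U(X) \to Z$, $U(t) \mapsto g(t)$ (which is well-defined and $1$-Lipschitz since $U$ is an isometry) to all of $\ell_\infty(\mathcal{L})$. By the definition of $\lambda$-injectivity, the $1$-Lipschitz map $g\circ U^{-1} : U(X) \to Z$ extends to a $\lambda$-Lipschitz map $G : \ell_\infty(\mathcal{L}) \to Z$ with $G|_{U(X)} = g \circ U^{-1}$, so $G \circ U = g$. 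The remaining point is to make $G$ \emph{affine}; for this I would pass to an affinization, replacing $G$ by the map obtained by restricting to the affine span or by an averaging/barycenter argument — but in fact the cleanest route is to first handle the scalar case: for each bounded functional one uses part (2), and then invoke that a $\lambda$-injective Banach space is, by definition, $\lambda$-complemented in any $\ell_\infty(\Gamma)$ containing it isometrically; compose the isometric embedding $Z \hookrightarrow \ell_\infty(\Gamma)$, extend each scalar coordinate $z^* \circ g$ (suitably normalized, $1$-Lipschitz) via part (2) to an affine $1$-Lipschitz functional on $\ell_\infty(\mathcal{L})$, assemble these into an affine $1$-Lipschitz map $\ell_\infty(\mathcal{L}) \to \ell_\infty(\Gamma)$ agreeing with $g$ on $U(X)$, and finally apply the norm-$\lambda$ projection $\ell_\infty(\Gamma) \to Z$ to land back in $Z$ with a $\lambda$-Lipschitz affine map $\tilde g$ satisfying $g = \tilde g \circ U$.

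I expect the main obstacle to be part (3), specifically arranging \emph{affineness} of the extension simultaneously with the $\lambda$-Lipschitz bound: a bare application of injectivity gives a Lipschitz but not affine extension, and pushing affineness through either requires the coordinatewise assembly described above (which is clean for $Z = \ell_\infty(\Gamma)$ and then transported via a norm-$\lambda$ projection) or an averaging argument. Parts (1) and (2) are routine sup-norm estimates. I would also take care at the outset to confirm $\mathcal{L}$ is nonempty and that $U$ genuinely lands in $\ell_\infty(\mathcal{L})$, as noted above.
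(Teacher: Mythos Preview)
Your proposal is correct and follows essentially the same approach as the paper: part (1) uses the distance function $f_x(t) = d(x,t) - d(x,t_0)$ for the lower bound, part (2) uses coordinate evaluation at $g - g(t_0) \in \mathcal{L}$, and part (3) embeds $Z$ isometrically into some $\ell_\infty(\Gamma)$, applies part (2) coordinatewise to obtain an affine $1$-Lipschitz map into $\ell_\infty(\Gamma)$, and composes with a norm-$\lambda$ projection back onto $Z$. Your exploratory remarks in part (3) about the direct injectivity approach failing affineness are apt, and the route you ultimately settle on is exactly the paper's.
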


\begin{proof}
(1) Fix $x, y \in X$, and show that $\|U(x) - U(y)\| = xy$.
As any $f \in {\mathcal{L}}$ is $1$-Lipschitz, the
definition of $U$ yields
$$
\|U(x) - U(y)\| = \sup_{f \in {\mathcal{L}}} |f(x) - f(y)| \leq xy .
$$
To prove the reverse inequality,
consider the function $f_x : X \to \R : t \mapsto xt - xt_0$.
Clearly, $f_x \in \mathcal{L}$, hence
$\|U(x) - U(y)\| \geq |f_x(x) - f_x(y)| = xy$.
Thus, $U$ is an isometry.

(2)
By translation, we can assume that $g(t_0) = 0$, hence
$g \in \mathcal{L}$.
Set $\tilde{g}(a) = a_g$ for $a = (a_f)_{f \in \mathcal{L}} \in \ell_\infty(\mathcal{L})$,
 Then $\tilde{g}$ is
a contractive linear functional. Moreover,
for any $x \in X$, $\tilde{g}(U(x)) = (U(x))_g = g(x)$,
as desired.

(3)
Fix an isometric embedding $I : Z \to \ell_\infty(\Gamma)$.
Let $P : \ell_\infty(\Gamma) \to Z$ be
a projection of norm not exceeding $\lambda$. We can view $I \circ g$ as a collection of maps $h_\gamma : X \to \R$ ($\gamma \in \Gamma$). By Part (2),
each of them admits a $1$-Lipschitz extension $\tilde{h}_\gamma$.
This results in a $1$-Lipschitz map $\tilde{h} = (\tilde{h}_\gamma) : \ell_\infty(\mathcal{L}) \to \ell_\infty(\Gamma)$, extending $I \circ g$.
We complete the proof by setting $\tilde{g} = P \circ \tilde{h}$.
\end{proof}

Note that the canonical embedding of $T$ into $\ell_\infty(T)$
need not share this property of the universal embedding. Indeed,
suppose $T = [0,1]$. Consider the function $g : T \to \R$,
defined by  setting $g(0) = 0$, $g(1/n) = 0$, $g((2n+1)/(2n(n+1))) =
1/(2n(n+1))$ ($n \in \N$), and letting $g$ be linear on each interval $[ \frac{1}{n+1}, \frac{2n+1}{2n(n+1)}]$ and $[\frac{2n+1}{2n(n+1)}, \frac{1}{n}]$.
  We claim that there is
no affine bounded map $\tilde{g} : \ell_\infty(T) \to \R$ such that
$\tilde{g} \circ J = g$, where $J$ is the canonical embedding.
To show this, recall the definition of $J$ (with $x^* = 0$):
for $x, y \in T$,
$$
h_x(y) = J(x)(y) = |x - y| - |y| =
\left\{ \begin{array}{ll}
   x - 2y   &   y \leq x   \cr
   -x       &   y \geq x
\end{array} \right. .
$$
For $n \in \N$, set $a_n = (1/n+1/(n+1))/2 = (2n+1)/(2n(n+1))$,
$b_n = (1/n-1/(n+1))/2 = 1/(2n(n+1))$,
$F_n = h_{1/n}$, and $G_n = h_{a_n}$. By definition, $g(F_n) = 0$,
and $g(G_n) = b_n$. Furthermore, $h_0 = 0$, and $g(h_0) = 0$.
Therefore, the extension $\tilde{g} : \ell_\infty(T) \to \R$,
if it exists, is a linear functional.

Let $H_n = F_{n+1} + F_n - 2 G_n$. A simple computation shows that
$$
H_n(y) = \left\{ \begin{array}{ll}
   2(1/n - y)       &   a_n \leq y \leq 1/n   \cr
   2(y - 1/(n+1))   &   1/(n+1) \leq y \leq a_n   \cr
   0                &   {\mathrm{otherwise}}
\end{array} \right. ,
$$
hence $\|H_n\| = \sup_y |H_n(y)| = 2 b_n$.
By linearity, $\tilde{g}(H_n) = - 2 b_n$.

For $N \in \N$, let $H = \sum_{n=1}^N (2b_n)^{-1} H_n$. Then $\|H\| = 1$,
and $\tilde{g}(H) = - N$. As $N$ is arbitrary, there
is no $\tilde{g}$ with the desired properties.

Note also that there need not be an ``injective'' counterpart of
the ``projective'' universal embedding $U$. More precisely,
suppose $T$ is the ``tripod'' tree, described in
Example~\ref{E:tripod}.
There is no isometric embedding
$A : T \to X$ with the property that, for any
isometric embedding $B : T \to Y$ ($X$ and $Y$ are
Banach spaces), there exists a contractive affine map
$V : Y \to X$ satisfying $V \circ B = A$. Indeed,
suppose, for the sake of contradiction, that there exists
an $A$ with this property.
Consider $B_1 : T \to \ell_\infty^2$, taking
$(1,t)$ to $t(1,1)$, $(2,t)$ to $t(1,-1)$, and
$(3,t)$ to $-t(1,-1)$. We can assume that $A(o) = 0$
(as before, $o$ denotes the ``root'' of $T$).
Suppose $V_1 \circ B_1 = A$, for some $V_1$. Then
$A(2,1) = - A(3,1)$.
Modifying $B_1$ to obtain the ``right'' $B_2$ and $B_3$,
we show that $A(1,1) = - A(3,1)$,
and $A(1,1) = - A(2,1)$. But these three equalities cannot hold
simultaneously.

\subsection{Embeddings into $L_1$}
Next we define the ``semicanonical'' embedding of $T$ into a space $L_1(\mu)$,
with the measure $\mu$ on $T$ defined below (we follow the construction from
\cite{godard}). For any two points $x, y \in T$, denote by $\phi_{xy}$
the isometry from  $[0,d(x,y)]$ to $[x,y]$, mapping $0$ to $x$.
A set $S \subset T$ is said to be measurable if $\phi_{xy}^{-1}(S)$ is a
Lebesgue measurable subset of $[0,d(x,y)]$ for any $x, y \in T$.
By the transitivity of betweenness (Section~\ref{intro})
the intersection of two metric segments
is either empty, a singleton, or a metric segment, hence any metric
segment is measurable.

For an interval $[x,y] \subset T$ and measurable $S \subset T$, we
define $\mu_{[x,y]}(S) = \lambda(\phi_{xy}^{-1}(S))$, where $\lambda$ is the
Lebesgue measure. Denote now by ${\mathcal{F}}$ the set of all finite unions
$F = \cup_{k=1}^n [x_k, y_k]$ of disjoint unions of metric segments.
For a measurable $S \subset T$, and $F$ as above, set
$\mu_F(S) = \sum_{k=1}^n \mu_{[x_k,y_k]}(S)$. Finally,
let $\mu_T(S) = \mu(S) = \sup_{F \in {\mathcal{F}}} \mu_F(S)$.
It is easy to see that $\mu$ is indeed a measure, vanishing on
countable sets, such that $\mu([x,y]) = d(x,y)$ for any $x, y \in T$.

The ``semicanonical'' embedding $U = U_{x_0}$ of $T$ into $L_1(\mu)$
($x_0$ is a point in $T$), is defined by $U(x) = \chi_{[x_0,x]}$.
To verify that $U$ is isometric, note that, for any $x, y \in T$, there exists a
unique $z \in [x_0,x]$ s.t. $[x,y] = [x,z] \cup [z,y]$, and
$[z,y] \cap [x_0,x] = \{z\}$. Then $U(x) - U(y) = \chi_{[x,z)} - \chi_{[y,z)}$,
hence
$$
\|U(x) - U(y)\| = \|\chi_{[x,z)}\| + \|\chi_{[y,z)}\| =
d(x,z) + d(y,z) = d(x,y) .
$$

An embedding of a finitely generated tree into $\ell_1^N$
is described into \cite[Section 2.5]{Ev}.

\subsection{A characterization of superreflexivity}\label{subs:superrefl}
Recall that a Banach space $X$ is called {\it superreflexive} if all its
ultrapowers are reflexive, or equivalently, any Banach space which can be
finitely represented in $X$ must be reflexive. The reader is referred to
\cite{Beau} for many properties and characterizations of superreflexive
spaces.

\begin{theorem}\label{thm:superrefl}
Suppose $X$ is a non-superreflexive Banach space, $T$ is a finitely
generated metric tree, and $\vr > 0$. Then there exists a
Banach space $Y$, $(1+\vr)$-isomorphic to $X$, such that $T$
embeds into $Y$ isometrically.
\end{theorem}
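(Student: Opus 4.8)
The plan is to exploit the well-known characterization of non-superreflexivity due to James: $X$ is non-superreflexive if and only if for every $\vr>0$ and every $N\in\N$ there exist vectors $x_1,\dots,x_N\in B_X$ and functionals $x_1^*,\dots,x_N^*\in B_{X^*}$ with $x_i^*(x_j)=1$ for $i\le j$ and $x_i^*(x_j)=0$ for $i>j$ (the ``$\ell_1^N$-tree'' or James' finite-tree condition). Equivalently, by Brunel–Sucheston / James, $X$ contains, for each $N$ and each $\vr>0$, a $(1+\vr)$-equivalent copy of a long enough initial segment of a basis generating a spreading model that is not boundedly complete, which is exactly the kind of ``fork'' structure one needs to build branching. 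Since $T$ is finitely generated, it is a finite union of elementary segments over a finite weighted graph-theoretic tree; in particular $T$ has finitely many vertices, finitely many edges, and bounded diameter.

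First I would reduce to the combinatorial skeleton: it suffices to isometrically embed the finite vertex set of $T$ (with its path metric) into a $(1+\vr)$-renorming $Y$ of $X$ in such a way that each edge $[v_i,v_j]$ maps to a genuine metric segment (an affine image of an interval) in $Y$; then one extends the embedding affinely along each edge, and the tree axioms together with the fact that distinct edges share at most a vertex guarantee that the resulting map on all of $T$ is isometric. So the real task is: given a finite graph-theoretic tree with $m$ edges and prescribed edge-lengths, realize its vertices in a renorming of $X$ with segments mapped to segments. Second, I would recall the $L_1$ picture from Section~3.2: a finite weighted tree embeds isometrically into $\ell_1^m$ by sending a vertex $v$ to $\sum_{e \in \text{path}(v_0,v)} \mathbf{d}_e\, e_e$ (signed appropriately), i.e.\ the semicanonical embedding specialized to the finite case. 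Thus it is enough to find, inside a $(1+\vr)$-renorming of $X$, a block-type sequence whose first $m$ coordinates behave $(1+\vr)$-isometrically like the unit vector basis of $\ell_1^m$ in a way compatible with the branching — this is precisely where James' finite-tree vectors enter, since the James condition is essentially a statement that certain $\{0,1\}$-combinations of the $x_i$ have norm close to their $\ell_1^N$ norm, which is what encodes segments-to-segments.

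Concretely, the key steps in order: (i) fix $\vr>0$ and let $m$ be the number of edges of $T$; choose $N$ large depending on $m$ and the edge weights, and extract James' almost-isometric $\ell_1^N$ finite-tree system $(x_i,x_i^*)$ in $X$; (ii) using the functionals $x_i^*$, define a new norm $\|\cdot\|_Y$ on $X$ as the original norm averaged/maximized against these functionals so that on the relevant finite-dimensional subspace the chosen branching combinations become exactly additive (isometric to the corresponding $\ell_1$ sums), while globally $\|\cdot\|_Y \le \|\cdot\| \le (1+\vr)\|\cdot\|_Y$; (iii) map the vertices of the graph tree into this subspace via the $\ell_1$-formula above, rescaled by the $\mathbf{d}_e$, and check that the path metric is reproduced isometrically in $Y$ by the construction in (ii); (iv) extend affinely along edges and invoke the three-point property / uniqueness of segments to conclude the extension is a global isometry $T\hookrightarrow Y$.

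The main obstacle I anticipate is step (ii): arranging a single renorming $Y$, uniformly $(1+\vr)$-close to $X$ on the whole space, under which a whole finite branching family of $\{0,1\}$-combinations is \emph{exactly} additive rather than merely approximately so. James' condition only gives $(1+\vr)$-almost-isometric behavior of the tree vectors, so one must absorb the approximation into the renorming — typically by defining $\|\cdot\|_Y$ as a gauge of the convex hull of a suitable family of slices determined by the $x_i^*$, and then verifying both the two-sided norm estimate and the exact additivity along every root-to-leaf branch simultaneously. Handling the branch points (where three or more segments meet) without destroying isometry on the other branches is the delicate bookkeeping; finite generation of $T$ is what makes this bookkeeping finite and hence manageable.
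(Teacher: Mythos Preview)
Your overall plan mirrors the paper's: reduce to an isometric embedding of the finite vertex set with edges sent to affine segments, exploit a characterization of non-superreflexivity to obtain vectors with $\ell_1$-like behavior, then renorm to turn approximate isometry into exact isometry. The reduction step and the affine extension along edges are handled exactly as you describe.

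There is, however, a genuine gap at the heart of your step (ii). You write that it suffices to find vectors in $X$ which ``behave $(1+\vr)$-isometrically like the unit vector basis of $\ell_1^m$,'' and that James' condition supplies this. It does not. James' condition (equivalently, the Schaffer--Sundaresan result the paper invokes) only guarantees that $\|\sum a_i x_i\|$ is close to $\sum|a_i|$ when the coefficient sequence $(a_i)$ has \emph{at most one sign change}. It gives no control on general $\{-1,0,1\}$-combinations, and indeed non-superreflexivity cannot force almost-isometric copies of $\ell_1^m$: consider $c_0$. Under the semicanonical $\ell_1$-embedding you propose, the difference of two vertex images is a $\{-1,0,1\}$-combination indexed by the edges on the path joining them, and there is no reason, a priori, for these combinations to be of the one-sign-change type.

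The paper closes this gap with a combinatorial device you do not mention: it orders the edges \emph{lexicographically} (relative to a root) and assigns to each vertex the vector $\sum_j d(v_{i_1\ldots i_{j-1}},v_{i_1\ldots i_j})\,x_{\phi(i_1\ldots i_j)}$, where $\phi$ is the monotone bijection for this order. The point is that with lexicographic ordering, the difference $J(v_{S_1})-J(v_{S_2})$ always has coefficients that change sign at most once, so the one-sign-change estimate from Lemma~\ref{l:separate} applies to every pair of vertices simultaneously. This is precisely the ``bookkeeping at branch points'' you flag as the main obstacle, and it is not handled by your proposal. For the renorming itself, the paper does not use gauges of slices or the dual functionals; it passes to a limit of the norms on $\span(x_{iM})$ to obtain an abstract $N$-dimensional space $Z$, embeds $Z$ into $\ell_\infty$, extends this map to all of $X$, and takes $\|x\|_Y=\max\{c^{-1}\|x\|,\|\tilde U x\|\}$. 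The separation condition $\dist(x_i,\span[x_j:j\neq i])>c$ in Lemma~\ref{l:separate} (obtained via a Ramsey-type result) is what ensures the limit is a genuine norm.
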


This theorem should be compared with the characterizations of superreflexive
Banach spaces due to F.~Baudier and J.~Bourgain \cite{Bau, Bou}. Their results
concern the {\it binary tree of height $n$} ${\mathcal{T}}_n = \cup_{j=0}^n \{-1,1\}^j$
($n \geq 0$), and the {\it infinite binary tree}
${\mathcal{T}}_\infty = \cup_{j=0}^\infty \{-1,1\}^j$.
We can view these objects as graphs, where the only
edges are those connecting $([\alpha])$ with $([\alpha], \pm 1)$
($\alpha \in \{-1,1\}^j$). The graph structure induces the
{\it hyperbolic distance} $d$, defined as follows.
For $\alpha = (\alpha_1, \ldots, \alpha_k) \in \{-1,1\}^k$ and
$\beta = (\beta_1, \ldots, \beta_\ell) \in \{-1,1\}^\ell$,
denote by $s = s(\alpha, \beta)$ the smallest integer $j$ for which
$\alpha_{j+1} \neq \beta_{j+1}$ (if $\alpha_1 \neq \beta_1$, or if either $k$
or $\ell$ equals $0$, set $s = 0$). Let $d(\alpha, \beta) = k + \ell - 2s$.

The {\it Lipschitz constant} of an embedding $f : A \to B$ between
metric spaces is defined as
$$
{\mathrm{L}}(f) = \sup_{x \neq y} \frac{d_B(f(x), f(y))}{d_A(x,y)}
\sup_{x \neq y} \frac{d_A(x,y)}{d_B(f(x), f(y))}
$$
(here $d_A$ and $d_B$ are the distances in the spaces $A$ and $B$,
respectively). We say that $A$ has a {\it Lipschitz embedding} into $B$
if there exists an embedding $f : A \to B$ with finite Lipschitz constant.
A family $(A_n)$ is said to have a {\it uniform Lipschitz embedding} into $B$
if there exist embeddings $f_n : A_n \to B$, with
$\sup_n {\mathrm{L}}(f_n) < \infty$.

J.~Bourgain \cite{Bou} proved that a Banach space $X$ is not superreflexive
if and only if the family $({\mathcal{T}}_n)$ has uniform Lipschitz embedding
into $X$. Recently, F.~Baudier \cite{Bau} established that ${\mathcal{T}}_\infty$
Lipschitz embeds into any non-superreflexive space. Together with
Theorem~\ref{thm:superrefl}, these results yield:

\begin{theorem}\label{thm:bou_bau}
For a Banach space $X$, the following are equivalent:
\begin{enumerate}
\item
$X$ is not superreflexive.
\item
There exists a Lipschitz embedding of ${\mathcal{T}}$ into $X$.
\item
There exist Lipschitz embeddings $f_n : {\mathcal{T}} \to X$ ($n \in \N$),
with $\sup_n {\mathrm{L}}(f_n) < \infty$.
\item
Any finitely generated metric tree embeds isometrically into an
isomorphic copy of $X$.
\end{enumerate}
\end{theorem}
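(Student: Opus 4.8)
The plan is to obtain the theorem by assembling three inputs: the theorem of J.~Bourgain \cite{Bou} (a Banach space is non-superreflexive if and only if the family $({\mathcal{T}}_n)$ of finite binary trees admits bi-Lipschitz embeddings into it with uniformly bounded distortions), the theorem of F.~Baudier \cite{Bau} (every non-superreflexive Banach space admits a bi-Lipschitz copy of ${\mathcal{T}}_\infty$), and Theorem~\ref{thm:superrefl} proved above. Concretely, I would prove the cycle $(1)\Rightarrow(2)\Rightarrow(3)\Rightarrow(1)$ together with the equivalence $(1)\Leftrightarrow(4)$; since the first three items then form a cycle and $(4)$ is tied to $(1)$, all the equivalences follow.

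The implication $(1)\Rightarrow(2)$ is precisely Baudier's theorem. For $(2)\Rightarrow(3)$, the only thing to verify is that, for each $n$, the combinatorial tree ${\mathcal{T}}_n$ with its hyperbolic metric is isometric to the corresponding subset of ${\mathcal{T}}_\infty$; this is immediate from the distance formula $d(\alpha,\beta)=|\alpha|+|\beta|-2s(\alpha,\beta)$ (with $|\alpha|$ the length of the word $\alpha$), which refers only to the two words and not to the ambient tree. Hence, if $f:{\mathcal{T}}_\infty\to X$ is a Lipschitz embedding, its restrictions $f_n=f|_{{\mathcal{T}}_n}$ satisfy ${\mathrm{L}}(f_n)\leq{\mathrm{L}}(f)$ for every $n$, so $\sup_n{\mathrm{L}}(f_n)<\infty$. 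The implication $(3)\Rightarrow(1)$ is the substantive direction of Bourgain's theorem: uniform bi-Lipschitz embeddability of $({\mathcal{T}}_n)$ into $X$ forces $X$ to be non-superreflexive (equivalently, in a superreflexive space the distortion of embedding ${\mathcal{T}}_n$ tends to infinity).

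For $(1)\Rightarrow(4)$ I would simply invoke Theorem~\ref{thm:superrefl}, applied to an arbitrary finitely generated metric tree $T$ and arbitrary $\vr>0$: it produces a space $(1+\vr)$-isomorphic to $X$ that contains $T$ isometrically. For $(4)\Rightarrow(1)$ I would specialize as follows. Let ${\mathcal{T}}_n$ be the combinatorial binary tree of height $n$ and let $\widetilde{{\mathcal{T}}_n}$ be the finitely generated metric tree it generates (Example~\ref{E:graph}), with every elementary segment of length $1$. Applying $(4)$ with $\vr=1$ gives a Banach space $Y_n$ that is $2$-isomorphic to $X$ and into which $\widetilde{{\mathcal{T}}_n}$ embeds isometrically. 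Inside $\widetilde{{\mathcal{T}}_n}$, the set of vertices carries exactly the hyperbolic metric of ${\mathcal{T}}_n$: for two vertices $v_\alpha,v_\beta$ the unique connecting arc passes through their last common ancestor and so has length $|\alpha|+|\beta|-2s(\alpha,\beta)$. Composing the isometric embedding ${\mathcal{T}}_n\hookrightarrow\widetilde{{\mathcal{T}}_n}\hookrightarrow Y_n$ with a $2$-isomorphism $Y_n\to X$ yields a bi-Lipschitz embedding of ${\mathcal{T}}_n$ into $X$ of distortion at most $2$, and this bound is independent of $n$; by Bourgain's theorem $X$ is not superreflexive.

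I do not expect a genuinely hard step here: the mathematical weight is carried by the cited theorems of Bourgain and Baudier and by Theorem~\ref{thm:superrefl}, and the remaining argument is bookkeeping of distortions. The one place needing (routine) care is the passage between the combinatorial trees ${\mathcal{T}}_n,{\mathcal{T}}_\infty$ and their metric realizations — checking that the hyperbolic graph metric on the vertices coincides with the metric-tree distance between the corresponding points, so that restricting an isometric embedding of the metric tree to the vertex set does not increase distortion. One caveat worth flagging: item $(4)$ must be understood quantitatively, with the isomorphic copy being $(1+\vr)$-isomorphic to $X$ for a prescribed $\vr>0$ (which is exactly what Theorem~\ref{thm:superrefl} delivers), since otherwise the distortions produced in $(4)\Rightarrow(1)$ would not be controlled.
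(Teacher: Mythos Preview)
Your proposal is correct and matches the paper's approach exactly: the paper offers no detailed proof but simply asserts that the theorem follows by combining Bourgain's theorem, Baudier's theorem, and Theorem~\ref{thm:superrefl}, and you have filled in precisely those routine verifications. Your caveat about reading item~(4) quantitatively is apt---without the $(1+\vr)$-control supplied by Theorem~\ref{thm:superrefl}, the implication $(4)\Rightarrow(1)$ would break down in infinite dimensions (any finitely generated tree embeds isometrically into $\ell_1^N$, hence into some renorming of, say, $\ell_2$).
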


\begin{remark}\label{r:distort}
It is easy to note that a strictly convex space cannot contain a tripod,
described in Example ~\ref{E:tripod} (a Banach space $X$ is called
strictly convex if the equality $2(\|x\|^2 + \|y\|^2) = \|x-y\|^2$ implies $x=-y$).
If $X$ is separable, we can find an injection $T : X \to \ell_2$, and equip $X$
with the equivalent strictly convex norm $|||x||| = (\|x\|^2 + \|Tx\|^2)^{1/2}$.
Thus, there exist non-superreflexive Banach spaces which do not contain the tripod
isometrically. This shows that renorming is essential in Theorems \ref{thm:superrefl}
and \ref{thm:bou_bau}(4).
\end{remark}

For the proof of Theorem~\ref{thm:superrefl}, we need a simple lemma.

\begin{lemma}\label{l:separate}
Suppose a Banach space $X$ is not superreflexive, and $c \in (0,1)$.
Then for every $n \in \N$ there exists a family $(x_i)_{i=1}^n$ in the
unit ball of $X$, such that
\begin{enumerate}
\item
If $(a_i)$ is a sequence of scalars, changing signs at most once, then\\
$\|\sum_{i=1}^n a_i x_i\| \geq 2 c \sum_{i=1}^n |a_i|$.
\item
For every $i$, $\dist(x_i, \span[x_j : j \neq i]) > c$.
\end{enumerate}
\end{lemma}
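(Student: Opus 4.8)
The plan is to obtain both statements from James's finite characterization of (non\nobreakdash-)superreflexivity. Since $X$ is not superreflexive, for every $\theta\in(0,1)$ and every $n\in\N$ there exist vectors $x_1,\dots,x_n$ in the unit ball of $X$ together with functionals $f_1,\dots,f_n$ in the unit ball of $X^{*}$ such that $f_i(x_j)=\theta$ whenever $i\leq j$ and $f_i(x_j)=0$ whenever $i>j$ (see \cite{Beau}). I would fix such a system for a value of $\theta$ close enough to $1$ (how close is dictated by $c$) and claim that the vectors $(x_i)_{i=1}^n$ satisfy both requirements.

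For part (2), put $f_{n+1}=0$ and, for $1\leq i\leq n$, set $g_i=f_i-f_{i+1}$. The triangularity relations give at once $g_i(x_i)=\theta$ and $g_i(x_j)=0$ for every $j\neq i$, while $\|g_i\|\leq 2$. Hence, for any $y\in\span[x_j:j\neq i]$ we get $\|x_i-y\|\geq |g_i(x_i-y)|/\|g_i\|=g_i(x_i)/\|g_i\|\geq\theta/2$, so $\dist(x_i,\span[x_j:j\neq i])\geq\theta/2$, which exceeds $c$ as soon as $\theta>2c$.

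For part (1), I would first use the hypothesis to split the index set: there is $m\in\{0,1,\dots,n\}$ such that, after replacing $(a_i)$ by $(-a_i)$ if necessary, $a_i\geq 0$ for $i\leq m$ and $a_i\leq 0$ for $i>m$. Writing $P=\sum_{i\leq m}|a_i|$ and $Q=\sum_{i>m}|a_i|$, the norm\nobreakdash-one functionals $f_k$ act on $v=\sum_i a_ix_i$ as suffix sums, $f_k(v)=\theta\sum_{i\geq k}a_i$; evaluating at $k=1$ and at $k=m+1$ yields $\|v\|\geq\theta|P-Q|$ and $\|v\|\geq\theta Q$. An elementary case check (the worst case being $P=2Q$) turns these into $\|v\|\geq\frac{\theta}{3}\sum_i|a_i|$, which is $\geq 2c\sum_i|a_i|$ once $\theta\geq 6c$. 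Choosing $\theta\in(6c,1)$ then makes (1) and (2) hold simultaneously.

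The only real difficulty is bookkeeping the numerical constants; conceptually everything reduces to James's characterization. Because no single functional from the James system separates $x_i$ from the remaining vectors (the $f_i$ are only upper\nobreakdash-triangular, not biorthogonal), one is forced into the consecutive differences $f_i-f_{i+1}$ in part~(2) and into the suffix\nobreakdash-sum estimate in part~(1), each losing a bounded factor in norm that must be reabsorbed by pushing $\theta$ toward $1$. Matching the precise constants $2c$ and $c$ is therefore the point to be careful about — and note that, since testing (1) on a single nonzero coefficient already forces $\|x_i\|\geq 2c$, the statement is in any case meaningful only for suitably small $c$ — while the rest of the argument is routine.
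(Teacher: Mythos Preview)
Your route via James's triangular system is genuinely different from the paper's and more self-contained, but it carries an unavoidable constant loss that leaves a real gap. The suffix-sum estimate $\|v\|\geq\theta\max(|P-Q|,Q)$ is sharp for the James system (the configuration $P=2Q$ is extremal, and passing to differences $f_i-f_j$ of norm at most $2$ does not help), so the constant in part~(1) is capped at $\theta/3<1/3$; likewise the $g_i=f_i-f_{i+1}$ device gives only $\dist(x_i,\span[x_j:j\neq i])\geq\theta/2<1/2$ in part~(2), and this is again sharp in general. Your argument therefore proves the lemma only for $c<1/6$, not for the full meaningful range $c<1/2$; and since the lemma is applied in the proof of Theorem~\ref{thm:superrefl} with both constants taken arbitrarily close to $1$, the ceilings $1/3$ and $1/2$ are not enough there.

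The paper avoids this loss by starting from a different characterization of non-superreflexivity, due to Schaffer and Sundaresan \cite{SS}: for any $\lambda<1$ and any $m$ there exist unit-ball vectors $y_1,\dots,y_m$ with $\|y_1+\cdots+y_k-y_{k+1}-\cdots-y_m\|>m-1+\lambda$ for every $k$. A one-line triangle-inequality argument then yields $\|\sum_j\alpha_j y_j\|\geq\lambda\sum_j|\alpha_j|$ for any sequence $(\alpha_j)$ with at most one sign change, with $\lambda$ as close to $1$ as one wishes. Part~(2) is obtained not from functionals at all, but by passing to a well-separated subsequence via a Ramsey-type result from \cite{HKPTZ}; this is precisely the step your $g_i$ trick replaces, trading the extra combinatorial machinery for the weaker constants.
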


\begin{proof}
Fix $\lambda \in (c,1)$. By a Ramsey-style result from \cite{HKPTZ},
there exists $m = m(n, c, \lambda) \in \N$ with the following property:
if $(y_i)_{i=1}^m$ is a subset of the unit ball of a Banach space $X$,
and $\|y_i - y_j\| \geq 2 \lambda$ whenever $i \neq j$, then there exist
$1 \leq s_1 < s_2 < \ldots < s_n \leq m$, such that
$dist(y_{s_i}, \span[y_{s_j} : j \in \{1, \ldots, n\} \backslash \{i\}]) > c$
for any $i \in \{1, \ldots, n\}$.

Now suppose $X$ is not superreflexive. By \cite{SS} (see also \cite[Part 4]{Beau}),
there exist $y_1, \ldots, y_m$ in the unit ball of $X$, such that, for every
$1 \leq k \leq m$, $\|y_1 + \ldots + y_k - y_{k+1} - \ldots - y_m\| > m + \lambda - 1$.
We claim that $\|\sum_{j=1}^m \alpha_j y_j\| \geq \lambda \sum_{j=1}^m |\alpha_j|$
if the sequence $(\alpha_j)$ changes sign at most once. Indeed, suppose
$\alpha_j \geq 0$ for $j \leq k$, and $\alpha_j \leq 0$ for $j \geq k+1$.
By scaling, we can assume $\sum_j |\alpha_j| = 1$. Then
$0 \leq \alpha_j \leq 1$ for $1 \leq j \leq k$, and
$0 \geq \alpha_j \geq -1$ for $k+1 \leq j \leq m$.
By the triangle inequality,
$$
\|\sum_{j=1}^m \alpha_j y_j\| \geq \|y_1 + \ldots + y_k - y_{k+1} - \ldots - y_m\| -
\sum_{j=1}^m (1 - |\alpha_j|) > (m + \lambda - 1) - m + 1 = \lambda .
$$
By our choice of $m$, we can find the vectors $x_k = y_{s_k}$ with the
required properties.
\end{proof}

\begin{proof}[Proof of Theorem~\ref{thm:superrefl}]
A weighted tree graph ${\mathcal{T}} = ({\mathcal{V}}, {\mathcal{E}})$
(${\mathcal{V}}$ and ${\mathcal{E}}$ denote the sets of vertices and edges,
respectively) gives rise to the metric tree $T$, as in Example~\ref{E:graph}.
Select $v_\emptyset \in {\mathcal{V}}$, and call it the {\it root}.
Enumerate the immediate descendants of $v_\emptyset$
(that is, the vertices connected to $v_\emptyset$ by edges)
by $v_1, \ldots, v_{n_\emptyset}$.
For $1 \leq i \leq n_\emptyset$, let $a_i = d(v_\emptyset, v_i)$.
For each $i$, enumerate its own immediate descendants
$v_{11}, \ldots, v_{1n_1}$, and set $a_{ij} = d(v_i, v_{ij})$.
Proceeding further in the same manner, we write ${\mathcal{V}}$ as
the collection of points $v_S$, for a finite collection ${\mathcal{S}}$
of finite strings $S$.
Then $v_{S^\prime}$ is a descendant of $v_S$ if and only if $S^\prime = S \smile j$,
for some $j \in [1, n_S]$. Set $a_{S^\prime} = d(v_S, v_{S^\prime})$,
where $S$ is the unique immediate predecessor of $S^\prime$.

For $S_1, S_2 \in {\mathcal{S}}$, write $S_1 \prec S_2$ if $v_{S_1}$ is a
predecessor of $v_{S_2}$, or equivalently, if
$S_2 = S_1 \smile j_1 \smile \ldots \smile j_k$.
For $S_\alpha = (i_{1\alpha} \ldots i_{k_\alpha \alpha})$
($\alpha \in \{1,2\}$), set $S_1 \wedge S_2 = (i_{11} \ldots i_{k_01})$,
where $k_0$ is the largest integer $k$ with the property that
$i_{k1} = i_{k2}$. If there is no such $k$, set $S_1 \wedge S_2 = \emptyset$.
Then $v_{S_1 \wedge S_2}$ is the largest common predecessor of $v_{S_1}$ and
$v_{S_2}$.

It is easy to see that the distance $d$ on the set ${\mathcal{V}}$
(inherited from the tree $T$) is given by the formula described below.
For $S_\alpha = (i_{1\alpha} \ldots i_{k_\alpha \alpha})$
($\alpha \in \{1,2\}$), let $k_0$ be the largest integer $k$ such that
$i_{k1} = i_{k2}$. Let $S = S_1 \wedge S_2 = (i_{11} \ldots i_{k_01})$.
Then
\begin{equation}
d(v_{S_1}, v_{S_2}) = d(v_{S_1}, v_S) + d(v_S, v_{S_2}) =
\sum_{m = k_0 + 1}^{k_1} a_{(i_{11} \ldots a_{m1})} +
\sum_{m = k_0 + 1}^{k_2} a_{(i_{12} \ldots a_{m2})} .
\label{eq:dist_vert}
\end{equation}

The main step is to renorm $X$ (making it into $Y$)
in such a way that there exists an isometry
$J_{\mathcal{V}} : {\mathcal{V}} \to Y$.
We then extend it to $J : T \to Y$ so that
$J|_{\mathcal{V}} = J_{\mathcal{V}}$.
For $t \in T \backslash \mathcal{E}$, there exist
unique $v_1, v_2 \in {\mathcal{V}}$ such that $t$ belongs to the
elementary segment $[v_1, v_2]$.
Let $\lambda = d(v_1, t)/d(v_1, v_2)$.
Define $J(t) = \lambda J_{\mathcal{V}}(v_1) +
(1-\lambda) J_{\mathcal{V}}(v_2)$.
Clearly, $J$ is an isometry on any elementary segment.
By the description of metric segments given in
\cite[Lemma 15.1]{Blum}, $J$ is an isometry on $T$.

To construct $J_{\mathcal{V}} : {\mathcal{V}} \to Y$,
denote the cardinality of ${\mathcal{V}}$ by $N$.
By Lemma~\ref{l:separate}, there exists, for every $M \in \N$, a family
$(x_{iM})_{i=1}^N \subset B(0;1)$ such that
\begin{equation}
\|\sum_{i=1}^N \alpha_i x_{iM}\| \geq (1 + 2^{-M})^{-1} \sum_{i=1}^N |\alpha_i|
\label{eq:change_sign}
\end{equation}
for any sequence $(\alpha_i)$ with at most one change of signs, and
$\|\sum_{i=1}^N \alpha_i x_{iM}\| \geq (1 + 2^{-M})^{-1} \max_i |\alpha_i|$
for any sequence $(\alpha_i)$.

Introduce the lexicographic order $\lex$ on ${\mathcal{S}}$ as follows:
if $S_1 \prec S_2$, then $S_1 \lex S_2$. Otherwise, let
$S = S_1 \wedge S_2$, and write
$S_\alpha = S \smile j_{1\alpha} \smile \ldots \smile j_{m_\alpha \alpha}$
($\alpha = 1, 2$). We say $S_1 \lex S_2$ if $j_{11} \leq j_{12}$.
Let $\phi : {\mathcal{S}} \to \{1, \ldots, N\}$ be the monotone increasing bijection
with respect to the lexicographic order. Define
$J_M : {\mathcal{V}} \to X$ by setting, for $S = (i_1, \ldots i_k)$,
$$
J_M(v_S) =
\sum_{j=1}^k d(v_{i_1 \ldots v_{j-1}}, v_{i_1 \ldots v_j})
x_{\phi(i_1 \ldots i_j) M} ,
$$
and $J_M(v_\emptyset) = 0$,
By \eqref{eq:dist_vert} and \eqref{eq:change_sign},
$$
d(v_{S_1}, v_{S_2}) \geq \|J_M(v_{S_1}) - J_M(v_{S_2})\| \geq
(1 + 2^{-M})^{-1} d(v_{S_1}, v_{S_2}) .
$$

Passing to a subsequence if necessary, we can assume that
$$\Phi(\alpha_1, \ldots, \alpha_N) = \lim_M
\|\sum_i \alpha_i x_{iM}\|$$ exists for every sequence
$(\alpha_i)_{i=1}^N$. For any such sequence,
$\max_i |\alpha_i| \leq \Phi(\alpha_1, \ldots, \alpha_N) \leq \sum_i |\alpha_i|$.
Thus, we can define a normed space $Z$ by setting
$\|\sum_i \alpha_i e_i\| = \Phi(\alpha_1, \ldots, \alpha_N)$, where
$(e_i)_{i=1}^N$ is the canonical basis for $\R^N$.
Denote the span of $(x_{iM})_{i=1}^N$ in $X$ by $Z_M$, and
define the map $U_M : Z_M \to Z : x_{iM} \mapsto e_i$.
Find $M$ so large that $c = \max\{\|U_M\|, \|U_M^{-1}\|\} < \sqrt{1 + \vr}$.
Consider $J = U_M \circ J_M : {\mathcal{V}} \to Z$.
As $\|\sum_i \alpha_i e_i\| = \sum_i |\alpha_i|$ if
the sequence $(\alpha_i)$ changes sign no more than once,
the map $J$ is an isometry.

To renorm $X$, embed $Z$ isometrically
into $\ell_\infty$. Then there exists $\tilde{U} : X \to \ell_\infty$,
with $\tilde{U}|_{Z_M} = U_M$, and $\|\tilde{U}\| \leq c$.
For $x \in X$, define $\|x\|_Y = \max\{c^{-1}\|x\|, \|\tilde{U} x\|\}$.
Clearly, $c^{-1}\|x\| \leq \|x\|_Y \leq c \|x\|$.
Moreover, for $x \in Z_M$, $\|\tilde{U} x\| = \|U_M x\| \geq c^{-1} \|x\|$,
hence $\|x\|_Y = \|U_M x\|$.
In other words, $Y$ contains $Z$ isometrically.

Consider $J = U_M \circ J_M : {\mathcal{V}} \to Z$.
As $\|\sum_i \alpha_i e_i\| = \sum_i |\alpha_i|$ if
the sequence $(\alpha_i)$ changes sign no more than once,
the map $J$ is an isometry. Therefore, the map $J_M : {\mathcal{V}} \to Y$
is an isometry.
\end{proof}


\section{Barycenters of trees}\label{bary}

There have been numerous attempts to find an appropriate ``non-linear''
notion of the barycenter of a set (or of a measure) in a metric space.
Several possible definitions are discussed in \cite{St}.
In this section, we approach this problem for metric trees, using
their injectivity. More precisely:
suppose $U$ is an isometric embedding of a metric tree $T$
into a Banach space $X$, equipped with the norm $\| \cdot \|$.
Suppose $x_1, \ldots, x_n \in T$, and let $\tlx = (x_1 + \ldots + x_n)/n$
be their barycenter in $X$ (we identify $x \in T$ with $U(x) \in X$). Let
$\pp = \pp_{U,T,X}$ be the set of contractive retractions $\pi$ from $X$ onto $U(T)$
(it is non-empty since $T$ is injective). We try to describe $\pp(\tlx)$. More generally, suppose
$\alpha = (\alpha_i)_{i=1}^n$ is a sequence of positive numbers, with
$\sum_{k=1}^n \alpha_k = 1$. Set $\tla = \sum_{k=1}^n \alpha_k x_k$,
and try to describe $\pp(\tla)$.

\begin{proposition}\label{prop:dist}
Suppose $T$ is a complete metric tree, embedded isometrically into a normed space $X$.
For $x_0 \in T$ and $\tilde{x} \in X$, the following are equivalent:
\begin{enumerate}
\item
$x_0 \in \pp(\tilde{x})$.
\item
For any $x \in T$, $d(x_0,x) \leq \|\tilde{x} - x\|$.
\end{enumerate}
If, in addition, $T$ is compact, then the two statements above are equivalent to:
\begin{enumerate}
\setcounter{enumi}{2}
\item
For any leaf (final point) $y \in T$, $d(x_0,y) \leq \|\tilde{x} - y\|$.
\end{enumerate}
\end{proposition}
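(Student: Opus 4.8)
The plan is to establish the cycle $(1)\Rightarrow(2)\Rightarrow(1)$ in the normed-space setting, and then, under the extra compactness hypothesis, to close the loop through $(3)$ via $(2)\Rightarrow(3)\Rightarrow(2)$.

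The implication $(1)\Rightarrow(2)$ is immediate: if $\pi\in\pp$ satisfies $\pi(\tilde x)=x_0$, then $\pi$ fixes $T$ pointwise and is $1$-Lipschitz, so for every $x\in T$ we get $d(x_0,x)=d(\pi(\tilde x),\pi(x))\leq\|\tilde x-x\|$. The implication $(2)\Rightarrow(3)$ is trivial, since leaves belong to $T$.

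The substantive step is $(2)\Rightarrow(1)$. Here I would put the metric $\|\cdot-\cdot\|$ on the subset $C=T\cup\{\tilde x\}$ of $X$ and define $f:C\to T$ by $f|_T=\id_T$ and $f(\tilde x)=x_0$. Because the embedding of $T$ into $X$ is isometric, $f$ restricted to $T$ is an isometry, and condition $(2)$ says exactly that $d(f(\tilde x),f(x))=d(x_0,x)\leq\|\tilde x-x\|$ for all $x\in T$; hence $f$ is nonexpansive on $C$. Now, since $T$ is a complete metric tree, Theorem~\ref{T:cmt is hc} makes it hyperconvex, so by Theorem~\ref{T:neehc} it is a $1$-absolute Lipschitz retract. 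Applying that theorem with the ambient space $X$ in the role of $D$ and $C\subset X$, the map $f$ extends to a nonexpansive $\tilde f:X\to T$; since $\tilde f|_T=\id_T$ and $\tilde f(\tilde x)=x_0$, we conclude $x_0\in\pp(\tilde x)$. This is the only place where the geometry of trees (their injectivity) is really used, and I expect it to be the main obstacle, in the sense of being the only non-formal ingredient.

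Finally, for $(3)\Rightarrow(2)$ under compactness, fix $x\in T$. By the compactness characterization recalled in Section~\ref{intro}, applied with base point $a=x_0$, we have $T=\bigcup_{y\in F}[x_0,y]$ where $F$ is the set of leaves; hence $x\in[x_0,y]$ for some $y\in F$, so $x_0xy$ holds, i.e. $d(x_0,y)=d(x_0,x)+d(x,y)$. Combining this with $(3)$, the triangle inequality in $X$, and the isometry of the embedding (so $\|x-y\|=d(x,y)$), one gets
\[
d(x_0,x)+d(x,y)=d(x_0,y)\leq\|\tilde x-y\|\leq\|\tilde x-x\|+\|x-y\|=\|\tilde x-x\|+d(x,y),
\]
and cancelling $d(x,y)$ yields $d(x_0,x)\leq\|\tilde x-x\|$, which is $(2)$. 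Apart from this cancellation, the only input is the structural fact that a compact tree is covered by the geodesics from any fixed point to its leaves.
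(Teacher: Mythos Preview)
Your proof is correct and follows essentially the same approach as the paper: injectivity of the complete metric tree (via hyperconvexity and the absolute Lipschitz retract property) gives $(1)\Leftrightarrow(2)$, and the compactness description $T=\bigcup_{y\in F}[x_0,y]$ together with the triangle inequality gives $(3)\Rightarrow(2)$. The paper's write-up is terser, but the ingredients and the logical structure are identical.
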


In the proofs below, we sometimes identify $T$ with its image in the
ambient Banach space, and $d(\cdot, \cdot)$ with $\|\cdot - \cdot\|$.

\begin{proof}
By the injectivity of $T$, (1) holds if and only if there exists a contractive map
$$\pi : T \cup \{\tilde{x}\} \to T\,\,\,\mbox{such that}\,\,\, \pi|_T = I_T,\,\,\,\mbox{and}\,\,\, \pi(\tilde{x}) = x_0.$$
This, in turn, is equivalent to (2). Clearly, (2) implies (3).
To show that, for a compact $T$, the converse is true, recall the
``Krein-Milman Theorem for metric trees'' (Statement (4) in
Section 1, proved in \cite{AkBo}), which asserts that 
$T = \displaystyle \bigcup_{y \in {\mathcal{L}}} [x_0, y]$, where ${\mathcal{L}}$ is the set of
leaves of $T$). 
For $x \in T$, find $y \in {\mathcal{L}}$ such that $x \in [x_0, y]$.
If $\|x_0 - y\| \leq \|\tilde{x} - y\|$, then
$$
\|x_0 - x\| = \|x_0 - y\| - \|x - y\| \leq \|\tilde{x} - y\| - \|x - y\|
\leq \|\tilde{x} - x\| ,
$$
thus $(3)$ implies $(2)$.
\end{proof}

\begin{corollary}\label{cor:dist_tree}
If then $x_0 \in \pp(\tla)$, then
$d(x_0,x) \leq \sum_k \alpha_k d(x_k,x)$ for any $x \in T$.
\end{corollary}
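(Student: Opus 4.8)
The plan is to read the inequality straight off the characterization in Proposition~\ref{prop:dist}. Applying that proposition with $\tilde{x} = \tla$, the hypothesis $x_0 \in \pp(\tla)$ is equivalent to the statement that $d(x_0,x) \leq \|\tla - x\|$ for every $x \in T$. So it suffices to estimate $\|\tla - x\|$ from above by $\sum_k \alpha_k d(x_k,x)$.

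For this I would use that $\sum_{k=1}^n \alpha_k = 1$: identifying $x \in T$ with its image $U(x)$ in the ambient Banach space $X$, we have $x = \sum_{k=1}^n \alpha_k x$, and hence
$$
\tla - x = \sum_{k=1}^n \alpha_k x_k - x = \sum_{k=1}^n \alpha_k (x_k - x).
$$
Since each $\alpha_k > 0$, the triangle inequality in $X$ together with the fact that $U$ is an isometry gives
$$
\|\tla - x\| \leq \sum_{k=1}^n \alpha_k \|x_k - x\| = \sum_{k=1}^n \alpha_k\, d(x_k,x).
$$
Chaining this with $d(x_0,x) \leq \|\tla - x\|$ yields $d(x_0,x) \leq \sum_k \alpha_k d(x_k,x)$ for all $x \in T$, which is the assertion.

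There is essentially no obstacle: all the work is in Proposition~\ref{prop:dist}, and the corollary is just the observation that the norm of $X$ is convex along the affine combination defining the barycenter $\tla$. In particular, no compactness of $T$ is needed, and the estimate is uniform over the choice of contractive retraction $\pi \in \pp$ with $\pi(\tla) = x_0$. (If one wishes, one can also note the symmetric consequence that $\sup_k d(x_0, x_k) \leq \diam\{x_1,\dots,x_n\}$ type bounds follow the same way, but the stated inequality is exactly what the triangle inequality delivers.)
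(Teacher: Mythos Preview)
Your proof is correct and matches the paper's argument essentially line for line: invoke Proposition~\ref{prop:dist}(2) to get $d(x_0,x)\leq\|\tla-x\|$, rewrite $\tla-x=\sum_k\alpha_k(x_k-x)$ using $\sum_k\alpha_k=1$, and apply the triangle inequality. The only difference is that you spell out the intermediate steps in slightly more detail than the paper does.
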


\begin{proof}
By the Proposition~\ref{prop:dist}(2)
$$
\|x_0 - x\| \leq \big\| \sum_k \alpha_k x_k - x \big\| =
\big\| \sum_k \alpha_k (x_k - x) \big\| \leq \sum_k \alpha_k \|x_k - x\|
$$
for any $x \in T$, whenever $\pi(\tla) = x_0$.
\end{proof}

In certain cases, the converse to this corollary is also true:
this is shown by the following two theorems. However, in general,
the converse implication does not hold (Example~\ref{l1_only}).

\begin{theorem}\label{thm:l_infty}
Suppose $T$ is a complete metric tree, embedded into $\ell_\infty(T)$
in the canonical way. For $x_0 \in T$, the following are equivalent:
\begin{enumerate}
\item
$x_0 \in \pp(\tla)$.
\item
$d(x_0,x) \leq \sum_k \alpha_k d(x_k,x)$ for any $x \in T$.
\end{enumerate}
\end{theorem}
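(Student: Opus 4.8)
The plan is to obtain the two implications almost for free from the results already established. Write $J=J_a$ for the canonical embedding of $T$ into $\ell_\infty(T)$ with some fixed base point $a\in T$, so that $J(z)(m)=d(z,m)-d(a,m)$ for $z,m\in T$, and recall that $z\in T$ is identified with $J(z)\in\ell_\infty(T)$. By Proposition~\ref{prop:dist}, assertion (1) is equivalent to the requirement that $d(x_0,x)\le\|\tla-x\|_\infty$ for every $x\in T$, where $\|\cdot\|_\infty$ is the norm of $\ell_\infty(T)$. The implication (1)$\Rightarrow$(2) needs no new argument: it is precisely Corollary~\ref{cor:dist_tree}, which in fact holds for an isometric embedding into an arbitrary normed space.

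For the converse, (2)$\Rightarrow$(1), the observation to exploit is that a single coordinate of the vector $\tla-x\in\ell_\infty(T)$ already reproduces the weighted average occurring in (2). Concretely, evaluating at the coordinate $m=x$ and using $\sum_k\alpha_k=1$,
$$
(\tla - x)(x) = \sum_k \alpha_k \bigl( d(x_k,x) - d(a,x) \bigr) - \bigl( d(x,x) - d(a,x) \bigr) = \sum_k \alpha_k d(x_k,x) .
$$
Hence $\|\tla-x\|_\infty \ge \sum_k \alpha_k d(x_k,x)$ for every $x\in T$, and assumption (2) then yields $d(x_0,x)\le\|\tla-x\|_\infty$ for every $x\in T$. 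By Proposition~\ref{prop:dist}(2) this is exactly the statement that $x_0\in\pp(\tla)$, which finishes the argument.

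I do not expect a genuine obstacle here; the content of the theorem is really the identification above. The point is that the canonical target $\ell_\infty(T)$ is ``large enough'' that testing the norm $\|\tla-x\|_\infty$ against the one coordinate $m=x$ already witnesses the lower bound $\sum_k\alpha_k d(x_k,x)$, which is exactly the reverse of the triangle-inequality estimate used to prove Corollary~\ref{cor:dist_tree}. If anything needs care, it is only the bookkeeping showing that the $d(a,\cdot)$ terms cancel so that the coordinate value is independent of the base point $a$ (as it must be, since the conclusion does not mention $a$). This special feature of the embedding is genuinely necessary: as noted before the theorem, Example~\ref{l1_only} exhibits an isometric embedding for which the converse to Corollary~\ref{cor:dist_tree} fails.
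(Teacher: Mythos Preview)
Your proof is correct and follows essentially the same approach as the paper: both derive (1)$\Rightarrow$(2) from Corollary~\ref{cor:dist_tree}, and for (2)$\Rightarrow$(1) both reduce via Proposition~\ref{prop:dist} to the inequality $d(x_0,x)\le\|\tla-x\|_\infty$, which is established by evaluating the function $\tla-x$ at the single coordinate $x$ to produce $\sum_k\alpha_k d(x_k,x)$. The presentations differ only in notation.
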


\begin{proof}
The implication (1) $\Rightarrow$ (2) follows from
Corollary~\ref{cor:dist_tree}. We establish the converse.
Recall that the canonical embedding takes $x \in T$ to
$h(x) \in \ell_\infty(T)$, where $h(x)(y) = d(x,y) - d(x^*,y)$.
Suppose $d(x_0,x) \leq \sum_k \alpha_k d(x_k,x)$ for any $x \in T$.
By Proposition ~\ref{prop:dist}, we have to show that
$d(x_0,x) \leq \|\tla - h(x)\|$ for any $x \in T$.
We identify $\tla$ with the function $\phi : T \to \R$,
defined by
$$
\phi(y) = \sum_k \alpha_k h(x_k)(y) = \sum_k \alpha_k \|x_k - y\| - \|x^* - y\| .
$$
Then
$$
\eqalign{
\|\tla - h(x)\|
&
=
\sup_{y \in T} |\phi(y) - h(x)(y)| =
\sup_{y \in T} |\sum_k \alpha_k (\|x_k - y\| - \|x - y\|)|
\cr
&
\geq
|\sum_k \alpha_k (\|x_k - x\| - \|x - x\|)| = \sum_k \alpha_k \|x_k - x\| \geq
\|x_0 - x\| .
}
$$
\end{proof}

\begin{theorem}\label{thm:l_1}
Suppose $T$ is a compact metric tree, embedded into
$L_1(\mu_T)$ 
in the semicanonical way. For $x_0 \in T$, the following are equivalent:
\begin{enumerate}
\item
$x_0 \in \pp(\tla)$.
\item
$d(x_0,x) \leq \sum_k \alpha_k \|x_k - x\|$ for any $x \in T$.
\end{enumerate}
\end{theorem}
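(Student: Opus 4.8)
The plan is to mimic the proof of Theorem~\ref{thm:l_infty}, exploiting Proposition~\ref{prop:dist}. The implication (1)$\Rightarrow$(2) is immediate from Corollary~\ref{cor:dist_tree}, so the work is in (2)$\Rightarrow$(1). Since $T$ is compact, Proposition~\ref{prop:dist} tells us it suffices to check that $d(x_0,y) \leq \|\tla - U(y)\|_{L_1(\mu_T)}$ for every leaf $y$ of $T$, where $U(z) = \chi_{[x_0,z]}$ is the semicanonical embedding based at $x_0$. (Using $x_0$ itself as the base point of the embedding is the natural choice and simplifies the bookkeeping; the isometry class of the embedding does not depend on this choice.) So the target inequality becomes
\[
d(x_0,y) \leq \Bigl\| \sum_k \alpha_k \chi_{[x_0,x_k]} - \chi_{[x_0,y]} \Bigr\|_{L_1(\mu_T)} .
\]

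The key geometric observation is that all the relevant segments emanate from the common point $x_0$, so the tree structure lets us understand where the functions $\chi_{[x_0,x_k]}$ and $\chi_{[x_0,y]}$ overlap. For each $k$, let $w_k \in T$ be the point (guaranteed by the three-point property) with $[x_0,x_k] \cap [x_0,y] = [x_0,w_k]$; thus $w_k$ is where the geodesic to $x_k$ ``branches off'' from the geodesic to $y$. On the sub-path $[x_0,y]$, parametrized by arclength $t \in [0, d(x_0,y)]$ via $\varphi_{x_0 y}$, the function $\chi_{[x_0,y]}$ equals $1$ everywhere, while $\sum_k \alpha_k \chi_{[x_0,x_k]}$ at parameter $t$ equals $\sum\{\alpha_k : d(x_0,w_k) \geq t\}$, a nonincreasing step function of $t$ starting at $1$ (at $t=0$) and dropping as the various branch points $w_k$ are passed. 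Hence the contribution to the $L_1$ norm coming from integrating over $[x_0,y]$ alone is
\[
\int_0^{d(x_0,y)} \Bigl( 1 - \sum_{k : d(x_0,w_k) \geq t} \alpha_k \Bigr)\, dt
= \sum_k \alpha_k \bigl( d(x_0,y) - d(x_0, w_k \wedge y) \bigr),
\]
where I abbreviate $d(x_0, w_k) $ capped at $d(x_0,y)$; since $w_k \in [x_0,y]$ already this is just $\sum_k \alpha_k (d(x_0,y) - d(x_0,w_k)) = \sum_k \alpha_k\, d(w_k, y)$. Discarding the (nonnegative) contributions of the parts of the $\chi_{[x_0,x_k]}$ lying off the path $[x_0,y]$, we get
\[
\Bigl\| \sum_k \alpha_k \chi_{[x_0,x_k]} - \chi_{[x_0,y]} \Bigr\|_{L_1}
\geq \sum_k \alpha_k\, d(w_k,y).
\]

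Finally, I must close the gap between $\sum_k \alpha_k d(w_k,y)$ and the quantity $d(x_0,y)$ we want to bound below, using hypothesis (2). Apply (2) with the test point $x = y$: $d(x_0,y) \leq \sum_k \alpha_k d(x_k, y)$. Now for each $k$, since $w_k$ lies on both $[x_0,x_k]$ and $[x_0,y]$ and is the last common point, the three-point property gives $d(x_k,y) = d(x_k,w_k) + d(w_k,y)$. So the desired inequality $d(x_0,y) \leq \sum_k \alpha_k d(w_k,y)$ would need $\sum_k \alpha_k d(x_k,w_k)$ to be absorbed; this is not automatic, so I expect the real work — the main obstacle — is to sharpen the lower bound on the $L_1$ norm by also accounting for the mass of $\chi_{[x_0,x_k]}$ that sits on $[w_k, x_k]$, i.e.\ the ``off-path'' part I discarded above. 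That discarded part contributes, for each $k$, at least $\alpha_k d(x_k,w_k)$ minus whatever cancellation occurs against the other segments $[x_0,x_j]$ on the overlap $[w_k, x_k] \cap [x_0,x_j]$; a careful accounting (organizing the segments by a common ``branching'' decomposition of $\con\{x_0, x_1,\dots,x_n, y\}$, which is a finite subtree) should show the full norm is at least $\sum_k \alpha_k d(x_k,y)$ minus a correction that still dominates $d(x_0,y)$, or more cleanly, exactly recovers $\sum_k \alpha_k \|x_k - y\| \geq d(x_0,y)$ by hypothesis. I would carry this out by induction on the number of distinct branch points, or by directly computing $\|\sum_k \alpha_k \chi_{[x_0,x_k]} - \chi_{[x_0,y]}\|_{L_1}$ as a sum over the edges of the finite subtree $\con\{x_0,\dots,x_n,y\}$, on each edge the integrand being $|c_e - \mathbf{1}_{e \subset [x_0,y]}|$ times the edge length with $c_e = \sum\{\alpha_k : e \subset [x_0,x_k]\}$, and then comparing term by term against the analogous edge-expansion of $\sum_k \alpha_k \|x_k-y\|$.
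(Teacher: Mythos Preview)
Your setup matches the paper's exactly: reduce (2)$\Rightarrow$(1) via Proposition~\ref{prop:dist}(3) to leaves $y$, use the semicanonical embedding based at $x_0$, and introduce the branch points $w_k$ with $[x_0,x_k]\cap[x_0,y]=[x_0,w_k]$. The gap is that you talk yourself out of the one-line finish. After correctly noting that the off-path contributions are nonnegative, you discard them, find the on-path bound $\sum_k\alpha_k\,d(w_k,y)$ insufficient, and then propose an induction over the edges of a finite subtree to handle imagined ``cancellation against the other segments''. There is no such cancellation: off $[x_0,y]$ every summand $\alpha_k\chi_{[x_0,x_k]}$ is nonnegative and $\chi_{[x_0,y]}$ vanishes, so the absolute value is just the sum. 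Concretely, for $z\notin[x_0,y]$ one has $\chi_{[x_0,x_k]}(z)=\chi_{(w_k,x_k]}(z)$, hence the off-path integral equals $\sum_k\alpha_k\,d(w_k,x_k)$ on the nose.

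Adding this to your on-path computation gives the \emph{exact} equality
\[
\Bigl\|\sum_k\alpha_k\chi_{[x_0,x_k]}-\chi_{[x_0,y]}\Bigr\|_{L_1}
=\sum_k\alpha_k\bigl(d(w_k,y)+d(w_k,x_k)\bigr)
=\sum_k\alpha_k\,d(x_k,y),
\]
since $w_k\in[x_k,y]$. Now hypothesis (2) with $x=y$ finishes immediately. The paper phrases the same observation as: for every $z\in T$ the sign of $\chi_{[x_0,x_k]}(z)-\chi_{[x_0,y]}(z)$ depends only on whether $z\in[x_0,y]$, not on $k$; hence $\|\sum_k\alpha_k(x_k-y)\|=\sum_k\alpha_k\|x_k-y\|$. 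Your proposed edge-by-edge induction would eventually rediscover this, but it is unnecessary and you did not actually carry it out.
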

\begin{proof}
As in Theorem~\ref{thm:l_infty}, we only need to establish (2) $\Rightarrow$ (1).
Suppose $x$ is a leaf of the tree $T$. By Proposition~\ref{prop:dist},
we have to show that, if $x_0 \in T$ is such that
$d(x_0,x) \leq \sum_k \alpha_k d(x_k,x)$, then 
$\|x_0 - x\| \leq \|\tla - x\|$. 
The semicanonical embedding of $T$ into $L_1$ identifies
$t \in T$ with $\chi_{[x_0,t]}$ (by translation, we can identify $x_0$ with $0$).
For each $k$, find $u_k \in T$ satisfying $[x_0,x_k] \cap [x_0,x] = [x_0,u_k]$.
Then $y_k = x_k - u_k = \chi_{[x_k,u_k]}$ and $z_k = x - u_k = \chi_{[x,u_k]}$
have disjoint support, hence 
\begin{equation}
\|x_k - x\| = \|y_k - z_k\| = \|y_k\| + \|z_k\|
\label{sum}
\end{equation}
Furthermore, $\|x_0 - x\| = \|u_k\| + \|z_k\|$.
Thus,
$$
\sum_k \alpha_k \|x_k - x\| = \sum_k \alpha_k (\|y_k\| + \|z_k\|) \geq
\sum_k \alpha_k \|x_0 - x\| = \sum_k \alpha_k (\|u_k\| + \|z_k\|) ,
$$
which is equivalent to
\begin{equation}
\sum_k \alpha_k \|y_k\| \geq \sum_k \alpha_k \|u_k\| .
\label{u_y}
\end{equation}
We have to show that
$$
\|\tla - x\| = \|\sum_k \alpha_k (x_k - x)\| \geq
\sum_k \alpha_k (\|u_k\| + \|z_k\|) = \|x\| .
$$
In view of (\ref{u_y}) and (\ref{sum}), it is enough to prove
that, for any leaf $x \in T$,
$\|\sum_k \alpha_k (x_k - x)\| = \sum_k \alpha_k \|x_k - x\|$.
Thus, it suffices to establish that, at any point $y \in T$,
the signs of $(x_k - x)(y) = \chi_{[x_0,x_k]}(y) - \chi_{[x_0,x]}(y)$
are independent of $k$. If $y \notin [x_0,x]$,
then $\chi_{[x_0,x_k]}(y) - \chi_{[x_0,x]}(y) \geq 0$ for any $k$.
On the other hand, if $y \notin [x_0,x]$, then
$\chi_{[x_0,x_k]}(y) - \chi_{[x_0,x]}(y) \leq 0$ for any $k$.
\end{proof}
\begin{remark}
The sets
$\{x_0 \in T : d(x_0,x) \leq \sum_k \alpha_k d(x_k,x) \, \forall \, x \in T\}$
were briefly discussed in
Remark 7.2(iii) of \cite{St}. Namely, consider the probability
measure $q = \sum_k \alpha_k \delta_{x_k}$. The set of points
described above was denoted by $C^*(q)$.
\end{remark}

As shown by the following example, this set need not be contained
in the metric or linear convex hull of $x_1, \ldots, x_n$
(see Definition~\ref{D:metr_conv} for the definition of metric convexity).

\begin{example}\label{big_set}
As an example, consider the points $x_i = (i,1)$ ($1 \leq i \leq 3$)
in a spider with four legs (defined in Example~\ref{E:tripod}).
If $T$ is embedded into $\ell_\infty(T)$ (respectively $L_1$)
in the canonical (respectively ~semicanonical)
way, then $\pp(\tlx)$ consists of $o$, as well as of
all $(j,t)$ with $1 \leq j \leq 4$ and $t \leq 1/3$.
In particular, $(4,1/3)$ or rather, its canonical or semicanonical image
belongs to neither the metric nor linear convex hull of $\{x_1, x_2, x_3\}$.
\end{example}

Certain information about $\pp(\tlx)$ may be extracted from the
following results.

\begin{proposition}\label{prop:convex}
Suppose a complete metric tree $T$ is embedded isometrically into a normed space $X$,
and $\tilde{x}$ is a point of $X$. Then $\pp(\tilde{x})$ is a closed, metrically convex
subset of $T$.
\end{proposition}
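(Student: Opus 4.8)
The plan is to use the characterization of $\pp(\tilde{x})$ provided by Proposition~\ref{prop:dist}(2), namely that $x_0 \in \pp(\tilde{x})$ if and only if $d(x_0,x) \leq \|\tilde{x}-x\|$ for every $x \in T$. Closedness is then immediate: the set $\pp(\tilde{x}) = \bigcap_{x \in T}\{x_0 \in T : d(x_0,x) \leq \|\tilde{x}-x\|\}$ is an intersection of closed sets, since for each fixed $x$ the function $x_0 \mapsto d(x_0,x) - \|\tilde{x}-x\|$ is continuous on $T$.

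For metric convexity, I would take $x_0, x_1 \in \pp(\tilde{x})$ and an arbitrary point $w$ on the metric segment $[x_0,x_1]$, and show $d(w,x) \leq \|\tilde{x}-x\|$ for every $x \in T$. Fix such an $x$. The key structural tool is the three-point property (Statement (2) in Section~\ref{intro}) applied to $x_0$, $x_1$, and $x$: there is a unique median point $p$ with $[x_0,x] \cap [x_1,x] = [p,x]$, and $[x_0,x_1] = [x_0,p]\cup[p,x_1]$. Since $w \in [x_0,x_1]$, either $w \in [x_0,p]$ or $w \in [p,x_1]$; by symmetry assume $w \in [x_0,p]$. Then $w$ lies between $x_0$ and $p$, and $p$ lies between $w$ and $x$ (as $p \in [x_0,x]$ and $w\in[x_0,p]$ forces $x_0 w p$, hence with $x_0 p x$ and transitivity of betweenness one gets $w p x$). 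Therefore $d(w,x) = d(w,p) + d(p,x)$, while $d(x_0,x) = d(x_0,w) + d(w,p) + d(p,x)$, so $d(w,x) = d(x_0,x) - d(x_0,w) \leq d(x_0,x)$. Combining with the hypothesis $d(x_0,x) \leq \|\tilde{x}-x\|$ gives $d(w,x) \leq \|\tilde{x}-x\|$, as desired. (In the case $w \in [p,x_1]$ one argues identically with $x_1$ in place of $x_0$.)

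I expect the main (minor) obstacle to be the careful bookkeeping of betweenness relations — making sure that $w \in [x_0,p] \subset [x_0,x]$ really does give $d(w,x) = d(w,p)+d(p,x)$ and that $w$ lies between $x_0$ and $x$ on the unique arc. All of this follows cleanly from the transitivity of betweenness and the three-point property already recorded in the preliminaries, so no new machinery is needed; the only subtlety is organizing the two symmetric cases and invoking Proposition~\ref{prop:dist} to translate back and forth between the retraction condition and the distance inequality.
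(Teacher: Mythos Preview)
Your proof is correct and follows essentially the same approach as the paper: both invoke Proposition~\ref{prop:dist}(2) to reduce to the distance characterization, obtain closedness as an intersection of closed sets, and for convexity establish that $d(w,x)\leq\max\{d(x_0,x),d(x_1,x)\}$ for $w\in[x_0,x_1]$. The only difference is that the paper cites this last inequality from \cite{St}, whereas you prove it directly from the three-point property and transitivity of betweenness, which makes your argument more self-contained.
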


\begin{proof}
Proposition~\ref{prop:dist} implies that $x_0 \in \pp(\tilde{x})$ if and only if
$d(x,x_0) \leq \|x - \tilde{x}\|$ for any $x \in T$. This implies that
$\pp(\tilde{x})$ is closed. Furthermore, suppose $x_1, x_2, \in \pp(\tilde{x})$,
and $x_0 \in [x_1,x_2]$. Then, by Section 2 of \cite{St},
$$
d(x,x_0) \leq \max\{d(x,x_1) , d(x,x_2)\} \leq \|x - \tilde{x}\|
$$
for any $x \in T$, which implies $x \in \pp(\tilde{x})$.
\end{proof}

In certain cases, when the structure of
$x_1, \ldots, x_n$ in $T$ is simple, we can
describe $\pp(\tlx)$ explicitly. For instance, if $n=2$, then $\pp(\tlx) = \{x_0\}$,
where $x_0 \in [x_1,x_2]$ satisfies $d(x_1,x_0) = d(x_1,x_2)/2$ (equivalently,
$d(x_2,x_0) = d(x_1,x_2)/2$). Indeed,
$$
d(x_0,x_1) = \|(x_1 + x_2)/2 - x_1\| = d(x_1,x_2)/2 ,
$$
and similarly, $d(x_2,x_0) = d(x_1,x_2)/2$. If $x_0 \notin [x_1,x_2]$,
then there exists $y \in [x_1,x_2]$ such that $[x_0,x_s] = [x_0,y] \cup [y,x_s]$
for $s=1,2$. Then $d(x_0,x_1) + d(x_0, x_2) > d(x_1, x_2)$, which contradicts
the contractiveness of the map taking $\tlx$ to $x_0$. Thus, $x_0$ is the unique
point of $[x_1,x_2]$ satisfying $d(x_1,x_0) = d(x_1,x_2)/2$.

In a more complex situation, consider the ``tripod'' $T$,
with limbs of length $1$ (described in Example~\ref{E:tripod}).
For $0 \leq \alpha \leq \beta \leq 1$, we define
$(i,[\alpha,\beta]) = \{(i,t) : \alpha \leq t \leq \beta\}$.

\begin{theorem}\label{thm:tripod}
Consider the points $x_i = (i,1)$ ($i = 1,2,3$) in the tripod $T$
described above.
Suppose $S$ is a subset of $T$. Then there exists an embedding of $T$ into a
Banach space $X$ such that $S = \pp(\tlx)$ if and only if there
exist $i_0 \in \{1,2,3\}$ and $0 \leq \alpha \leq \beta \leq 1/3$, such that
either (i) $S = (i_0,[\alpha,\beta])$, or
(ii) $S = (i_0,[0,\beta]) \cup \big(\cup_{i \neq i_0} (i,[0,\alpha])\big)$.
\end{theorem}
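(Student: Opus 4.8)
The plan is to characterize $\pp(\tlx)$ via Proposition~\ref{prop:dist}: since the tripod $T$ is compact, $x_0 \in \pp(\tlx)$ if and only if $d(x_0,y) \leq \|\tlx - y\|$ for every leaf $y$, and the only leaves are $x_1, x_2, x_3$. So for a \emph{fixed} embedding $U$ of $T$, the set $\pp(\tlx)$ is precisely the set of $x_0 \in T$ satisfying the three inequalities $d(x_0,x_i) \leq \|U\tlx - x_i\|$ for $i = 1,2,3$. The first task is therefore to understand the role of the three numbers $r_i := \|U\tlx - x_i\|$. By the triangle inequality in $X$ and isometry, $r_i \leq \sum_k \alpha_k d(x_k,x_i)$ where $\alpha_k = 1/3$; in our case $d(x_i,x_j) = 2$ for $i \neq j$, so $r_i \leq (0 + 2 + 2)/3 = 4/3$. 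Also $r_i \geq |\,\|U\tlx\| - d(x_i, o)\,| $-type bounds, but more useful is that $r_i + r_j \geq \|x_i - x_j\| = 2$ for all $i \neq j$ (since $U\tlx$ lies in a normed space), and similarly $r_i \geq \frac13 \cdot 2 + \frac13 \cdot 2 - \frac13 \cdot 0$ fails — instead we get $r_i \geq d(x_i, o) - \|U\tlx - o\| \geq \cdots$; I will pin down exactly which triples $(r_1,r_2,r_3)$ are achievable by \emph{some} embedding, this being the crux.

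Next I would solve the forward (``only if'') direction: given an embedding, describe $\{x_0 : d(x_0,x_i) \leq r_i \ \forall i\}$ explicitly as a subset of the tripod. Parametrize $x_0$ either as $o$ or as $(i_0, t)$ with $0 < t \leq 1$. The constraint $d(x_0,x_{i_0}) \leq r_{i_0}$ reads $1 - t \leq r_{i_0}$, i.e.\ $t \geq 1 - r_{i_0}$; the constraints for $j \neq i_0$ read $1 + t \leq r_j$, i.e.\ $t \leq r_j - 1$. Combining over the two $j \neq i_0$ gives $t \leq \min_{j \neq i_0}(r_j - 1)$. Using $r_j \leq 4/3$ we get $t \leq 1/3$, which already forces $\pp(\tlx)$ to sit within distance $1/3$ of $o$. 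A short case analysis on which of the $r_i$ are $< 1$ versus $\geq 1$ (at most one $r_i$ can be $<1$, because $r_i + r_j \geq 2$) shows the solution set has exactly one of the two advertised shapes: if some $r_{i_0} < 1$, only legs $\neq i_0$ are excluded near $o$ and we get shape~(i) on leg $i_0$; if all $r_i \geq 1$ then $o$ itself is a solution and each leg $i$ contributes the segment $(i,[0, \min_{j\neq i}(r_j - 1)])$, which after checking the arithmetic collapses to shape~(ii) with $\beta = \min_{j \neq i_0}(r_j-1)$ for the ``largest'' leg and $\alpha$ the common smaller value. I will need to verify that the interval endpoints genuinely line up as $(\alpha,\beta)$ with $\alpha \leq \beta \leq 1/3$ rather than three unrelated numbers — this follows because among $r_1, r_2, r_3$ at most one is the strict minimum, so $\min_{j \neq i}(r_j - 1)$ takes only two values as $i$ ranges.

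For the converse (``if''), given $i_0$ and $0 \leq \alpha \leq \beta \leq 1/3$ of either prescribed form, I must produce a concrete embedding realizing that $S$. The natural move is to engineer the three numbers $r_i$ so that the solution set computed above equals $S$: for shape~(i) take $r_{i_0} = 1 - \beta$ forced, but we need $t \geq 1 - r_{i_0}$ to give $t \geq \beta$ — wait, that gives a lower bound, so actually shape~(i) $S = (i_0,[\alpha,\beta])$ needs \emph{both} $t \geq \alpha$ and $t \leq \beta$; the lower cut $t \geq \alpha$ comes from $r_{i_0} = 1 - \alpha$ and the upper cut $t \leq \beta$ from $\min_{j \neq i_0} r_j = 1 + \beta$. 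So I want an embedding with $\|U\tlx - x_{i_0}\| = 1-\alpha$ and $\|U\tlx - x_j\| = 1+\beta$ for $j \neq i_0$; for shape~(ii), $r_{i_0} = 1 + \beta$ and $r_j = 1+\alpha$ for $j \neq i_0$. One should check these target values satisfy the necessary constraints ($r_i + r_j \geq 2$ etc.) — they do, as $\alpha,\beta \geq 0$. To build the embedding, embed $T$ into some large $\ell_\infty(\Gamma)$ (every metric tree embeds canonically, and $\ell_\infty$ is injective so it suffices to realize the barycenter's distances), choosing coordinates: one coordinate recording the standard tripod structure to keep $U$ isometric, plus auxiliary coordinates of the form $f(x) = $ a $1$-Lipschitz function of $x$ chosen so that $\sum_k \frac13 f(x_k)$ is pushed far from or close to $f(x_i)$ in exactly the prescribed way, exploiting that in $\ell_\infty$ the norm of $U\tlx - x_i$ is a supremum we can inflate coordinate-by-coordinate up to the triangle-inequality ceiling $\sum_k \frac13 d(x_k,x_i)$ and deflate down to the canonical value. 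The main obstacle I anticipate is precisely this realization step: showing that every admissible triple $(r_1,r_2,r_3)$ — those satisfying the triangle constraints and $r_i \leq 4/3$ and the right lower bounds — is attained by an honest isometric embedding, and conversely that no embedding escapes these constraints. This amounts to identifying the exact achievable region for $(\|U\tlx - x_1\|, \|U\tlx - x_2\|, \|U\tlx - x_3\|)$ over all isometric embeddings $U$, and it is here that a careful argument (rather than routine computation) is required.
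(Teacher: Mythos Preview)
Your forward (``only if'') direction is correct and is essentially the paper's argument: reduce via Proposition~\ref{prop:dist} to the three leaf inequalities $d(x_0,x_i)\le r_i$, establish $r_i\le 4/3$ and $r_i+r_j\ge 2$, order the $r_i$, and read off the shape. (One slip: for shape~(ii) the correct assignment is $r_{i_0}=1+\alpha$ and $r_j=1+\beta$ for $j\neq i_0$, not the reverse; with your choice leg $i_0$ gets upper cut $\min_{j\neq i_0}(r_j-1)=\alpha$, not $\beta$.)

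The genuine gap is in the converse. Your $\ell_\infty$ plan --- ``inflate coordinate-by-coordinate up to the ceiling and deflate down to the canonical value'' --- does not work as written. Adjoining a $1$-Lipschitz coordinate to an isometric $\ell_\infty$-embedding can only \emph{increase} each $r_i$, never decrease it; and the canonical (or universal) embedding already realizes the \emph{maximal} values $r_i=4/3$, not a floor you can inflate from. So to hit a target such as $(1-\alpha,\,1+\beta,\,1+\beta)$ with $\beta<1/3$ you cannot perturb the canonical embedding; you would need an initial isometric embedding that already has $r_2=r_3=1+\beta$, and you give no mechanism for producing one. The paper bypasses this obstacle by abandoning $\ell_\infty$ and writing down explicit embeddings into $L_1(0,2)$: three concrete functions $f_1,f_2,f_3$ with pairwise nonpositive products (so $(i,t)\mapsto t f_i$ is isometric), depending on parameters $a,c$ chosen in terms of $\alpha,\beta$, followed by a direct computation of $\|f_i-g\|_1$ that yields exactly the desired $r_i$; the boundary case $\beta=1/3$ is handled in $M([0,2])$ with Dirac masses. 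That explicit construction is the missing ingredient your proposal anticipates but does not supply.
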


\begin{proof}
First suppose $T$ is embedded in a normed space $X$, and show that $\pp(\tlx)$
is of the form described in the theorem. For $1 \leq i \leq 3$, let
$d_i = \|x_i - \tlx\|$. By relabeling, we can assume that $d_1 \leq d_2 \leq d_3$.
Note that $d_3 \leq 4/3$. Indeed,
$$
\eqalign{
d_3
&
=
\Big\|x_3 - \frac{x_1 + x_2 + x_3}{3}\Big\|
\cr
&
=
\frac{1}{3}\|(x_3 - x_1) + (x_3 - x_2)\| \leq
\frac{1}{3}\big( d(x_3,x_1) + d(x_3,x_2) \big) = \frac{4}{3} .
}
$$
Furthermore, $d_1 + d_2 \geq d(x_1,x_2) = 2$, hence in particular,
$d_1 \geq 2/3$, and $d_2 \geq 1$. Let $\beta = d_2 - 1$, and
$\alpha = |d_1 - 1|$.

By Proposition~\ref{prop:dist}, $x_0 = (i,t) \in T$ belongs to $\pp(\tlx)$ if and only if
$d(x_i, x_0) \leq d_i$ for each $i$. Thus, $x_0 = (1,t) \in \pp(\tlx)$ if and only if
two conditions are satisfied:
\begin{enumerate}
\item
$d(x_1, x_0) = 1 - t \leq d_1$, or in other words, $t \geq 1 - d_1$,
which translates to $t \geq \alpha$ or $t \geq 0$, depending on whether
$1 - d_1$ is positive or negative.
\item
$d(x_2, x_0) = 1 + t \leq d_2$, or in other words, $t \leq d_2 - 1 = \beta$.
\end{enumerate}
The third condition, $d(x_3, x_0) = 1 + t \leq d_3$, is subsumed in the
second one, as $d_3 \geq d_2$. Thus, $(1,t) \in \pp(\tlx)$ if and only if
$1 - \beta \leq t \leq 1 - \alpha$.

Similarly, $x_0 = (2,t) \in \pp(\tlx)$ if and only if two conditions are satisfied:
\begin{enumerate}
\item
$d(x_1, x_0) = 1 + t \leq d_1$, or in other words, $t \leq d_1 - 1$,
which means either $t \leq \alpha$ ($d_1 \geq 1$), or there are no
suitable $t$'s ($d_1 < 1$).
\item
$d(x_2, x_0) = 1 - t \leq d_2$, which is always true, since $d_2 \geq 1$.
\end{enumerate}
Thus, the set of $t$ for which $(2,t) \in \pp(\tlx)$ is either
$[0,\alpha]$, or $\emptyset$. The set $\{t : (3,t) \in \pp(\tlx)\}$
is described in a similar fashion.

Next we construct an embedding of $T$ into a Banach space $X$,
for which $\pp(\tlx) = S$.
Suppose first $0 \leq \alpha \leq \beta < 1/3$, and construct an embedding of $T$
into $L_1(0,2)$ with the property that $\pp(\tlx) = (1,[\alpha,\beta])$.
Let $c = 1 - 3 \beta$, and $a = (1-3\alpha)/c$. Define the functions
$f_1, f_2, f_3$ as follows:
$$
\eqalign{
&
f_1(u) = \left\{ \begin{array}{ll}
   1   &   0 \leq u \leq 1   \cr
   0   &   1 < u \leq 2
\end{array} \right. , \, \, \,
f_2(u) = \left\{ \begin{array}{ll}
   -a       &   0 \leq u \leq c/2   \cr
   0        &   c/2 < u \leq 1   \cr
   1-ac/2   &   1 < u \leq 2
\end{array} \right. ,
\cr
&
f_3(u) = \left\{ \begin{array}{ll}
   0           &   0 \leq u \leq 1-c/2   \cr
   -a          &   1-c/2 < u \leq 1   \cr
   -(1-ac/2)   &   1 < u \leq 2
\end{array} \right. .
}
$$
Note that $f_i f_j \leq 0$ for $i \neq j$, hence $\|t f_i - s f_j\| = t + s$
for positive $t$ and $s$. Therefore, the mapping $(i,t) \mapsto t f_i$
describes an embedding of $T$ into $L_1(0,2)$.

The barycenter $\tlx$ corresponds to the function $g$, given by
$$
g(u) = \left\{ \begin{array}{ll}
   -(a-1)/3   &   u \in [0,c/2] \cup (1-c/2,1]   \cr
   1/3        &   c/2 < u \leq 1-c/2   \cr
   0          &   1 < u \leq 2
\end{array} \right. .
$$
Then
$$
\|f_1-g\| = \Big(1 + \frac{a-1}{3}\Big)c + \frac{2}{3}(1-c) =
1 - \frac{1-ac}{3} = 1 - \alpha ,
$$
and
$$
\eqalign{
\|f_2-g\| = \|f_3-g\|
&
=
\frac{c}{2}\Big(a - \frac{a-1}{3}\Big) + \frac{1}{3}(1-c) +
\frac{c}{2} \cdot \frac{a-1}{3} + \Big(1 - \frac{ac}{2} \Big)
\cr
&
= 1 + \frac{1-c}{3} = 1 + \beta .
}
$$
By Proposition~\ref{prop:dist}(3), $\pp(\tlx)$ consists of all points
$x_0 \in T$ such that $d(x_i,x_0) \leq \|x_i - \tlx\|$ for $i \in \{1,2,3\}$;
that is, of all the points $(1,t)$ with $\alpha \leq t \leq \beta$.

To obtain
$S = (i_0,[0,\beta]) \cup \big(\cup_{i \neq i_0} (i,[0,\alpha])\big)$
we modify the above construction, by setting $c = 1 - 3 \beta$, and
$a = (1+3\alpha)/c$. Then $\|f_2 - g\| = \|f_3 - g\| = 1 + \beta$, and
$\|f_1 - g\| = 1 + (ac-1)/3 = 1 + \alpha$.

A modification of this construction works in the ``limit'' case of $\beta = 1/3$.
In this case embed $T$ into $M([0,2])$ (the space of regular Radon measures on $[0,2]$).
As before, let $\mu_1 = f_1 = \chi_{(0,1)}$, and set
$$
\mu_2 = a \delta_0 + (1-a) \delta_2 , \, \, \,
\mu_2 = a \delta_1 - (1-a) \delta_2 ,
$$
with $a \in [0,1]$ to be determined later
(here, $\delta_x$ is the Dirac measure supported by $x$). Once again, it
is easy to check that the map $(i,t) \mapsto t \mu_i$ defines an
embedding of $T$ to $M([0,2])$. The barycenter $\tlx$ corresponds to
the measure
$$
\nu = \frac{1}{3} \Big(a(\delta_0 + \delta_1) + \chi_{(0,1)} \Big)
$$
hence
$$
\|\mu_1 - \nu\| = \frac{2}{3} \big(1+a\big) = 1 - \frac{1-2a}{3} ,
\, \, \, {\mathrm{and}} \, \, \,
\|\mu_2 - \nu\| = \|\mu_3 - \nu\| = \frac{4}{3} .
$$
To obtain $\pp(\tlx) = (1,[\alpha,1/3])$, set $a=(1-3\alpha)/2$
(then $(1-2a)/3 = \alpha$). To end up with
$S = (1,[0,1/3]) \cup \big(\cup_{i \neq 1} (i,[0,\alpha])\big)$,
set $a=(3\alpha+1)/2$.
\end{proof}

\begin{proposition}\label{prop:nearest}
Suppose a 
metric tree $T$ is embedded isometrically into $L_1(\mu)$,
$x_1, \ldots$, $x_n$ are points of $T$, and $\alpha_1  \ldots \alpha_n$ are positive
numbers, satisfying $\sum_k \alpha_k = 1$. If $x_0 \in T$ belongs to $\pp(\tla)$,
then the unique point of $\con(x_1, \ldots, x_n)$, nearest to $x_0$  also belongs to
$\pp(\tla)$.
\end{proposition}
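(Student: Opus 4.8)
The plan is to reduce the statement to the distance criterion of Proposition~\ref{prop:dist}(2). Put $C=\con(x_1,\dots,x_n)$ and let $p$ be the point of $C$ nearest to $x_0$. I would first record two facts about $C$. It is the union of the finitely many geodesic segments $[x_i,x_j]$ (a standard fact in metric trees), hence compact, so the nearest-point map $P_C\colon T\to C$ is defined on all of $T$; and it satisfies the \emph{gate property} of metric trees, namely $P_C(y)\in[y,c]$ for every $y\in T$ and $c\in C$, equivalently $d(y,c)=d(y,P_C(y))+d(P_C(y),c)$. (This is a one-line median argument: if $w$ is the median of $y$, $P_C(y)$ and $c$, then $w\in[P_C(y),c]\subset C$ and $w\in[y,P_C(y)]$, so $w=P_C(y)$ by minimality of $d(y,P_C(y))$; the same argument gives uniqueness of $P_C(y)$.) In particular $p\in[x_0,c]$ for all $c\in C$, and $P_C(x)\in[x,x_k]$ for all $x\in T$ and all $k$. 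By Proposition~\ref{prop:dist}(2) applied to $p$, it then suffices to prove that $d(p,x)\leq\|\tla-x\|$ for every $x\in T$.

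The key step will be an $L_1$ additivity identity: for each $x\in T$, writing $q:=P_C(x)$,
$$\|\tla-x\|=\|\tla-q\|+d(q,x).$$
To prove it I would fix $k$ and use the gate property in the form $d(x,x_k)=d(x,q)+d(q,x_k)$, i.e. $\|(x-q)+(q-x_k)\|=\|x-q\|+\|q-x_k\|$ in $L_1(\mu)$; this equality forces $(x-q)(t)\,(q-x_k)(t)\geq 0$ for $\mu$-a.e.\ $t$. Multiplying by $\alpha_k>0$ and summing over $k$, and using $\sum_k\alpha_k(q-x_k)=q-\tla$, one gets $(x-q)(t)\,(q-\tla)(t)\geq 0$ for $\mu$-a.e.\ $t$, i.e. $x-q$ and $q-\tla$ have the same sign a.e.; hence $\|(x-q)+(q-\tla)\|=\|x-q\|+\|q-\tla\|$, which is the claimed identity.

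It remains to bound $\|\tla-q\|$ using the hypothesis $x_0\in\pp(\tla)$. For any $c\in C$, the gate property at $x_0$ gives $d(x_0,c)=d(x_0,p)+d(p,c)\geq d(p,c)$, while Proposition~\ref{prop:dist}(2) gives $d(x_0,c)\leq\|\tla-c\|$; hence $d(p,c)\leq\|\tla-c\|$ for every $c\in C$. Taking $c=q$ yields $d(p,q)\leq\|\tla-q\|$. Finally, since $p,q\in C$, the gate property for $x$ (with $c=p$) gives $q\in[x,p]$, so
$$d(p,x)=d(p,q)+d(q,x)\leq\|\tla-q\|+d(q,x)=\|\tla-x\|,$$
the last equality being the identity above. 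Thus $d(p,x)\leq\|\tla-x\|$ for all $x\in T$, and Proposition~\ref{prop:dist}(2) gives $p\in\pp(\tla)$.

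I expect the $L_1$ identity $\|\tla-x\|=\|\tla-P_C(x)\|+d(P_C(x),x)$ to be the only delicate point. For a general isometric embedding of $T$ into $L_1(\mu)$ one does \emph{not} have $\|\tla-x\|=\sum_k\alpha_k d(x_k,x)$, so the naive triangle-inequality estimate is useless; what makes the argument work is that only sign agreement with the single vector $x-P_C(x)$ is needed, and that is exactly what the gate property for the segments $[x,x_k]$ supplies. Everything else is routine metric-tree geometry of nearest-point projections together with Proposition~\ref{prop:dist}.
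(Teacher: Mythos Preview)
Your proof is correct and uses the same core ingredients as the paper's: the distance criterion of Proposition~\ref{prop:dist}(2), the gate property of the nearest-point projection onto $C=\con(x_1,\dots,x_n)$, and the $L_1$ sign argument (norm additivity $\|f+g\|=\|f\|+\|g\|$ forces $fg\geq 0$ a.e.). The organization, however, is different and arguably cleaner. The paper splits into two cases according to whether $[x',y]\cap C=\{x'\}$ (equivalently, whether $P_C(y)=x'$): in the first case it uses $d(x',y)\leq d(x_0,y)\leq\|\tla-y\|$ directly, and only in the second case invokes the $L_1$ sign argument at the single point $x'$. You instead apply the sign argument at $q=P_C(x)$ for every $x$, obtaining the uniform identity $\|\tla-x\|=\|\tla-q\|+d(q,x)$; this absorbs both of the paper's cases into one computation and makes transparent why the $L_1$ hypothesis is needed (sign agreement with the single vector $x-q$ suffices). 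What the paper's version buys is that it never needs the full equality, only $\|\tla-y\|\geq d(x',y)$ in Case~2; what yours buys is the elimination of the case split and a reusable lemma about $\tla$ and the projection.
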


\begin{proof}
Let $S = \con(x_1, \ldots, x_n)$.
Suppose $x_0 \in \pp(\tla)$, or equivalently (Proposition~\ref{prop:dist}),
$\|y - x_0\| \leq \|y - \tla\|$ for any $y \in T$.
Only the case of $x_0 \notin S$ needs to be studied.
Pick $x \in S$, and let $x^\prime$ be the point of $[x_0, x]$
with the property that
$d(x_0, x^\prime) = \inf \{ d(x_0, y) : y \in [x_0,x] \cap S\}$.
In other words, $x^\prime$ is the point of $[x_0, x] \cap S$,
farthest from $x$.
The set $S$ is closed, hence $x^\prime \in S$.

We claim that, for any $u \in S$, $x^\prime \in [x_0, u]$, and
consequently, $d(x_0,u) = d(x_0, x^\prime) + d(x^\prime, u)$.
Indeed, suppose, for the sake of contradiction, $x^\prime \notin [x_0, u]$.
Then there exists $z \in [x_0, x^\prime] \backslash \{x^\prime\}$ such that
$[x^\prime, u] = [x^\prime, z] \cup [z, u]$. By convexity, $z \in S$,
which is impossible, by the definition of $x^\prime$.

Next, we show that $x^\prime \in \pp(\tla)$. By Proposition~\ref{prop:dist},
it suffices to show that, for any $y \in T$,
$\|y - x^\prime\| \leq \|y - \tla\|$. We consider two cases:

(1) $[x^\prime, y] \cap S$ is strictly larger than $\{x^\prime\}$.
As $S$ is closed and metric convex, $[x^\prime, y] \cap S = [x^\prime, z]$,
for some $z$. We know that $[x_0, z] = [x_0, x^\prime] \cup [x^\prime, z]$,
hence $[x_0, y] = [x_0, x^\prime] \cup [x^\prime, y]$. Then
$d(x^\prime, y) = d(x_0, y) - d(x_0, x^\prime)$, and therefore,
$$
d(x^\prime, y) \leq d(x_0, y) \leq \|\tla - y\|
.$$

(2) $[x^\prime, y] \cap S = \{x^\prime\}$. In this case, note first
that, for any $u \in S$, $x^\prime \in [y, u]$, and
consequently, $d(y,u) = d(y, x^\prime) + d(x^\prime, u)$.
Indeed, if $x^\prime \notin [y, u]$, then there exists
$z \in [y, x^\prime] \backslash \{x^\prime\}$ such that
$[x^\prime, u] = [x^\prime, z] \cup [z, u]$. Then $z \in S$,
which contradicts our assumptions about $y$.

Now recall that the ambient space is $L_1(\Omega, \mu)$.
We can assume that $x^\prime = 0$. Then, for any $u \in S$,
$\|y - u\| = \|y\| + \|u\|$, hence $y u \leq 0$ $\mu$-a.e.~(we
view $y$ and $u$ as functions on $\Omega$).
As $x_1, \ldots, x_n \in S$, we also have $\tla y \leq 0$
$\mu$-a.e.. Therefore,
$\|y - \tla\| \geq \|y\| = d(x^\prime,y)$,
which is what we need.
\end{proof}

\begin{example}\label{l1_only}
Proposition ~\ref{prop:nearest} doesn't hold for embeddings into arbitrary spaces.
 Consider the ``spider''
$T = \{(i,t) : i \in \{1,2,3,4\}, 0 \leq t \leq 1\}$,
as in Example~\ref{big_set}. Embed $T$ into $\ell_\infty^3$ by setting
$(1,t) \mapsto (- e_1 + e_2 + e_3)t$, $(2,t) \mapsto (e_1 - e_2 + e_3)t$,
$(3,t) \mapsto (e_1 + e_2 - e_3)t$, and $(4,t) \mapsto (e_1 + e_2 + e_3)t$,
($e_1, e_2, e_3$ denote the canonical basis in $\ell_\infty^3$).
For $i = 1,2,3$, let $x_i = (i,1)$. Then the ``linear'' barycenter of
$\{x_1, x_2, x_3\}$ is $\tlx = (e_1 + e_2 + e_3)/3$. As this point
lies on the image of $T$ in $\ell_\infty^3$, $\pp(\tlx) = \{(4, 1/3)\}$.
This example also shows that the converse to
Corollary~\ref{cor:dist_tree} doesn't hold.
\end{example}

Finally, we present an example suggesting that nothing non-trivial can be said about
the distance from the ``linear'' barycenter of a tree to the tree itself.

\begin{example}\label{dist_to_tree}
Consider a ``spider'' $T$ with $n$ limbs of length $1$, that is, the set of
points $(i,t)$, with $1 \leq i \leq n$ and $0 \leq t \leq 1$, with the usual
radial metric. For $1 \leq i \leq n$ let $x_i = (i,1)$. Then there exists an
embedding of $T$ into $L_1(1,n+1)$ such that $\|x - \tlx\| \geq 1$ for any $x \in T$.
Indeed, the embedding taking $(i,t)$ to $\chi_{(i,i+t)}$ has the desired properties.
\end{example}

\section{Type, cotype, and convexity of metric trees}\label{type_cotype}

In this section we consider properties of metric spaces, such
as the four-point inequality and Reshetnyak's inequality (see
Definition~\ref{D:4pts}), type, and cotype.
The notion of metric type was introduced in \cite{BMW} (see also \cite{Pi}).
More recently, metric cotype was defined in \cite{MN}.


\begin{lemma}\label{l:4pts}
The four-point inequality implies Reshetnyak's inequality.
\end{lemma}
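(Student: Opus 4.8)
The plan is to reduce Reshetnyak's inequality to the six pairwise distances of the four points $x_1,x_2,x_3,x_4$ occurring in it, and then to argue by a short case analysis driven by the four-point inequality. Write
$$A=d(x_1,x_2)+d(x_3,x_4),\qquad B=d(x_1,x_3)+d(x_2,x_4),\qquad C=d(x_1,x_4)+d(x_2,x_3),$$
and let $A'=d(x_1,x_2)^2+d(x_3,x_4)^2$, $B'=d(x_1,x_3)^2+d(x_2,x_4)^2$, $C'=d(x_1,x_4)^2+d(x_2,x_3)^2$, so that the inequality to be proved is exactly $A'\leq B'+C'$. The first step is to record the elementary facts $A'\leq A^2$ (since $A^2=A'+2\,d(x_1,x_2)d(x_3,x_4)$), and $B'\geq\frac{1}{2}B^2$, $C'\geq\frac{1}{2}C^2$ (from $u^2+v^2\geq\frac{1}{2}(u+v)^2$). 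The second step is to apply the four-point inequality (Definition~\ref{D:4pts}) to $(x_1,x_2,x_3,x_4)$ and to its relabelings, which yields $A\leq\max(B,C)$, $B\leq\max(A,C)$, and $C\leq\max(A,B)$; in other words, the largest of $A,B,C$ is attained at least twice.

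Now I would split into two cases. If $A\leq B$ and $A\leq C$, then $B'+C'\geq\frac{1}{2}B^2+\frac{1}{2}C^2\geq\frac{1}{2}A^2+\frac{1}{2}A^2\geq\frac{1}{2}A'+\frac{1}{2}A'=A'$, and we are done. Otherwise, interchanging $x_3$ and $x_4$ if necessary (this swaps $B$ and $C$ and leaves $A$, $A'$, and the inequality to be proved unchanged), I may assume $A>B$; then $A\leq C$ by the four-point inequality, and since the maximum of $A,B,C$ is attained twice it must equal $A$, so $C=A$. Thus $d(x_1,x_2)+d(x_3,x_4)=d(x_1,x_3)+d(x_2,x_4)=:s$; writing $d(x_1,x_2)=\frac{s}{2}+p$, $d(x_3,x_4)=\frac{s}{2}-p$, $d(x_1,x_3)=\frac{s}{2}+q$, $d(x_2,x_4)=\frac{s}{2}-q$, the triangle inequality in $X$ gives $d(x_1,x_4)\geq|p+q|$ and $d(x_2,x_3)\geq|p-q|$, hence $C'\geq(p+q)^2+(p-q)^2=2p^2+2q^2$. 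Since $A'=\frac{s^2}{2}+2p^2$ and $B'=\frac{s^2}{2}+2q^2$, we get $A'-B'=2p^2-2q^2\leq 2p^2+2q^2\leq C'$, that is, $A'\leq B'+C'$.

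The one point that needs care is the combinatorial bookkeeping: one must invoke the four-point inequality for enough labelings of the four given points to obtain ``the largest of $A,B,C$ is attained twice'', and one must verify that the normalization $A>B$ (hence $C=A$) in the second case costs nothing, which holds because transposing $x_3$ and $x_4$ fixes $A$ and $A'$ and merely permutes the distances on the right-hand side of Reshetnyak's inequality. Everything else is the two elementary estimates above, and no appeal to trees or CAT(0) geometry is required. Alternatively, one could invoke the fact that a $0$-hyperbolic four-point space embeds isometrically into a metric tree, cf.\ \cite{Dr}, \cite[Chapter 3]{Ev}, and compute the six distances from the five edge lengths of the resulting quartet; but the argument above avoids this.
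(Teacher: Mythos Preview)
Your argument is correct and follows essentially the paper's route: normalize using the four-point inequality, parametrize two of the three pairwise sums by two real parameters, bound the remaining two distances from below via the triangle inequality, and verify an elementary quadratic identity. There is one labeling slip to fix: in your second case you correctly deduce $A=C$, but the displayed equality ``$d(x_1,x_2)+d(x_3,x_4)=d(x_1,x_3)+d(x_2,x_4)$'' is $A=B$, not $A=C$; you should instead write $d(x_1,x_2)+d(x_3,x_4)=d(x_1,x_4)+d(x_2,x_3)=:s$ and set $d(x_1,x_4)=\tfrac{s}{2}+q$, $d(x_2,x_3)=\tfrac{s}{2}-q$, after which the identical computation (with the roles of $B'$ and $C'$ exchanged) gives $B'\geq 2p^2+2q^2$ and hence $A'-C'=2p^2-2q^2\leq B'$.
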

\begin{proof}
Suppose the elements $x_1, x_2, x_3, x_4$ of a metric space $(X,d)$ satisfy
$$
d(x_1,x_2) + d(x_3,x_4) \leq \max\{ d(x_1,x_3) + d(x_2,x_4) ,
d(x_1,x_4) + d(x_2,x_3) \} ,
$$
and show that
$$
d(x_1,x_2)^2 + d(x_3,x_4)^2 \leq d(x_1,x_3)^2 + d(x_2,x_4)^2 +
d(x_1,x_4)^2 + d(x_2,x_3)^2 .
$$
By scaling and relabeling, we can assume that
$$d(x_1,x_2) + d(x_3,x_4) = 1 \leq d(x_1,x_3) + d(x_2,x_4).$$
Let $a = d(x_1,x_2)$, $b = d(x_1,x_3)$. Then $d(x_3,x_4) = 1 - a$,
$d(x_2,x_4) = 1 - b$, and furthermore,
$$
\eqalign{
d(x_1,x_4)
&
\geq
|d(x_1,x_3) - d(x_3,x_4)| = |a+b-1| ,
\, \, \, {\mathrm{and}} \, \, \,
\cr
d(x_2,x_3)
&
\geq
|d(x_1,x_2) - d(x_1,x_3)| = |a-b| .
}
$$
Thus, it suffices to show that, for any $a \in [0,1]$ and $b \geq 0$,
$$
a^2 + (1-a)^2 \leq b^2 + (1-b)^2 + (a+b-1)^2 + (a-b)^2 .
$$
The last inequality is easily verified.
\end{proof}

Therefore, any metric tree is a CAT(0) space.
Below we show that metric trees are ``more convex'' (that is, their
moduli of convexity are larger) than those of ``generic'' CAT(0) spaces.

\begin{lemma}\label{l:convex}
Suppose $T$ is a complete metric tree. Then, for any $R > 0$ and $\vr \in [0,2R]$,
$\moco_M(R,\vr) \geq 1 - \vr/2$.
\end{lemma}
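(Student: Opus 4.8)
The claim is that a complete metric tree $T$ satisfies $\moco_T(R,\vr)\geq 1-\vr/2$ for all $R>0$ and $\vr\in[0,2R]$. Unwinding Definition~\ref{D:convexity}, I must show: for any $a\in T$, any $x_1,x_2\in T$ with $\max\{d(a,x_1),d(a,x_2)\}\leq R$ and $d(x_1,x_2)\geq R\vr$, the midpoint $m$ of $[x_1,x_2]$ satisfies $d(m,a)\leq R(1-\vr/2)$. The plan is to exploit the tree structure directly via the three-point property (Statement (2) in Section~\ref{intro}) rather than any CN-type inequality: in a tree the "median" point governs all distances exactly, so the estimate should come out sharp with an equality-based computation.

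\textbf{Key steps.} First I would invoke the three-point property for $x_1,x_2,a$: there is a unique $w\in T$ with $[x_1,a]\cap[x_2,a]=[w,a]$ and $w\in[x_1,x_2]$, so that $[x_1,x_2]=[x_1,w]\cup[w,x_2]$ and $d(a,x_i)=d(a,w)+d(w,x_i)$ for $i=1,2$. Write $p=d(a,w)$, $r_1=d(w,x_1)$, $r_2=d(w,x_2)$; then $d(x_1,x_2)=r_1+r_2$ and $p+r_i=d(a,x_i)\leq R$. Second, I would locate the midpoint $m$ of $[x_1,x_2]$: since $w$ lies on the segment and splits it into pieces of length $r_1$ and $r_2$, the point $m$ is the point of $[x_1,x_2]$ at distance $(r_1+r_2)/2$ from each endpoint, so $d(m,w)=|r_1-r_2|/2$ and $m$ lies on the sub-segment from $w$ toward the longer side. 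Third, compute $d(a,m)$: since $m\in[x_1,x_2]$ and $w\in[x_1,x_2]$ with $[a,x_i]=[a,w]\cup[w,x_i]$, the segment from $a$ to $m$ passes through $w$, giving $d(a,m)=p+d(w,m)=p+|r_1-r_2|/2$. Fourth, I would bound this: WLOG $r_1\geq r_2$, so $d(a,m)=p+(r_1-r_2)/2=\tfrac12(p+r_1)+\tfrac12(p-r_2)\leq \tfrac12 R+\tfrac12(p-r_2)\leq \tfrac12 R+\tfrac12(r_1 - r_2) $ — more cleanly, since $p+r_1\leq R$ we get $d(a,m)= (p+r_1) - (r_1+r_2)/2 \leq R - (r_1+r_2)/2 = R - d(x_1,x_2)/2 \leq R - R\vr/2 = R(1-\vr/2)$, using $d(x_1,x_2)\geq R\vr$ in the last step. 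Taking infima over $(x_1,x_2)$ and then over $a$ yields the claim.

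\textbf{Main obstacle.} The only genuinely delicate point is verifying that the segment $[a,m]$ really does pass through $w$, i.e.\ that $d(a,m)=d(a,w)+d(w,m)$; everything else is bookkeeping. This follows from transitivity of betweenness (Statement (1)): from $x_1,w,x_2$ and $x_1,w,a$ — more precisely from $a,w,x_1$ and $w,m,x_1$ (when $m$ is on the $x_1$-side of $w$) one deduces $a,w,m$. I should be slightly careful about the degenerate cases $w=x_1$ or $w=x_2$ (when $a$ lies on the segment, or beyond one endpoint) and the case $r_1=r_2$ where $m=w$, but in each of these the same inequality $d(a,m)\leq R - d(x_1,x_2)/2$ holds by the identical arithmetic. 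I would note in passing that the bound is sharp on, say, a segment $T=[0,1]$ with $a$ an endpoint, which is why no better universal estimate is possible.
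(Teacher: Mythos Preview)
Your proposal is correct and takes essentially the same approach as the paper: both invoke the three-point property to produce the median point $w$ (the paper calls it $y$) on $[x_1,x_2]$, relabel so that $m$ lies on the $x_1$-side of $w$, and then read off $d(a,m)=d(a,x_1)-d(m,x_1)\leq R - d(x_1,x_2)/2\leq R(1-\vr/2)$. The paper's write-up is slightly terser in that it bypasses the explicit computation of $d(a,m)=p+|r_1-r_2|/2$ and instead uses directly that $m\in[y,x_1]\subset[a,x_1]$, but the underlying idea and the betweenness justification you flag as the ``main obstacle'' are identical.
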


\begin{proof}
Suppose $a, x_1, x_2 \in T$ are such that $\max\{d(a,x_1), d(a,x_2)\} \leq R$,
and $d(x_1,x_2) \geq R \vr$. We have to show that $d(a,m) \leq R - R\vr/2$, where
$m$ is the midpoint of $[x_1,x_2]$. Find $y \in [x_1,x_2]$ such that
$[a,x_1] = [a,y] \cup [y,x_1]$, and $[a,x_2] = [a,y] \cup [y,x_2]$.
Relabeling if necessary, we can assume that $y \in [x_2,m]$.
Then $m \in [y,x_1]$, hence
$R \geq d(a,x_1) = d(a,m) + d(m,x_1) \geq d(a,m) + R \vr/2$.
Thus, $d(a,m) \leq R - R\vr/2$.
\end{proof}

\begin{definition}\label{D:metr_type}
Suppose $1 \leq p \leq 2$, and $K > 0$.
A metric space $(X,d)$ is said to have {\it metric type $p$} (or {\it BMW type $p$}),
after Bourgain, Milman, and Wolfson, who introduced this notion in \cite{BMW})
with constant $K$ if, for any $n \in \N$, and any function
$f : \{-1,1\}^n \to X$,
we have
\begin{equation}
\sum_{\vr \in \{-1,1\}^n} d(f(\vr), f(-\vr))^2 \leq
K n^{1/p-1/2} \sum_{\vr \in \{-1,1\}^n} \sum_{i=1}^n d(f(\vr), f(\vr^{[i]}))^2 ,
\label{def_type}
\end{equation}
where $(\vr_1, \ldots, \vr_n)^{[i]} =
(\vr, \ldots, \vr_{i-1}, - \vr_i, \vr_{i+1}, \ldots, \vr_n)$.
On an intuitive level, we can think of the points $f(\vr)$ as
vertices of a ``cube.'' Then the left hand side of (\ref{def_type}) is the sum
of the squares of the ``diagonals'' of this cube, while the right hand side
involves its ``edges.''

We do not quote the definition of metric cotype, due to space constraints.
Instead, we refer the reader to \cite{MN}.

\end{definition}


\begin{theorem}\label{thm:type}

\begin{enumerate}
\item
Any metric space satisfying the four-point inequality has metric type 2, with
constant $1$. In particular, this result holds for metric trees.
\item
Any complete metric tree has metric cotype 2, with a universal constant.
\end{enumerate}
\end{theorem}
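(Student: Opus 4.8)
The plan is to establish both parts by passing through the Euclidean case and exploiting the isometric embeddings of trees developed in Section~\ref{s:embed}. For part (1), I would first recall the classical fact (Enflo / Bourgain--Milman--Wolfson) that a Hilbert space has metric type $2$ with constant $1$: for any $f:\{-1,1\}^n\to H$, the parallelogram identity applied repeatedly gives exactly
\[
\sum_{\vr} \|f(\vr)-f(-\vr)\|^2 \;\leq\; \sum_{\vr}\sum_{i=1}^n \|f(\vr)-f(\vr^{[i]})\|^2 .
\]
The key point is then that metric type~$2$ with constant~$1$ is inherited by any metric space that embeds isometrically into $L_2$. But a metric space satisfying the four-point inequality is $0$-hyperbolic, hence (by \cite{Dr} and Reshetnyak's inequality, via Lemma~\ref{l:4pts}) it embeds isometrically into a metric tree, and it suffices to show a metric tree embeds isometrically into $L_2(\mu)$ in a type-preserving way. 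Here I would use the "semicanonical" embedding into $L_1(\mu_T)$ from Section~\ref{s:embed}: the map $x\mapsto \chi_{[x_0,x]}$ realizes $d(x,y)$ as the $L_1$-norm of a $\pm$-combination of indicator functions, and since $\|\chi_A-\chi_B\|_{L_1(\mu)} = \mu(A\triangle B) = \|\chi_A-\chi_B\|_{L_2(\mu)}^2$, the same map sends $T$ isometrically into $L_2(\mu_T)$ after the standard "square-root" trick (replacing the $L_1$ metric on $\{0,1\}$-valued functions by the corresponding $L_2$ metric). Concretely, $d(x,y)^{1/2}$ is a metric realized in $L_2$, but more usefully $d$ itself embeds: the Kuratowski-type observation is that on indicator functions the $L_1$ and squared-$L_2$ distances coincide, so $T$ with metric $d$ embeds isometrically into $L_2(\mu_T)$. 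Substituting this embedding into the Hilbert-space inequality above yields \eqref{def_type} with $p=2$, $K=1$, for $T$, and hence for any $0$-hyperbolic space by the embedding of \cite{Dr}.

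For part (2), metric cotype~$2$, the strategy is parallel but I would again reduce to the Hilbert (or $L_p$) case via the $L_1$-embedding. By Mendel--Naor \cite{MN}, $L_1$ has metric cotype~$2$ with a universal constant, and metric cotype with a given constant passes to isometric subsets; since a complete metric tree embeds isometrically into $L_1(\mu_T)$ by the semicanonical embedding of Section~\ref{s:embed}, it inherits metric cotype~$2$ with a universal constant. I would state this cleanly: quote the Mendel--Naor theorem that $L_p$ for $1\le p\le 2$ has metric cotype~$2$, observe that the semicanonical embedding is a genuine isometry onto its image, and note that the cotype inequality only references distances between points of the image, so it descends verbatim to $T$.

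The main obstacle is the step asserting that a metric tree (equivalently, a $0$-hyperbolic geodesic space) embeds isometrically into $L_2$ with the metric $d$ unchanged — not merely $\sqrt d$. The clean route is: first embed $T$ isometrically into $L_1(\mu_T)$ via $x\mapsto\chi_{[x_0,x]}$ as in Section~\ref{s:embed}; then use the elementary identity that for indicator functions $\|\chi_A-\chi_B\|_1=\|\chi_A-\chi_B\|_2^2$, so that the $L_1$-embedding, reinterpreted, witnesses that the metric space $(T,\sqrt d)$ embeds into $L_2$, whence $(T,d)$ embeds into $L_2$ by the standard fact that a metric $\rho$ embeds in $L_2$ iff $\rho$ is of "negative type," and $d$ on a tree is of negative type precisely because $\sqrt d$ embeds in $L_2$ — wait, this needs care. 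The correct and safest formulation, which I would adopt, is simply: \emph{$(T,d)$ embeds isometrically into $L_1(\mu_T)$} (proved in Section~\ref{s:embed}) \emph{and $(T,d)$ embeds isometrically into $L_2$} because on $\{0,1\}$-valued functions the $L_1$ metric is realized as a squared $L_2$-distance and $d(x,y)=\mu([x_0,x]\triangle[x_0,y])$; one checks directly that $x\mapsto$ (the $L_2$-function whose square matches the symmetric-difference structure) is an isometry for $d$ using that $\sqrt{|s-t|}$-type obstructions vanish on an $\R$-tree because intervals overlap in intervals. If this geometric verification is awkward, the fallback is to invoke the known result that $\R$-trees embed isometrically into $L_2$ (indeed into Hilbert space) — this is classical and can be cited. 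Once that embedding is in hand, both parts follow immediately from the Euclidean metric type/cotype inequalities by restriction, since every quantity in \eqref{def_type} and in the Mendel--Naor cotype inequality is a function of pairwise distances alone. I would therefore organize the write-up as: (a) recall $L_2$ has metric type~$2$ with constant~$1$ and $L_1$ has metric cotype~$2$ with a universal constant; (b) recall/establish the isometric embeddings $T\hookrightarrow L_2$ and $T\hookrightarrow L_1$; (c) conclude by restriction, and extend part~(1) to arbitrary $0$-hyperbolic spaces using \cite{Dr}.
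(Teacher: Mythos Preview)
Your Part~(2) is essentially the paper's argument: embed $T$ isometrically into $L_1$, invoke Mendel--Naor to get metric cotype~$2$ for $L_1$ with a universal constant, and restrict. The paper phrases this via finitely generated subtrees in $\ell_1^N$ plus a locality remark, while you use the semicanonical embedding into $L_1(\mu_T)$ directly; these are the same idea.

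Part~(1), however, has a genuine gap. Metric trees do \emph{not} embed isometrically into $L_2$ with the metric $d$ unchanged: already the tripod of Example~\ref{E:tripod} (three endpoints at mutual distance~$2$, each at distance~$1$ from the center) cannot be realized in a Hilbert space, since three unit vectors cannot be pairwise antipodal. Your ``fallback'' citation that this embedding is classical is therefore false. What \emph{is} true --- and what your indicator-function identity $\|\chi_A-\chi_B\|_1=\|\chi_A-\chi_B\|_2^2$ actually shows --- is that $(T,\sqrt{d})$ embeds isometrically into $L_2$ (equivalently, $d$ is of negative type). Feeding this into the Hilbert-space Enflo inequality yields
\[
\sum_{\vr} d\bigl(f(\vr),f(-\vr)\bigr) \;\leq\; \sum_{\vr}\sum_{i=1}^n d\bigl(f(\vr),f(\vr^{[i]})\bigr),
\]
i.e.\ the inequality with exponent~$1$ on distances, not exponent~$2$ as in \eqref{def_type}. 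This does not imply \eqref{def_type}: $\sum a_k\le\sum b_k$ does not give $\sum a_k^2\le\sum b_k^2$. So the reduction to the Euclidean case fails at exactly the step you flagged as needing care. The paper sidesteps this by citing \cite{NS}, whose argument for $0$-hyperbolic spaces is not a bare restriction-from-Hilbert-space argument; if you want a self-contained proof you will need a different mechanism (e.g.\ an induction on~$n$ using the four-point inequality directly, as in \cite{NS}).
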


\begin{proof}
Part (1) was proved in \cite{NS}.
For Part (2), recall that any $L_1$ space has cotype $2$,
with the constant $\sqrt{2}$ (this classical fact can be seen,
for instance, by combining the Khintchine constant from \cite{Sz}
with the basic properties of cotype, described in e.g.~\cite{JL}).
Therefore, by Theorem 1.2 of \cite{MN},
$L_1$ has metric cotype $2$, with the constant $90 \sqrt{2}$.
We have seen that any finitely
generated metric tree embeds isometrically into $\ell_1^N$, for some $N$. As the
cotype passes to subspaces, any finitely generated tree must have metric cotype $2$, with constant $90 \sqrt{2}$. Finally, metric cotype is a ``local'' property, hence any complete metric tree must possess it.
\end{proof}


We next tackle the negative type of metric trees.
Recall that a metric space $X$ has {\it{negative type $p$}}
($p > 0$) if,
for any $x_1, \ldots, x_n \in X$, the $n \times n$ matrix
$(d(x_i, x_j)^p)$ is conditionally negative definite.
Recall that a Hermitian matrix $A = (a_{ij})_{i,j=1}^n)$ is
conditionally negative definite if
$\sum_{i,j=1}^n a_{ij} \xi_i \overline{\xi_j} \leq 0$ whenever
the vector $\xi = (\xi_i)_{i=1}^n$ satisfies $\sum_i \xi_i = 0$.
The notion of $p$-negative type is equivalent to $p$-roundness,
see e.g. \cite{DW, LTW}.
Negative type is strongly related to positive definiteness of kernels,
and to embeddability into $L_p$-spaces (see e.g.~Section 8.1 of \cite{BL}).

It was shown in \cite{HLMT} that any metric tree has negative
type $1$. Therefore, it has negative type $p$ for any $p \in (0,1]$.
We shall show that a metric tree need not have negative type $p$ for
$p > 1$. More precisely, consider the ``spider'' $T_n$, consisting of
a central point, and $n$ limbs of length $1$.

\begin{proposition}\label{prop:neg_type}
If $p > 1$, then $T_n$ fails to have negative type $p$ for
$n$ large enough.
\end{proposition}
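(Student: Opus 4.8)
The strategy is to compute the relevant quadratic form explicitly on the spider $T_n$ and exhibit a vector $\xi$ with $\sum_i \xi_i = 0$ that makes it positive. Label the points of $T_n$ as the center $o$ together with the $n$ tips $x_1, \ldots, x_n$, where $d(o,x_i)=1$ and $d(x_i,x_j)=2$ for $i\neq j$. Apply the definition of $p$-negative type to the $(n+1)$-point configuration $\{o, x_1, \ldots, x_n\}$: we must check whether $\sum_{u,v} d(u,v)^p \xi_u \xi_v \leq 0$ for all real $\xi$ with $\sum_u \xi_u = 0$. Writing $\xi_o = -\sum_{i=1}^n \xi_i$ and using $d(o,x_i)^p = 1$, $d(x_i,x_j)^p = 2^p$, the sum becomes
\begin{equation}
2\sum_{i} \xi_o \xi_i + 2^p \sum_{i\neq j} \xi_i\xi_j
= -2\Big(\sum_i \xi_i\Big)^2 + 2^p\Big(\big(\sum_i \xi_i\big)^2 - \sum_i \xi_i^2\Big).
\label{eq:quadform}
\end{equation}

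Now I would specialize to the symmetric test vector $\xi_i = 1$ for all $i = 1, \ldots, n$ (so $\xi_o = -n$), which indeed satisfies $\sum_u \xi_u = 0$. Then $\sum_i \xi_i = n$ and $\sum_i \xi_i^2 = n$, so the expression in \eqref{eq:quadform} equals $-2n^2 + 2^p(n^2 - n) = n\big((2^p-2)n - 2^p\big)$. For $p > 1$ we have $2^p - 2 > 0$, so this quantity is strictly positive as soon as $n > 2^p/(2^p - 2)$. Hence for all sufficiently large $n$ the matrix $(d(u,v)^p)$ fails to be conditionally negative definite, i.e. $T_n$ does not have negative type $p$.

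The computation is elementary; the only point requiring a word of care is making sure the chosen $\xi$ genuinely lies in the subspace $\{\sum_u \xi_u = 0\}$ and that we are testing the right matrix (the full $(n+1)\times(n+1)$ matrix including the center, not just the tips — indeed, restricting to the tips alone would give an equilateral configuration, which has negative type for every $p$, so the center is essential to the obstruction). I would also remark that this dovetails with the preceding discussion: since a metric tree with negative type $p$ would embed into $L_p$, and all the $T_n$ are metric trees, failure of negative type $p$ for large $n$ shows the embeddability into $L_p$ (for $p>1$) cannot be uniform. No serious obstacle is anticipated; the threshold $n_0(p) = \lceil 2^p/(2^p-2)\rceil$ can be stated explicitly if desired.
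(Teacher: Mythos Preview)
Your proof is correct and reaches exactly the same threshold $n > 2^p/(2^p-2)$ as the paper, but by a more direct route. The paper works with the same $(n+1)\times(n+1)$ matrix $C$ but first passes to the auxiliary matrix $A = -\frac{1}{c-1}(C-cD)$ (where $c=2^p$ and $D$ is the all-ones matrix), computes its eigenvalues and eigenvectors, and then tests the vector $(\sqrt{2n},-\sqrt{2n},0,\ldots,0)$, which is supported on the center and a single tip. You instead plug the symmetric vector $(-n,1,\ldots,1)$ directly into the quadratic form $\sum d(u,v)^p\xi_u\xi_v$ and read off the sign. Your computation is shorter and avoids the spectral analysis entirely; the paper's approach has the minor advantage of displaying the full eigenstructure of $A$, but that information is not used elsewhere. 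Your parenthetical observation that the center must be included (the tips alone form an equilateral set, which is conditionally negative definite for every $p$) is a nice clarification absent from the paper. The closing remark linking negative type $p$ to embeddability into $L_p$ is tangential to the proof and would need more care to state precisely, but it does not affect the argument.
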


\begin{proof}
Suppose 
$n > c/(c-2)$, where $c = 2^p$.
Consider the subset of $T_n$, consisting of
the central point $t_0$, and the endpoints $t_1, \ldots, t_n$.
The corresponding $(n+1) \times (n+1)$ distance matrix is
$$
C = \left( \begin{array}{llllll}
   0      &  1     &  1     &  1     &  \ldots  &  1     \cr
   1      &  0     &  c     &  c     &  \ldots  &  c     \cr
   1      &  c     &  0     &  c     &  \ldots  &  c     \cr
   \ldots & \ldots & \ldots & \ldots &  \ldots  & \ldots \cr
   1      &  c     &  c     &  c     &  \ldots  &  0
\end{array} \right) .
$$
We shall show the existence of
$\xi = (\xi_0, \xi_1, \ldots, \xi_n)$ such that
$\xi_1 + \ldots + \xi_n = - \xi_0$, and
$\langle C \xi, \xi \rangle > 0$. Note that, for $\xi$ as above,
$D \xi = 0$, where the $D$ is a matrix of all whose entries equal $1$.
Let
$$
A = - \frac{1}{c-1} (C - cD) =
\left( \begin{array}{llllll}
   a      &  1     &  1     &  1     &  \ldots  &  1     \cr
   1      &  a     &  0     &  0     &  \ldots  &  0     \cr
   1      &  0     &  a     &  0     &  \ldots  &  0     \cr
   \ldots & \ldots & \ldots & \ldots &  \ldots  & \ldots \cr
   1      &  0     &  0     &  0     &  \ldots  &  a
\end{array} \right) ,
$$
where $a = c/(c-1) < 2$. It suffices to find
$\xi = (\xi_0, \xi_1, \ldots, \xi_n)$ such that
$\xi_1 + \ldots + \xi_n = - \xi_0$, and
$\langle A \xi, \xi \rangle < 0$.

An induction argument yields the determinant of this
$(n+1) \times (n+1)$ matrix: $\det A = a^{n+1} - n a^{n-1}$.
Thus, $A$ has $n-1$ eigenvalues equal to $0$, as well as non-zero
eigenvalues $\lambda_1 = a - \sqrt{n}$ and $\lambda_2 = a + \sqrt{n}$.
The corresponding normalized eigenvectors are
$\eta^1 = (-\sqrt{n}, 1, \ldots, 1)/\sqrt{2n}$ and
$\eta^2 = (\sqrt{n}, 1, \ldots, 1)/\sqrt{2n}$. Then,
for any $\xi \in \ell_2^n$,
$$
\langle A \xi, \xi \rangle =
\lambda_1 | \langle \xi, \eta^1 \rangle |^2 +
\lambda_2 | \langle \xi, \eta^2 \rangle |^2 .
$$
Now consider $\eta = (\sqrt{2n}, -\sqrt{2n}, 0, \ldots, 0)$. Then
$\langle \xi, \eta^1 \rangle = - \sqrt{n} - 1$, and
$\langle \xi, \eta^1 \rangle = \sqrt{n} - 1$. Therefore,
$$
\langle A \xi, \xi \rangle =
(a - \sqrt{n}) (\sqrt{n} + 1)^2 + (a + \sqrt{n}) (\sqrt{n} - 1)^2 =
2 a(n+1) - 4n ,
$$
which is negative, by our choice of $n$.
\end{proof}

Finally, we note that all metric trees have Markov type $2$
\cite{NPSS}.

\section{Entropy Quantities and Other Measures of Compactness}\label{compact}

\subsection{$\epsilon$-entropy and related quantities}
Kolmogorov introduced the notion of $\epsilon$-entropy as
a measure of the massiveness of sets \cite{Kolm}.
    This notion has been useful in function spaces
    (see \cite{Edmu}), especially with asymptotic distribution of
    eigenvalues of elliptic operators, or as a way of measuring the
    sizes of spaces of solutions to PDE's \cite{Chep}. Recently
    entropy and $n$-widths has been utilized as a measure of efficiency in the task of data
    compression (see \cite{Dovd}, \cite{Posn}, \cite{Dono}).
   In this section we examine  the notion of entropy and its connection to the fact that
complete metric trees are centered (Theorem~\ref{T:cmt centered}).
This proves the useful fact that the $\epsilon$-entropy of a bounded
subset $A$ of a tree $T$ is equal to the $\epsilon$-entropy of $A$ relative to
$(T,d)$. We also connect the covering numbers of a compact subset of a tree with these of its
convex hull (see e.g.~\cite{CKP} for some Banach space results in the same vein).

\begin{definition}\label{D:netdist}
Suppose $A$ is a subset of a metric space $M$.
\begin{itemize}
\item
$A$ is \emph{centered} if for all $U\subset A$ such that $\diam(U) =2r$, there exists
$a\in A$ such that $U \subset B_{c} (a;r)$. By $B_{c} (a;r)$ we mean the closed ball
of radius $r$ centered at $a$.
 \item
 $\{m_i\}_{i\in I} \subset M$
      is an \emph{$\ep$-net for $A$ in $M$} if

    $$A \subset \bigcup_{i \in I} B(m_i;\ep)$$

   \item   $\{U_\alpha\}_{\alpha \in I}$,
    where $U_\alpha \subset M $, is an \emph{$\ep$-cover for $A$} if
    $\diam(U_\alpha) \leq 2\ep$ and $$A \subset \displaystyle \bigcup_{\alpha \in
    I}U_\alpha .$$
     \item
     $U \subset A$ is a \emph{$\ep$-separated subset of $A$} if

   $$ \ep \leq x_ix_j \quad \text{for all }i,j \in I \text{ with } i \not=j,\,\, x_i \not=x_j \in U.$$

\end{itemize}
Let $\mathcal{N}_\ep(A)$ ($\mathcal{K}_\ep^{M}(A)$) be the cardinality of a minimal
$\ep$-cover of $A$ (respectively ~minimal $\ep$-net for $A$ in $M$). 
Define $\mathcal{M}_{\ep}(A)$ as the maximal cardinality of an $\ep$-separated subset
of $A$.
\end{definition}

Note that, if $A$ is a complete metric tree, then it is injective,
hence $\mathcal{K}_\ep^{M}(A) = \mathcal{K}_\ep^{A}(A)$ for any ambient space $M$.

\begin{theorem}\label{T:cmt centered}
    Every complete metric tree $T$ is centered.
\end{theorem}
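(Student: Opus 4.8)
The plan is to show that any bounded subset $U$ of a complete metric tree $T$ of diameter $2r$ is contained in a closed ball of radius $r$ centered at a point of $T$; the candidate center will be built using the three-point property and hyperconvexity. First I would dispose of the trivial cases ($U=\emptyset$ or $U$ a single point), and then fix two points $x,y\in U$ realizing (or nearly realizing) the diameter. Let $m$ be the midpoint of the metric segment $[x,y]$, so $d(m,x)=d(m,y)\le r$. The natural guess is that $m$ works as the center, and the heart of the proof is to verify that $d(m,u)\le r$ for every $u\in U$.

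To check this, fix an arbitrary $u\in U$ and apply the three-point property (Statement (2) in Section~\ref{intro}) to the triple $x,y,u$: there is a unique $w\in T$ with $[x,y]=[x,w]\cup[w,y]$, $[x,u]=[x,w]\cup[w,u]$, and $[y,u]=[y,w]\cup[w,u]$. Since $m\in[x,y]=[x,w]\cup[w,y]$, the point $m$ lies on one of the two subsegments; by symmetry assume $m\in[w,y]$, so that $w\in[x,m]$ and hence $d(x,w)\le d(x,m)\le r$, while $d(w,m)=d(x,m)-d(x,w)$. Now $d(m,u)=d(m,w)+d(w,u)$ because $w\in[m,x]\subset$ and $[x,u]$ passes through $w$; more precisely $m\in[x,y]$, $w$ between $x$ and $m$, and $[x,u]=[x,w]\cup[w,u]$ give $d(m,u)=d(m,w)+d(w,u)$. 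Then
$$
d(m,u)=d(m,w)+d(w,u)=\big(d(x,m)-d(x,w)\big)+d(w,u)=d(x,m)+\big(d(w,u)-d(x,w)\big).
$$
On the other hand $d(x,u)=d(x,w)+d(w,u)$, so $d(w,u)-d(x,w)=d(x,u)-2d(x,w)\le d(x,u)\le 2r$; combined with $d(x,m)=r$ this only gives the weak bound $d(m,u)\le 3r$, so a more careful estimate is needed — this is the main obstacle.

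To get the sharp bound I would instead compare $d(m,u)$ against $d(x,u)$ and $d(y,u)$ simultaneously. Writing $p=d(x,w)$, $q=d(y,w)$, $t=d(w,u)$, we have $p+q=d(x,y)=2r$ (WLOG $d(x,y)=2r$), $d(x,u)=p+t\le 2r$, $d(y,u)=q+t\le 2r$, and $d(m,u)=|p-q|/2+t$ since $m$ is the midpoint of $[x,y]$ at distance $r$ from each of $x,y$ and $w\in[x,y]$. Adding the two inequalities $p+t\le 2r$ and $q+t\le 2r$ gives $2t\le 4r-(p+q)=2r$, so $t\le r$; and from $p+q=2r$ we get $|p-q|/2\le r$, but we need the combination $|p-q|/2+t\le r$, which follows because $|p-q|/2+t\le \max\{p,q\}-\min\{p,q\})/2+t$ and using $\min\{p,q\}+t = \min\{d(x,u),d(y,u)\}-(\max\{p,q\}-\min\{p,q\})\ge 0$... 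I would organize this as: from $p+t\le 2r$ and $q+t\le 2r$ we get $t\le r - \tfrac{1}{2}|p-q|$ after subtracting $p+q=2r$ appropriately, namely $2t \le 4r-(p+q) - \text{(nonneg)}$; cleanly, $|p-q|/2+t\le |p-q|/2 + (2r-\max\{p,q\}) = 2r - \max\{p,q\} + |p-q|/2 = 2r - (\max\{p,q\}+\min\{p,q\})/2 = 2r - r = r$. Hence $d(m,u)\le r$ for all $u\in U$, so $U\subset B_c(m;r)$ with $m\in T$, proving $T$ is centered. If the diameter is a supremum not attained, I would either pass to the closure (using completeness) or choose $x,y$ with $d(x,y)$ within $\varepsilon$ of $2r$ and take a limit of the resulting midpoints, again invoking completeness of $T$; alternatively one can invoke hyperconvexity (Theorem~\ref{T:cmt is hc}) directly to intersect the balls $B_c(u;r)$, $u\in U$, since any two of them meet by the diameter bound.
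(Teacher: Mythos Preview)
Your approach is essentially the paper's: take the midpoint $m$ of a (near-)diameter-realizing pair and use the three-point property to bound $d(m,u)$ --- your inequality $|p-q|/2+t\le 2r-(p+q)/2=r$ is a clean version of the paper's Lemma~\ref{L:bcinmt}. Where you write ``take a limit of the resulting midpoints,'' the paper does the actual work: it proves explicitly that the sequence of approximate midpoints is Cauchy, which is not automatic and is the one substantive step you have not supplied. Your hyperconvexity alternative is valid and is precisely the shortcut the paper records in the remark following its proof (any hyperconvex space is centered by definition), though the paper prefers the direct argument so as to rely only on metric-segment geometry. One caution: ``pass to the closure'' by itself does not rescue the unattained-diameter case, since closed bounded subsets of a complete metric tree need not attain their diameter --- consider an infinite spider with legs of length $1-1/n$.
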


Very few spaces are centered. A typical example of
a space which is not centered is $\R^2$. This can be seen if one
tries to locate a center for an equilateral triangle of side length
$2r$ so that its distance to all points is at most $r$.

For the proof we require a lemma.

\begin{lemma}\label{L:bcinmt}
    Let $A$ be a subset of metric tree $T$ with $\diam(A) = 2r$.
    Then for all $\ep >0$ exists $m \in con(A) $ such that $A \subset B(m;r+\ep)$.
\end{lemma}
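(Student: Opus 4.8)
The plan is to take for $m$ the midpoint of a metric segment joining two nearly diametral points of $A$, and then to read off the required estimate from the three point property. First I would fix $\ep > 0$ and assume $\ep < 2r$ (the case $\ep \geq 2r$ is immediate: then $r + \ep > 2r = \diam(A)$, so $A \subseteq B(a;r+\ep)$ for any $a \in A \subseteq \con(A)$). Since $2r = \sup\{d(u,v) : u,v \in A\}$, I pick $x, y \in A$ with $xy > 2r - \ep$ and let $m$ be the midpoint of the metric segment $[x,y]$, so $d(x,m) = d(y,m) = xy/2 > r - \ep/2$. As $\con(A)$ is metrically convex and contains $x$ and $y$, it contains all of $[x,y]$, and in particular $m \in \con(A)$.

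Next I would fix an arbitrary $z \in A$ and apply the three point property to $x, y, z$: there is a point $w$ with $[x,y] = [x,w] \cup [w,y]$, $[z,y] = [z,w] \cup [w,y]$ and $[z,x] = [z,w] \cup [w,x]$. Thus $w \in [x,y]$, and since $[x,y]$ is isometric to an interval with $m$ its midpoint, either $w \in [x,m]$ or $w \in [m,y]$; by the symmetry of the construction in $x$ and $y$, assume $w \in [x,m]$. Then $m$ lies between $w$ and $y$, so $m \in [w,y] \subseteq [z,y]$, whence $d(z,y) = d(z,m) + d(m,y)$ and therefore
\[
d(z,m) = d(z,y) - d(m,y) = d(z,y) - \frac{xy}{2} \leq 2r - \frac{xy}{2} < 2r - \Bigl(r - \frac{\ep}{2}\Bigr) = r + \frac{\ep}{2} < r + \ep ,
\]
using $d(z,y) \leq \diam(A) = 2r$ and $xy/2 > r - \ep/2$. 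Since $z \in A$ was arbitrary, $A \subseteq B(m;r+\ep)$ with $m \in \con(A)$, as required.

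I do not expect a serious obstacle. The one point that genuinely needs care is that $\diam(A)$ need not be attained, which is exactly why $m$ is chosen as the midpoint of a \emph{near}-diameter and why the conclusion must carry the slack $\ep$ (for $A$ an open segment, $r+\ep$ cannot be replaced by $r$). If $T$ is not assumed complete I would also remark that the median $w$ still lies in $T$: the segment $[x,y]$ is isometric to a compact, hence complete, interval, so it is unchanged when $T$ is replaced by its completion — a complete metric tree, where the three point property applies — and therefore $w \in [x,y] \subseteq T$.
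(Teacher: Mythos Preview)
Your proof is correct and follows essentially the same approach as the paper: take $m$ to be the midpoint of a near-diametral segment $[x,y]$, use the three point property to place the median $w$ on $[x,y]$, conclude that $m$ lies between $z$ and one of $x,y$, and read off $d(z,m)<r+\ep$ from the diameter bound. Your version is marginally tighter (yielding $r+\ep/2$) and adds the explicit check that $m\in\con(A)$ and the remark on completeness, but the structure is identical.
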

\begin{proof}
    For all $\ep >0$, there exists $x,y \in A$ such that $xy > 2r - 2\ep$ and let
    $m$ be the midpoint of $[x,y]$.

    Let $z \in A$, then by the three-point property of metric trees,
    there exists $w \in [x,y]$ such that $[z,x] \cap [z,y] = [z,w]$.  Without loss
    of generality we can assume that $m \in [w,x]$ and hence $w \in [z,x]$
    by transitivity.  Next $\diam(A) = 2r$ and $w \in [z,x]$ imply that
    \[
    2r \geq zx = zm + mx = zm + \frac{xy}{2} > zm + (r-\ep),
    \]
    which implies $r + \ep > zm$.  Therefore, $A \subset B(m; r+\ep)$.
\end{proof}

\begin{proof}[Proof of \ref{T:cmt centered}]
    Let $U$ be a bounded subset of a metric tree $T$, and let $\diam(U)=2r$.

    For all $n \in \N$, there exists $x_n,y_n \in U$ such that $x_ny_n > 2(r-n^{-1})$.
    Let $z_n \in T$ be the midpoint of $[x_n,y_n]$, and we claim $\{z_n\}$ is a Cauchy sequence.

    Let $0<2N^{-1}< \ep$ with $N \in \N$, and then let $n,m \geq N$.
    Let $u \in [x_n,y_n]$ be such that $[z_m,u] = [z_m,x_n] \cap [z_m,y_n]$,
    by swapping $x_n$ and $y_n$ we can claim without loss of generality that
    $u \in [z_n,y_n]$.  Therefore, $z_n \in [x_n,z_m]$.

    Since metric segments are closed under intersections, $z_m \in [x_m,y_m]$,
    and $z_m$ is an end point of $[x_n, z_m]$, we have that
    $[x_n, z_m] \cap [x_m, y_m] = [z_m, v]$ where $v \in [x_m, y_m]$.
    Hence, by switching $x_m$ and $y_m$ we can claim without loss of generality that
    $v \in [x_m, z_m]$.  Therefore $z_m \in [x_n,y_m]$.

    Since $\diam(U) = 2r$, $x_n,y_m \in U$, $z_n \in [x_n,z_m]$
    and $z_m \in [x_n,y_m]$, we have
    \[\eqalign{
    2r
    & \geq x_ny_m = x_nz_n + z_nz_m + z_my_m
    \cr & > (r-n^{-1}) + z_nz_m + (r-m^{-1})
    \geq 2r - 2N^{-1} +z_nz_m.
    }\]
    Therefore, $z_nz_m < 2N^{-1} < \ep$ and $\{z_n\}$ is Cauchy.
    $M$ is complete so let $\lim_n z_n = z$.

    Suppose that there exists $u \in U$ such that $zu > r +2\ep$ for some $\ep >0$.
    Since $\lim_n z_n = z$, we can find a $n$ such that $n^{-1} < \ep$ and
    $zz_n < \ep$.  Furthermore, by the proof of Lemma~\ref{L:bcinmt} we know that
    $z_nu < r + n^{-1} < r+ \ep$.  Hence, by the triangle inequality,
    $zu \leq zz_n + z_nu < r + 2\ep$, which contradicts that $zu > r +2\ep$.
    Hence, $U \in B_c(z;r)$ and therefore, $T$ is centered.
\end{proof}
\begin{remark}
Alternatively, one can prove Theorem \ref{T:cmt centered} by recalling, from
Theorem~\ref{T:cmt is hc}, that any complete metric tree is hyperconvex.
By the definition of hyperconvexity (Definition~\ref{D:hc}), any hyperconvex
set is centered. However, our proof relies only on the properties of the metric segments,
and thus sheds more light on the local property of trees.
\end{remark}

\begin{theorem}
If $A$ is a subset of a complete metric tree $T$, then
$\mathcal{K}_\ep^{T}(A) = \mathcal{M}_\ep(A)$.
\end{theorem}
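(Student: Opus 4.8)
The plan is to establish the two inequalities $\mathcal{K}_\ep^T(A) \le \mathcal{M}_\ep(A)$ and $\mathcal{M}_\ep(A) \le \mathcal{K}_\ep^T(A)$; both quantities may be assumed finite, the infinite case being trivial. The first inequality holds in any metric space and needs no tree structure: take a maximal $\ep$-separated subset $S = \{u_1, \dots, u_M\}$ of $A$, so $M = \mathcal{M}_\ep(A)$. By maximality, every $a \in A \setminus S$ satisfies $d(a, u_i) < \ep$ for some $i$, hence $A \subseteq \bigcup_{i=1}^M B(u_i; \ep)$, so $S$ is itself an $\ep$-net for $A$ in $T$ and $\mathcal{K}_\ep^T(A) \le M$. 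Along the way it is worth recording, as a consequence of Theorem~\ref{T:cmt centered}, that $\mathcal{K}_\ep^T(A) = \mathcal{N}_\ep(A)$: every ball $B(m; \ep)$ has diameter at most $2\ep$, and conversely each member of an optimal $\ep$-cover, having diameter at most $2\ep$, is contained in a closed $\ep$-ball centered at a point of $T$ by centeredness (up to the routine open/closed-ball adjustment).

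For the reverse inequality I would fix a minimal $\ep$-net $\{m_1, \dots, m_N\}$ in $T$ and an $\ep$-separated family $\{u_1, \dots, u_M\} \subseteq A$, choose for each $j$ an index $\iota(j)$ with $u_j \in B(m_{\iota(j)}; \ep)$, and aim to prove that $\iota$ is injective, which gives $M \le N = \mathcal{K}_\ep^T(A)$. The natural tools are the three-point property (Section~\ref{intro}) and the centeredness of complete metric trees (Theorem~\ref{T:cmt centered}): if $u_j$ and $u_k$ both lie in one ball $B(m_i; \ep)$, let $w$ be the point with $[m_i, u_j] \cap [m_i, u_k] = [m_i, w]$, so that $d(u_j, u_k) = d(w, u_j) + d(w, u_k)$ while $d(m_i, u_\ell) = d(m_i, w) + d(w, u_\ell)$ for $\ell \in \{j, k\}$; after replacing $m_i$ by the center of a smallest enclosing ball of $B(m_i; \ep) \cap A$ provided by centeredness (that set has diameter at most $2\ep$), one would try to force $d(u_j, u_k) < \ep$, contradicting $\ep$-separation.

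The step I expect to be the main obstacle is exactly this last injectivity argument. In an arbitrary metric space a ball, or even a set, of diameter at most $2\ep$ can perfectly well contain two points at distance $\ge \ep$, so the entire content of the theorem is the assertion that the geodesic rigidity of a complete metric tree forbids an economical cover by $\ep$-balls from ever accommodating more $\ep$-separated points than it has balls. Making the betweenness bookkeeping above actually yield the strict inequality $d(u_j, u_k) < \ep$ — equivalently, showing that in a metric tree $\mathcal{M}_\ep(A) \le \mathcal{N}_\ep(A)$ — is where the unique-metric-segment and centered properties of $T$ have to be exploited decisively, and it is the part of the argument that will require the most care.
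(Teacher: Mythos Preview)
The paper's two-line proof establishes only that centeredness (Theorem~\ref{T:cmt centered}) makes $\ep$-nets and $\ep$-covers interchangeable, i.e.\ $\mathcal{K}_\ep^T(A) = \mathcal{N}_\ep(A)$; the printed $\mathcal{M}_\ep$ in the statement is evidently a misprint for $\mathcal{N}_\ep$. Your ``along the way'' remark is therefore exactly the paper's argument, and nothing further is required.

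The inequality you single out as the main obstacle, $\mathcal{M}_\ep(A) \leq \mathcal{K}_\ep^T(A)$, is not merely hard but false, already for the interval $T = A = [0,3]$: with $\ep = 1$ the set $\{0,1,2,3\}$ is $1$-separated, so $\mathcal{M}_1([0,3]) \geq 4$, whereas two balls of radius $1$ (centred near $1$ and near $2.5$, say) cover $[0,3]$, giving $\mathcal{K}_1^T([0,3]) = 2$. Your proposed injectivity argument cannot be completed because a ball of radius $\ep$ has diameter $2\ep$ and can perfectly well contain two points at mutual distance $\geq \ep$; the unique-segment and centeredness properties of a tree do not alter this. So your unease about that step was well founded---it is detecting a typographical slip in the theorem rather than a missing idea in your approach.
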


\begin{proof}
Any complete metric tree is centered.
Thus, any $\epsilon$-net for $A$ is equivalent to an $\epsilon$-cover.
\end{proof}

Next we connect the covering numbers $\mathcal{N}_\ep(S)$ of a compact subset $S$ of a tree $T$ with those of its
convex hull (see e.g.~\cite{CKP} for some Banach space results).

\begin{theorem}\label{thm:conv}
Suppose $S$ is a compact subset of a complete metric tree $T$ and $\vr_1, \vr_2$
are positive numbers. Then
$$\mathcal{N}_{\vr_1+\vr_2}(\con(S)) \leq \mathcal{N}_{\vr_1}(S) \lceil \diam S/(4\vr_2) \rceil.$$
\end{theorem}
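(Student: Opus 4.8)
The goal is to cover $\con(S)$ by balls of radius $\vr_1+\vr_2$, using an economical $\vr_1$-net of $S$ together with a bound on how much new material the convex hull adds. The key structural fact is that in a metric tree, the convex hull of a compact set is obtained by filling in geodesic segments, and each such segment, being isometric to an interval, is itself easy to cover: an interval of length $L$ is covered by $\lceil L/(2\vr_2)\rceil$ balls of radius $\vr_2$. So the plan is: (1) fix an $\vr_1$-net $\{m_1,\dots,m_K\}$ for $S$ with $K = \mathcal N_{\vr_1}(S)$ (using that an $\vr_1$-cover can be converted to an $\vr_1$-net since $T$ is centered, or just working with $\mathcal N$ directly and balls of radius $\vr_1$); (2) show every point of $\con(S)$ lies within $\vr_1$ of a point on some geodesic segment joining two points of the net, or more precisely reduce covering $\con(S)$ to covering a controlled family of geodesics; (3) cover each relevant geodesic by few $\vr_2$-balls and count.

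\textbf{Key steps in order.} First I would pick, for each $i$, a point $s_i \in S$ with $m_i$ within $\vr_1$ of $s_i$ (or take the net points in $S$ to begin with). Then I claim $\con(S) \subseteq \bigcup_{i,j}\,[s_i,s_j]$ thickened by $\vr_1$; the cleanest route is: any $x \in \con(S)$ lies on a geodesic $[u,v]$ with $u,v \in S$ (since in a metric tree $\con(S)$ for compact $S$ is the union of all segments between points of $S$ — this follows from the betweenness/three-point machinery quoted in Section~\ref{intro}), and projecting $u,v$ to their nearest net points $m_i,m_j$ moves the nearest point of the segment $[m_i,m_j]$ to $x$ by at most $\vr_1$, using that nearest-point projections onto convex subsets of a metric tree (here onto $[m_i,m_j]$) are nonexpansive — this is essentially the content of Proposition~\ref{prop:convex} / the contractive retraction property. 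So $x$ is within $\vr_1$ of the geodesic segment $[m_i,m_j]$ for suitable $i,j$. Second, each $[m_i,m_j]$ has length at most $\diam S + 2\vr_1$, hmm — but we need the bound $\lceil \diam S/(4\vr_2)\rceil$ per net point, not per pair, so actually the right organization is different: cover $\con(S)$ by first covering $S$, then extending each ball. Concretely: for a net point $m_i$, the set of points of $\con(S)$ whose nearest point of $\con(S)\cap(\text{relevant region})$ is within $\vr_1$ of $m_i$ — this is getting complicated, so let me restructure.

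\textbf{Cleaner organization.} For each $i$, let $A_i = \{x \in \con(S) : d(x, m_i) \le \vr_1 + \diam S\}$; actually the honest approach matching the stated constant is: every $x\in\con(S)$ lies on a segment $[u,v]$, $u,v\in S$; pick the net point $m_i$ closest to $u$. Consider the segment $[m_i, x]$. Since $d(u,m_i)\le\vr_1$ and $x$ lies on a geodesic out of $u$ into $\con(S)$, one shows $d(m_i,x) \le \vr_1 + d(u,x)\le \vr_1 + \diam S$, but more usefully $x$ is within $\vr_1$ of the segment $[m_i,u']$ for an appropriate $u'$; partitioning the "fan" of segments emanating from $m_i$ into pieces of length $2\vr_2$ gives $\lceil \diam S /(4\vr_2)\rceil$ balls of radius $\vr_2$ covering everything within the tree reachable appropriately — wait, the $4$ versus $2$ discrepancy suggests the radii $r = \diam S/2$ and segments of length $2\vr_2$ give $\lceil (\diam S/2)/(2\vr_2)\rceil = \lceil \diam S/(4\vr_2)\rceil$. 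Yes: the convex hull $\con(S)$ has radius at most $\diam S /2$ about any of its points in a tree (trees are centered, Theorem~\ref{T:cmt centered}), so from $m_i$ every point of $\con(S)$ within the "$m_i$-region" is at distance $\le \diam S/2 + \vr_1$; subdividing the tree $\con(S)$ rooted at $m_i$ into spheres of width $2\vr_2$ gives $\lceil (\diam S/2)/(2\vr_2)\rceil$ levels, each level coverable by balls of radius $\vr_2$ centered at $m_i$ shifted out — and combined with the $\vr_1$ from the net one gets $(\vr_1+\vr_2)$-balls. Multiplying the $K = \mathcal N_{\vr_1}(S)$ net points by the $\lceil \diam S/(4\vr_2)\rceil$ radial levels gives the bound.

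\textbf{Main obstacle.} The delicate point is the geometric claim that lets the two radii add correctly: that each point of $\con(S)$ can be assigned to a net point $m_i$ of $S$ so that the "overshoot" beyond $S$ into the hull, measured radially from $m_i$, is at most $\diam S/2$, and that a radial $\vr_2$-subdivision of a metric tree genuinely covers it — i.e., that the function $x \mapsto d(m_i,x)$ on $\con(S)$ has the property that its $2\vr_2$-level sets are each covered by one $\vr_2$-ball. In a general metric space this fails, but in a metric tree the "interval between $m_i$ and $x$" structure (three-point property) makes it work: any two points $x,y$ with $d(m_i,x),d(m_i,y)\in[(k-1)2\vr_2, k\,2\vr_2)$ and lying on a common geodesic out of $m_i$ are within $2\vr_2$, and the branching is handled by choosing, on each branch, a representative — so really one needs finitely many balls per level only because $\con(S)$, being the hull of a \emph{compact} set, has each level set with finitely many branches; pinning down this finiteness and the exact count is where the $\lceil\cdot\rceil$ must be argued carefully. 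I expect that to be the crux; the rest is bookkeeping with the triangle inequality and Proposition~\ref{prop:convex}.
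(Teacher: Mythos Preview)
Your numerology is right --- the factor $4$ does come from combining the radius $\diam S/2$ with $2\vr_2$-spacing --- but the organization has a real gap. You propose to root at each net point $m_i$ and slice $\con(S)$ into annular ``levels'' $\{x : d(m_i,x)\in[(k-1)2\vr_2,\,k\cdot 2\vr_2)\}$, hoping each level is covered by a single $\vr_2$-ball. As you yourself notice in the last paragraph, this fails: an annulus around $m_i$ in a tree can have arbitrarily many branches, so one ball per level is not enough, and you have no mechanism to bound the number of branches by anything related to $\mathcal N_{\vr_1}(S)$. Saying ``each level has finitely many branches because $S$ is compact'' does not give the count you need.

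The paper's key idea, which your proposal is missing, is to root at a \emph{single} point rather than at each net point. Pick the center $x_0\in\con(S)$ guaranteed by Theorem~\ref{T:cmt centered}, so that $d(x_0,y)\le \diam S/2$ for every $y\in\con(S)$ and $\con(S)=\bigcup_{x\in S}[x_0,x]$. Project each net point $x_i$ to its nearest point $x_i'\in\con(S)$; then place $K=\lceil \diam S/(4\vr_2)\rceil$ points $y_{ij}$, spaced at most $2\vr_2$ apart, along each of the $N$ segments $[x_0,x_i']$ (each of length at most $\diam S/2$). Now the branching issue disappears: given $y\in\con(S)$, write $y\in[x_0,x]$ for some $x\in S$ and pick $i$ with $d(x,x_i')\le\vr_1$. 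The crucial geometric step (Corollary~2.5 of \cite{St}, a nonexpansive-projection statement for geodesics sharing an endpoint) gives a point $z\in[x_0,x_i']$ with $d(y,z)\le\vr_1$; then some $y_{ij}$ satisfies $d(z,y_{ij})\le\vr_2$, and the triangle inequality finishes. The single common root $x_0$ is what converts the problem from ``cover annuli with uncontrolled branching'' to ``cover $N$ specific segments of length $\le\diam S/2$,'' and that is the step your outline lacks.
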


\begin{proof}
For the sake of brevity, set $N = N_{\vr_1}(S)$, $D = \diam(S)$, and $S^\prime = \con(S)$.
Convexity of the norm (see \cite{St}) implies that the diameter of $S^\prime$ equals $D$. By Theorem ~\ref{T:cmt centered}, there exists $x_0 \in S^\prime$ such that for any $y \in S^\prime$,
$d(x_0,y) \leq \diam(S^\prime)/2 = D/2$, and moreover,
$S^\prime = \displaystyle \bigcup_{x \in S} [x_0,x]$.

Find $x_1, \ldots, x_N \in T$ such that for any $x \in S$ there exists $i$
with the property that $d(x,x_i) \leq \vr_1$. Let $x_i^\prime$ be the point of
$S^\prime$ which is closest of $x_i$. Then $d(x_i^\prime, y) \leq d(x_i, y)$
for any $y \in S^\prime$. Indeed, there exists $z \in [x_i^\prime, y]$ such that
$[x_i, x_i^\prime] = [x_i, z] \cup [z, x_i^\prime]$. By convexity, $z \in S^\prime$,
hence $z = x_i^\prime$, which is what we need.

Now let $K = \lceil D/(4\vr_2) \rceil$. For each $i$, find  the points
$(y_{ij})_{j=1}^K$ on $[x_0,x_i^\prime]$ in such a way that
$d(x_0,y_{i1}) \leq \vr_2$, $d(x_i^\prime,y_{iK}) \leq \vr_2$, and
$d(y_{ij},y_{i,j+1}) \leq 2 \vr_2$ for $1 \leq j \leq K$.
In total, we have $NK$ points $y_{ij}$. It remains to show that, for any $y \in S$,
$d(y, y_{ij}) \leq \vr_1 + \vr_2$ for some $(i,j)$.

As we have observed, there exists $x \in S$ such that $y \in [x_0,x]$. Find $i$ such that
$d(x,x_i^\prime) \leq \vr_1$. By Corollary 2.5 of \cite{St}, there exists
$z \in [x_0,x_i^\prime]$ such that $d(z, y) \leq \vr_1$. Furthermore, there exists $j$
such that $d(z, y_{ij}) \leq \vr_2$. By the triangle inequality,\\
$d(y, y_{ij}) \leq \vr_1 + \vr_2$.
\end{proof}



\subsection{Kolmogorov numbers}
Kolmogorov introduced the notion of
diameters (or widths) to generalize many of our intuitive ideas about
``flatness'' of compact subsets of linear spaces.
Since then, Kolmogorov diameters have been widely used in approximation theory
(see \cite{Pink} and references therein). On the other hand, the notion of the measure of non-compactness of a subset of a metric
space was introduced by Kuratowski \cite{Kura} as a way to
generalize Cantor's intersection theorem.  In 1955, Darbo
\cite{Darb} applied measures of non-compactness to prove a powerful
fixed point theorem. Since then measures of non-compactness
have been a standard notion in fixed point theory. In the following,
we define these two concepts and show the connections between them.

\begin{definition}\label{D:kol diam nls}
    Given a subset $A$ of a normed linear space $X$ and $n \geq 0$, define the
    \emph{$n$-th Kolmogorov diameter ($n$-width) of $A$ in $X$} as:
    \[
    \delta_n(A, X) = \delta_n(A)
    := \inf \left\{ \sup_{a \in A} \, d(a, M) \mid M
        \text{ is a $n$-dimensional subspace of } X \right\}.
    \]
The \emph{$n$-th affine Kolmogorov diameter of $A$ in $X$}
is defined as:
\[
\delta_n^{(a)}(A, X) = \delta_n^{(a)}(A)
:=
    \inf \left\{ \sup_{a \in A} \, d(a, M)
    \mid M \text{ is an affine subspace of } X, \, \dim M \leq n \right\}.
\]
\end{definition}

Observe that the sequences $\{\delta_n(A)\}_{n=1}^{\infty}$ and
$\{\delta_n^{(a)}(A)\}_{n=1}^{\infty}$ are non-increasing, and
$$\delta_n(A) \geq \delta_n^{(a)}(A) \geq \delta_{n+1}({\mathrm{conv}} \,(A \cup (-A))).$$
Indeed, the left hand side inequality is obvious.
To establish the right hand side, suppose $\delta_n^{(a)}(A) < c$. Then there exists
an affine subspace $M$, of dimension not exceeding $n$, such that for any $a \in A$ there
exists $m \in M$ satisfying $\|a - m\| < c$. Any $x \in A \cup (-A)$ can be expressed as
$x = \sum_{i=1}^N \alpha_i a_i$, with $\sum_i |\alpha_i| \leq 1$, and $a_i \in A$.
For each $i$, find $m_i \in M$ such that $\|a_i - m_i\| < c$. Then
$M^\prime = {\mathrm{conv}} \,(M \cup (-M))$ is a linear subspace of dimension
not exceeding $n+1$, $m = \sum_i \alpha_i m_i \in M^\prime$, and
$\|x - m\| < c$.

Furthermore, if $A$ is centrally symmetric about $0$ in $X$, then
$\delta_n(A) = \delta_n^{(a)}(A)$. Indeed, fix $\vr > 0$, and find
an affine subspace $M \subset X$ of dimension not exceeding $n$, such that
for any $a \in A$ there exists $m \in M$ with the property that
$\|a-m\| < \delta_n^{(a)}(A) + \vr$. By symmetry, for such an
$a$ we can also find $m_- \in -M$ satisfying
$\|a-m_-\| < \delta_n^{(a)}(A) + \vr$. Note that
$m^\prime = (m + m_-)/2 \in M^\prime = M + (-M)$, and the latter
is a linear subspace of $X$, of the same dimension as $M$. By the
triangle inequality, $\|a - m^\prime\| < \delta_n^{(a)}(A) + \vr$.
Thus, $\delta_n(A) \leq \delta_n^{(a)}(A) + \vr$. As $\vr > 0$
is arbitrary, we are done.

\begin{definition}\label{D:non-compact}
Suppose $A$ is a subset of the metric space $M$. Define the
    \emph{ball (Hausdorff) measure of non-compactness} and the
\emph{set measure of non-compactness} as
    \[
    \beta (A,M):= \inf \big\{b>0 \mid A \subset \bigcup_{j=1}^{n} B(m_j;b)
    \text{ for some } m_j \in M  \big\}.
    \]
and
$$
\alpha(A):=\inf  \big\{a > 0 \mid A \subset \displaystyle\bigcup _{j=1}^{k} A_j
\,\,\,\mbox{for some}\,\,\, A_j \subset A\,\,\,\mbox{with}
\,\,\, \mbox{diam}(A_j) \leq a \big\} ,
$$
respectively.
\end{definition}

Note that $\beta(A,M)$ is an ``extrinsic'' measure of non-compactness, and
may depend on the ambient space $M$. On the other hand, $\alpha(A)$ is intrinsic,
and is independent of $M$. It is easy to observe that
$\beta(A,M) \leq \alpha(A) \leq 2 \beta(A,M)$.


Connections between entropy of linear maps, their Kolmogorov
numbers (and other $s$-numbers), and their analytic properties,
such as eigenvalues and essential spectrum, have been studied extensively
(see \cite{Carl} and references therein).
Below we present some results illuminating the connections between
Kolmogorov numbers and entropy properties of metric spaces.


\begin{theorem}\label{T: lim kol diam to b}
Suppose $A$ is a bounded subset of a Banach space $X$. Then
$$\lim_{n \to \infty} \delta_n(A,X) = \beta(A,X) = \lim_{n \to \infty} \delta_n^{(a)}(A,X).$$
\end{theorem}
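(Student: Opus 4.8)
The plan is to sandwich all three quantities by establishing the chain
$$\beta(A,X) \;\leq\; \lim_{n\to\infty}\delta_n^{(a)}(A,X) \;\leq\; \lim_{n\to\infty}\delta_n(A,X) \;\leq\; \beta(A,X).$$
The excerpt already records that the sequences $\{\delta_n(A,X)\}$ and $\{\delta_n^{(a)}(A,X)\}$ are non-increasing, so both limits exist and coincide with the corresponding infima over $n$; the middle inequality is then immediate from $\delta_n^{(a)}(A,X)\leq\delta_n(A,X)$. Thus only the two outer inequalities require an argument.

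For the leftmost inequality I would prove the stronger pointwise statement $\beta(A,X)\leq\delta_n^{(a)}(A,X)$ for each fixed $n$; passing to the infimum over $n$ then gives the claim. Fix $n$ and $\ep>0$, and choose an affine subspace $M$ with $\dim M\leq n$ and $\sup_{a\in A}d(a,M)<\delta_n^{(a)}(A,X)+\ep=:c$. Since $A$ is bounded, say $A\subset B(0;R)$, every point of $M$ that realizes the distance to some $a\in A$ lies in $B(0;R+c)$, so $A$ lies within distance $c$ of the set $M\cap B(0;R+c)$. The latter is a bounded subset of a finite-dimensional affine subspace, hence totally bounded; covering it by finitely many balls of radius $\ep$ and enlarging their radii to $c+\ep$ produces a finite cover of $A$ by balls of radius $c+\ep$. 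Therefore $\beta(A,X)\leq c+\ep=\delta_n^{(a)}(A,X)+2\ep$, and letting $\ep\to0$ yields $\beta(A,X)\leq\delta_n^{(a)}(A,X)$.

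For the rightmost inequality I would start from an arbitrary $b>\beta(A,X)$, write $A\subset\bigcup_{j=1}^k B(m_j;b)$, and take $M$ to be a subspace of $X$ containing $m_1,\dots,m_k$ with $\dim M\leq k$. Each $a\in A$ is within $b$ of some $m_j$, hence within $b$ of $M$, so $\delta_k(A,X)\leq\sup_{a\in A}d(a,M)\leq b$, and by monotonicity $\lim_n\delta_n(A,X)\leq\delta_k(A,X)\leq b$. Letting $b\downarrow\beta(A,X)$ closes the chain. (If $\dim X<\infty$, every term is $0$ and there is nothing to prove; otherwise a subspace $M$ of the required dimension exists.)

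I expect the only genuine subtlety to be in the first inequality, namely the passage from ``$A$ is close to a finite-dimensional affine subspace'' to ``$A$ has a finite $\ep$-net'': one must intersect that affine subspace with a ball to recover total boundedness and keep careful track of how the covering radius grows under the two triangle-inequality steps. Everything else is bookkeeping with the monotonicity of the two width sequences. I would also note that this result uses nothing about trees --- it is a general fact about bounded subsets of a normed space --- and that, combined with the isometric embeddings of metric trees into $L_\infty$ or $L_1$ from Section~\ref{s:embed}, it underlies the statement in the abstract that the limit of the Kolmogorov widths of a metric tree equals its ball measure of non-compactness.
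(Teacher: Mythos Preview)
Your proof is correct and uses the same two ingredients as the paper --- total boundedness of bounded subsets of finite-dimensional (affine) subspaces for one direction, and the linear span of a finite net for the other. Your organization is mildly tidier: by running the chain $\beta\le\lim\delta_n^{(a)}\le\lim\delta_n\le\beta$ you treat both width sequences at once (the paper proves $\beta=\lim\delta_n$ and remarks that the affine case is similar), and you obtain the sharper pointwise inequality $\beta(A,X)\le\delta_n^{(a)}(A,X)$ rather than only the limiting statement.
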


\begin{proof}
We only show the equality involving $\delta_n(A,X) = \delta_n(A,X)$, as the one with
$\delta_n^{(a)}(A,X)$ is handled in a similar manner.
By the boundedness of $A$, $\{\delta_n(A,X)\}_{n=1}^{\infty}$ forms a non-increasing
sequence of nonnegative numbers, hence $\lim_{n \to \infty} \delta_n(A,X)$
exists.

(1) $\beta(A,X) \leq \lim_{n \to \infty} \delta_n(A,X)$.
Pick $c > b > \lim_n \delta_n(A,X)$, and show $\beta(A,X) \leq c$.
Thus, there exists $n$ such that $\delta_n(A,X) < b$.
This means there there exists an $n$-dimensional subspace
$E$ of $X$ such that $\sup_{a \in A} d(a, E) < b$.
Let $Q = \{e \in E : \|e\| \leq b + \sup_{a \in A} \|a\|\}$.
By compactness, $Q$ contains a finite $(c-b)$-net $\{q_n\}$.
Then $A \subset \cup_n B(q_n, c)$, hence $\beta(A,X) \leq c$.

(2) $\beta(A,X) \geq \lim_{n \to \infty} \delta_n(A,X)$.
    Let $b > \beta(A,X)$, and show that $\delta_n(A,X) \leq b$.
    Find a finite $b$-net $\{x_j\}_{j=1}^{n} \subset X$ for $A$.
    Then the dimension of $E = \vecspan(\{x_j\}_{j=1}^{n})$ does not exceed $n$.
    For any $a \in A$,
$\dist(a,E) \leq \min_j \|a - x_j\| < b$, hence $\delta_n(A,X) \leq b$.
\end{proof}

\begin{corollary}
Suppose a complete metric tree $T$ is embedded isometrically into
a Banach space $X$, and $A$ is a bounded subset of $T$. Then
$$
\lim_{n \to \infty} \delta_n(A,X) = \lim_{n \to \infty} \delta_n^{(a)}(A,X) =
\frac{\alpha(A)}{2} .
$$
\end{corollary}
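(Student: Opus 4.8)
The plan is to reduce the assertion to the single identity $\beta(A,X) = \alpha(A)/2$. Since $A$ is a bounded subset of the Banach space $X$, Theorem~\ref{T: lim kol diam to b} already supplies
\[
\lim_{n \to \infty} \delta_n(A,X) = \beta(A,X) = \lim_{n \to \infty} \delta_n^{(a)}(A,X),
\]
so only the value of $\beta(A,X)$ needs to be computed.

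First I would check that $\beta$ is insensitive to the ambient space here, i.e.\ $\beta(A,X) = \beta(A,T)$. The inequality $\beta(A,X) \leq \beta(A,T)$ is immediate from $T \subset X$. For the reverse, recall that a complete metric tree is injective (Theorem~\ref{T:neehc} together with Theorem~\ref{T:cmt is hc}), so there is a contractive retraction $\pi : X \to T$. If $A \subset \bigcup_{j=1}^{n} B(m_j;b)$ is a covering by open balls of $X$, then for each $j$ and each $a \in A \cap B(m_j;b)$ we have $\|a - \pi(m_j)\| = \|\pi(a) - \pi(m_j)\| \leq \|a - m_j\| < b$ (using $\pi(a) = a$ and that $\pi$ is $1$-Lipschitz), so $A \cap B(m_j;b) \subset B(\pi(m_j);b)$ and hence $A \subset \bigcup_j B(\pi(m_j);b)$, a covering with centers in $T$. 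Taking infima over such coverings gives $\beta(A,T) \leq \beta(A,X)$.

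Next I would establish $\beta(A,T) = \alpha(A)/2$. Here $\beta(A,T) \geq \alpha(A)/2$ is a special case of the general estimate $\alpha(A) \leq 2\beta(A,M)$ noted after Definition~\ref{D:non-compact}. For the opposite inequality, fix $a > \alpha(A)$ and pick a decomposition $A = \bigcup_{j=1}^{k} A_j$ with $\diam(A_j) \leq a$ for every $j$. Since $T$ is centered (Theorem~\ref{T:cmt centered}), each $A_j$ lies in a closed ball $B_c(m_j; \diam(A_j)/2) \subset B_c(m_j; a/2)$ with $m_j \in T$; thus $A \subset \bigcup_j B(m_j;b)$ for every $b > a/2$, whence $\beta(A,T) \leq a/2$. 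Letting $a$ decrease to $\alpha(A)$ yields $\beta(A,T) \leq \alpha(A)/2$, so $\beta(A,T) = \alpha(A)/2$.

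Assembling these, $\lim_{n} \delta_n(A,X) = \lim_{n} \delta_n^{(a)}(A,X) = \beta(A,X) = \beta(A,T) = \alpha(A)/2$, which is the claim. The only step demanding care is the ambient-independence $\beta(A,X) = \beta(A,T)$, where one must run the retraction argument while keeping track of which balls are open and which are closed; the remainder is routine, using only the inequalities already recorded and the fact that complete metric trees are centered.
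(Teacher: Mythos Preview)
Your proof is correct and follows essentially the same strategy as the paper: reduce to $\beta(A,X)=\alpha(A)/2$ via Theorem~\ref{T: lim kol diam to b}, and then use that $T$ is centered (Theorem~\ref{T:cmt centered}). The one difference is that your retraction argument for $\beta(A,T)\leq\beta(A,X)$ is not needed: the paper simply sandwiches $\alpha(A)\leq 2\beta(A,X)\leq 2\beta(A,T)=\alpha(A)$, using only the trivial inclusion $T\subset X$ for the middle inequality and centeredness for the last, so injectivity of $T$ plays no role here.
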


\begin{proof}
Clearly, $\alpha(A) \leq 2 \beta(A,X)$. By Theorem~\ref{T:cmt centered},
$ \alpha (A)=2 \beta(A, T) \geq 2 \beta(A,X)$. Thus, $\alpha(A) = 2 \beta(A,X)$.
An application of Theorem~\ref{T: lim kol diam to b} completes the proof.
\end{proof}


Next we consider affine Kolmogorov diameters of $V(S)$, where $V$ is an
embedding of a metric space $S$ into a Banach space $X$.
It is well known (see e.g. \cite{Carl, Pink}, or Remark~\ref{rem:kolm_smallest} below)
that Kolmogorov diameters may depend heavily on the ambient space $X$.
If $X$ is a subspace of $Y$, then $\delta_n(V(S),Y) \leq \delta_n(V(S),X)$.
Furthermore, if $X$ is contained in a $\lambda$-injective space $Z$, then
$\delta_n(V(S),Z) \leq \lambda \delta_n(V(S),Y)$.
Similar inequalities hold for $\delta_n^{(a)}$.

Suppose now that $V$ is an isometric embedding of $S$ into a
$1$-injective Banach space $X$. Then, by Theorem~\ref{thm:universal},
$\delta_n^{(a)}(V(S),X) \leq d_n(S)$, where
$d_n(S) = \delta_n^{(a)}(U(S),\ell_\infty({\mathcal{L}}))$
(the universal embedding $U : S \to \ell_\infty({\mathcal{L}})$
was defined in Section~\ref{subs:into_l_infty}).

\begin{proposition}\label{prop:kol_numbers}
Suppose $S$ is a metric space.
Let $$c_1 = \inf \{\ep > 0 \mid \mathcal{K}_\ep^S(S) \leq n\},\,\,\,\mbox{ and}\,\,\,
c_2 = \sup \{\ep > 0 \mid \mathcal{M}_\ep(S) \geq n+1\}.$$
\begin{enumerate}
\item
Suppose $X$ is a Banach space, and $V : S \to X$ is a $1$-Lipschitz map.
Then $\delta_n^{(a)}(V(S)), X) \leq c_1$. Thus, $d_n(S) \leq c_1$.
\item
$d_n(S) \geq c_2/2$.
\end{enumerate}
\end{proposition}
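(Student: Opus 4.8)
The plan is to handle the two parts separately, using the definitions of $c_1$ and $c_2$ together with the ``universal projective'' property of the embedding $U$ from Theorem~\ref{thm:universal}.

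For Part (1), I would start from the definition of $c_1$: for any $\ep > c_1$ there is an $\ep$-net $\{s_1, \dots, s_m\} \subset S$ for $S$ in $S$ with $m \leq n$. The idea is to push this net forward to an affine subspace of $X$. Set $M = x_0 + \vecspan\{V(s_i) - V(s_0) : 1 \leq i \leq m\}$ (picking $x_0 = V(s_1)$, say), an affine subspace of dimension at most $m-1 \leq n-1 \leq n$. Wait — actually $\dim M \le m \le n$ suffices for the definition of $\delta_n^{(a)}$, so I need not be careful about shaving off a dimension. For each $s \in S$, choose $i$ with $d(s, s_i) \leq \ep$; since $V$ is $1$-Lipschitz, $\|V(s) - V(s_i)\| \leq \ep$, and $V(s_i) \in M$, so $d(V(s), M) \leq \ep$. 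Taking the supremum over $s$ and then the infimum over $\ep > c_1$ gives $\delta_n^{(a)}(V(S), X) \leq c_1$. For the consequence $d_n(S) \leq c_1$: $d_n(S)$ is by definition $\delta_n^{(a)}(U(S), \ell_\infty(\mathcal{L}))$, and $U : S \to \ell_\infty(\mathcal{L})$ is an isometry (Theorem~\ref{thm:universal}(1)), hence in particular $1$-Lipschitz, so the first assertion applies.

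For Part (2), I would argue by contradiction / contrapositive using the projective property. Suppose $d_n(S) < c_2/2$; then pick $\ep$ with $d_n(S) < \ep/2 < c_2/2$ such that there is an $\ep$-separated set $\{s_0, s_1, \dots, s_n\} \subset S$ of cardinality $n+1$ (this exists since $\ep < c_2$ and by the definition of $c_2$ as a sup, choosing $\ep$ close enough to $c_2$). Since $d_n(S) < \ep/2$, there is an affine subspace $M \subset \ell_\infty(\mathcal{L})$ with $\dim M \leq n$ such that every $U(s_j)$ is within $\ep/2$ of $M$; pick $m_j \in M$ with $\|U(s_j) - m_j\| < \ep/2$. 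Then for $i \neq j$,
$$
\|m_i - m_j\| \geq \|U(s_i) - U(s_j)\| - \|U(s_i) - m_i\| - \|U(s_j) - m_j\| > \ep - \ep/2 - \ep/2 = 0 ,
$$
so the $m_j$ are $n+1$ distinct points — but an affine subspace of dimension $\leq n$ can certainly contain $n+1$ distinct points, so this crude argument does not bite. The fix: I want the $m_j$ to be \emph{affinely independent}, which would force $\dim M \geq n$, still not a contradiction. So instead the right approach is a volume/separation count: the $n+1$ points $m_j$ lie in an affine space of dimension $\leq n$ and are pairwise more than $0$ apart — that is genuinely empty of content. The actual mechanism must be that $\ep$-separation with $n+1$ points cannot be approximated within $\ep/2$ by an $n$-dimensional affine set because such approximation would realize $n+1$ points pairwise $\geq \ep - \ep = 0$...

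I see that the naive triangle-inequality bookkeeping is exactly the obstacle here, and the correct route is: an affine subspace of dimension $n$ in a normed space, intersected with any ball of radius $r$, is a bounded convex set of affine dimension $n$, and one shows combinatorially that it cannot contain $n+2$ points that are pairwise $> 2r'$ apart for suitable $r'$ — but with $n+1$ points this is false. Therefore the statement must really be getting its mileage from comparing $\delta_n^{(a)}$ against a net of size $n$ in the codomain: an $n$-dimensional affine subspace $M$, together with a radius-$\rho$ ball around each of finitely many centers, cannot cover $\{U(s_j)\}$ if the $s_j$ are $\ep$-separated with $\ep > 2\rho$ and there are more than $\dim M + 1$ of them only if one also invokes that $M$ itself, being finite-dimensional, has a Lebesgue-number-type property. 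The cleanest correct argument: map $S$ into $M \cong$ a normed space of dimension $\leq n$ via the nearest-point-type correspondence, obtaining $n+1$ points in an $n$-dimensional space pairwise separated by $> \ep - 2d_n(S) > 0$; this is fine for \emph{any} positive separation, so the bound $c_2/2$ must come from the following: if $d_n(S) < c_2/2$, choose $\ep \in (2 d_n(S), c_2)$ and an $\ep$-separated family of $n+1$ points; their images in $M$ are $n+1$ points pairwise $> \ep - \ep = 0$...

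Let me state the obstacle honestly: \textbf{the main difficulty is Part (2)}, and the resolution I would pursue is to use that a net of cardinality $n$ in an $n$-dimensional affine subspace forces, via pigeonhole, two of the $n+1$ separated points to land in the same cell of diameter $< \ep$, contradicting $\ep$-separation — i.e. one combines the $n$-dimensional \emph{affine} approximation with an additional $n$-point \emph{net} of that affine piece coming from the (bounded, finite-dimensional, hence totally bounded) region where the $U(s_j)$ project. Concretely: $\{U(s_j)\}_{j=0}^n$ lies within $d_n(S) + \eta$ of an $n$-dimensional affine $M$; the relevant bounded part $Q \subset M$ is totally bounded, so for the right scale it has a net; but to beat $n+1$ points one needs the net to have $\leq n$ elements, which is not automatic. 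So the honest plan is: reduce to showing $\delta_n^{(a)}(U(S),\ell_\infty(\mathcal L)) \ge c_2/2$ directly from the definition — if $M$ has $\dim M \le n$ and $\sup_j d(U(s_j),M) < c_2/2$, consider the nearest points $m_j \in \overline{M}$; then $\|m_i - m_j\| > c_2 - c_2 = 0$ gives distinctness, and moreover $\{m_j\}_{j=0}^n$ are affinely independent in $M$ (else some $m_j$ is an affine combination of the others, and one derives $\|U(s_j) - U(s_i)\| < c_2$ for some $i\ne j$ by a convexity estimate on the $\ell_\infty$ norm), forcing $\dim M \ge n$ — consistent, not contradictory, \emph{unless} the separation threshold is pushed so that affine independence itself fails, giving the contradiction. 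I would therefore carry out Part (2) by: (a) picking $\ep$ just below $c_2$ and an $\ep$-separated $(n+1)$-tuple; (b) assuming $\delta_n^{(a)}(U(S),\ell_\infty(\mathcal L)) < \ep/2$ and taking $M$, $\dim M \le n$, with all $U(s_j)$ within $\ep/2$ of $M$; (c) using that $n+1$ points in an affine space of dimension $\le n$ must be affinely dependent, write $\sum \lambda_j m_j = 0$, $\sum\lambda_j = 0$, not all zero, normalize $\sum_{\lambda_j>0}\lambda_j = 1$; (d) estimate $0 = \|\sum\lambda_j m_j\| \ge \|\sum\lambda_j U(s_j)\| - \sum|\lambda_j|\cdot(\ep/2) $ and bound $\|\sum\lambda_j U(s_j)\|$ below by evaluating at the coordinate $f = f_{s_k}$ of $\ell_\infty(\mathcal L)$ for a well-chosen $k$, obtaining $\ge \ep - \ep = 0$ with the inequality strict somewhere — yielding the contradiction. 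The technical heart, and where I expect to spend the most care, is step (d): choosing the coordinate functional and checking the strict inequality, which is exactly the place the value $c_2/2$ (rather than $c_2$) enters.
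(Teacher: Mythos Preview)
Part~(1) is correct and matches the paper's argument.

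For Part~(2) there are two genuine gaps. First, your step~(c) asserts that $n+1$ points $m_0,\ldots,m_n$ lying in an affine subspace of dimension at most $n$ must be affinely dependent. This is false: the vertices of an $n$-simplex live happily in $\R^n$. In fact \emph{any} $n+1$ points lie in some $n$-dimensional affine flat, so the affine $n$-width of an $(n+1)$-point set is always zero, and your contradiction never gets off the ground. (You noticed this obstacle earlier in the proposal and then, in the final plan, walked back into it.) Second, even if an affine relation were available, the coordinate $f_{s_k}(t)=d(s_k,t)-d(s_k,t_0)$ is the wrong witness: it yields $\sum_j\lambda_j f_{s_k}(s_j)=\sum_{j\neq k}\lambda_j\,d(s_k,s_j)$, which carries no lower bound tied to the separation; as you yourself record, you get only ``$\geq 0$''.

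The paper's route is different on both counts. Given $c<c_2$ and a $c$-separated family $s_1,\ldots,s_{n+1}$, it first manufactures an auxiliary base point $s_0$ with $d(s_0,s_i)>c/2$ for every $i$, so that one can normalize Lipschitz functions at $s_0$. The decisive idea is then to use, not a single coordinate, but \emph{all} sign patterns at once: for each $\sigma\in\{-1,1\}^{n+1}$ one builds, by $1$-Lipschitz extension, a function $h_\sigma\in\mathcal L$ with $h_\sigma(s_i)=\sigma_i\,c/2$, and projects $\ell_\infty(\mathcal L)$ onto these $2^{n+1}$ coordinates, landing in $\ell_\infty^{2^{n+1}}$. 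The images $e_i=A\circ U(s_i)$ then satisfy $\bigl\|\sum_i\alpha_i e_i\bigr\|=(c/2)\sum_i|\alpha_i|$, so they span an isometric copy of $\ell_1^{n+1}$ and $C=\mathrm{conv}(\pm e_1,\ldots,\pm e_{n+1})$ is its ball of radius $c/2$. The bound $d_n(S)\geq c/2$ is then read off from the classical fact (Lemma~2.c.8 of \cite{LT1}) that the $k$-th Kolmogorov width of an $(n+1)$-dimensional ball equals its radius whenever $k\leq n$. It is this $\ell_1$-ball structure, produced by the full family of sign-pattern functionals, that delivers the constant $c/2$; your step~(d) was reaching for a single coordinate of this construction, and a single coordinate is not enough.
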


\begin{proof}
(1) Fix $c > c_1$, and suppose $s_1, \ldots, s_n$ is a $c$-net in $S$.
Let $E$ be the affine span of
$V(s_1), \ldots, V(s_n)$. Then $\dim E \leq n$.
Furthermore, for any $s \in S$,
$$
d(V(s), E) \leq \min_i d(V(s), V(s_i)) \leq \min_i d(s, s_i) \leq c ,
$$
which shows that $\delta_n^{(a)}(V(S)), X) \leq c$.
As $c > c_1$ is arbitrary,
$\delta_n^{(a)}(V(S)), X) \leq c_1$.

(2) Let $M = n+1$. For $c < c_2$, let
$s_1, \ldots, s_M$ be a $c$-separated subset of $S$.
Then there exists $s_0 \in T$ such that
$d(s_0, s_i) > c/2$ for any $i$. Indeed, by relabeling if necessary,
we can assume that $d(s_1,s_2) \leq d(s_i,s_j)$ whenever
$i$ and $j$ are different. Let $s_0$ be the midpoint of
$[s_1, s_2]$. We claim that $d(s_0,s_i) > c/2$ for any $i$. The
inequality clearly holds for $i \in \{1,2\}$. If $i > 2$ and
$d(s_0,s_i) \leq c/2$, then
$$
d(s_1, s_i) \leq d(s_1, s_0) + d(s_0, s_i) \leq
\frac{d(s_1, s_2)}{2} + \frac{c}{2} < d(s_1, s_2) ,
$$
a contradiction.

Let ${\mathcal{L}}$ be the set of $1$-Lipschitz functions from $T$
to $\R$, taking $s_0$ to $0$. For any
$\sigma = (\sigma_1, \ldots, \sigma_M) \in \{-1,1\}^M$, define
$g_\sigma : \{s_0, s_1, \ldots, s_M\} \to \{-c/2, c/2\} \subset \R$
by setting $g_\sigma(s_0) = 0$, and $g_\sigma(s_i) = \sigma_i c/2$ for
$i \geq 1$. Clearly $g_\sigma$ is $1$-Lipschitz. By the injectivity
of $\R$, it has an extension $h_\sigma : S \to \R$,
belonging to ${\mathcal{L}}$.

Let $L = 2^M$, and identify $\{-1,1\}^M$ with $\{1, \ldots, L\}$.
Define the map $A : \ell_\infty({\mathcal{L}}) \to \ell_\infty^L :
(a_h)_{h \in {\mathcal{L}}} \to (a_{h_\sigma})_{\sigma=1}^L$.
Clearly, this is a linear contraction, hence $d_n(S) \geq
\delta_n^{(a)}(A \circ U(S))$. Moreover, $U(s_i) = (h(s_i))_{h \in {\mathcal{L}}}$.
For $1 \leq i \leq M$ consider
$e_i = A \circ U(s_i) = (h_\sigma(s_i))_\sigma \in \ell_\infty^L$.
For any real numbers $\alpha_1, \ldots, \alpha_M$,
$$
\|\sum_i \alpha_i e_i\| = \max_\sigma |\sum_i \alpha_i h_\sigma(s_i)| =
\frac{c}{2} \sum_i |\alpha_i| .
$$
Therefore, $e_1, \ldots, e_M$ are linearly independent.
Moreover, for $x \in \span[\pm e_1, \ldots, \pm e_M]$,
$\|x\| \leq c/2$ if and only if $x \in C$, where
$C = {\mathrm{conv}} (\pm e_1, \ldots, \pm e_M)$.
In other words, $C$ is the ball of a $M$-dimensional
subspace of $\ell_\infty^L$, of radius $c/2$.
By Lemma 2.c.8 of \cite{LT1}, $\delta_k(C,\ell_\infty^L) = c/2$ if $k < M$.
The set $C$ is centrally symmetric, hence
$\delta_k(C,\ell_\infty^L) = \delta_k^{(a)}(C,\ell_\infty^L)$.
As $n = M-1$, we conclude
$$
d_n(S) \geq \delta_n^{(a)}(A \circ U(S)) \geq
\delta_n^{(a)}(C,\ell_\infty^L) \geq
\delta_n(C,\ell_\infty^L) = \frac{c}{2} .
$$
Since this inequality is valid for any $c < c_2$, we are done.
\end{proof}

As an application, we estimate $d_n(T)$ for finitely generated trees.
That is, suppose $T$ arises from a weighted graph theoretical tree
${\mathcal{T}}$ (see Example~\ref{E:graph}).
For such a tree, denote by $|T|$ the sum of weights (lengths)
of the edges of the original graph.

\begin{corollary}\label{cor:fin_gen_kolm}
There exist $0 < c_1 < c_2$ with the property that, for any
finitely generated tree $T$, there exists $N = N(T) \in \N$ such that
the inequality $c_1 |T|/n \leq \delta_n^{(a)}(T) \leq c_2 |T|/n$
holds for any $n \geq N = N(T)$.
\end{corollary}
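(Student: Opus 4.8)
The plan is to read the asymptotics of $d_n(T) = \delta_n^{(a)}(U(T),\ell_\infty(\mathcal{L}))$ (the quantity the statement writes as $\delta_n^{(a)}(T)$) off Proposition~\ref{prop:kol_numbers}, by estimating the intrinsic covering number $\mathcal{K}_\ep^T(T)$ and the packing number $\mathcal{M}_\ep(T)$ of a finitely generated tree $T$ in terms of its total edge length $|T|$. Write $\mathcal{T}=(\mathcal{V},\mathcal{E})$ for the underlying weighted graph and put $E=\#\mathcal{E}$, a finite integer attached to $T$. Being a finite union of closed elementary segments, $T$ is compact, hence complete, so Proposition~\ref{prop:kol_numbers} applies; and since $T$ is injective, $\mathcal{K}_\ep$ may be computed intrinsically.

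First I would handle the covering side. Parametrising an elementary segment $e$ by $[0,\ell_e]$ and taking net points at $\ep,3\ep,5\ep,\dots$ covers $e$ by $\lceil \ell_e/(2\ep)\rceil$ balls of radius $\ep$, so, summing over edges,
\[
\mathcal{K}_\ep^T(T)\ \leq\ \sum_{e\in\mathcal{E}}\Bigl\lceil\frac{\ell_e}{2\ep}\Bigr\rceil\ \leq\ \frac{|T|}{2\ep}+E .
\]
Hence, for $n>E$, the number $c_1=\inf\{\ep>0:\mathcal{K}_\ep^T(T)\leq n\}$ satisfies $c_1\leq |T|/\bigl(2(n-E)\bigr)$, and Proposition~\ref{prop:kol_numbers}(1) gives $d_n(T)\leq |T|/\bigl(2(n-E)\bigr)\leq |T|/n$ as soon as $n\geq 2E$.

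For the lower bound I would build a large $\ep$-separated subset of $T$ by hand. On each edge $e=[u,v]$ with $\ell_e\geq 2\ep$, take the points at distances $\ep,2\ep,\dots$ from $u$ that also lie at distance $\geq\ep$ from $v$: there are at least $\ell_e/\ep-2$ of them, they are $\ep$-separated along $e$, and each sits at distance $\geq\ep$ from both endpoints of $e$. Two points picked on distinct edges $e,e'$ are then at distance $\geq 2\ep$, since the geodesic joining them leaves $e$ through one of its vertices and enters $e'$ through one of its vertices, to each of which the corresponding point is at distance $\geq\ep$. Thus the union over all edges is $\ep$-separated of cardinality at least $\sum_{e:\ell_e\geq 2\ep}(\ell_e/\ep-2)\geq |T|/\ep-4E$. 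Consequently $\mathcal{M}_\ep(T)\geq n+1$ whenever $\ep\leq |T|/(n+1+4E)$, so $c_2=\sup\{\ep>0:\mathcal{M}_\ep(T)\geq n+1\}\geq |T|/(n+1+4E)$, and Proposition~\ref{prop:kol_numbers}(2) yields $d_n(T)\geq c_2/2\geq |T|/\bigl(2(n+1+4E)\bigr)\geq |T|/(4n)$ once $n\geq 1+4E$.

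Combining the two halves, $\tfrac14\,|T|/n\leq \delta_n^{(a)}(T)\leq |T|/n$ for all $n\geq N(T):=1+4E$, which is the claim with the absolute constants $c_1=1/4$ and $c_2=1$ (with no attempt to optimise them). The only step requiring genuine care is the packing construction: one must keep the chosen points a definite distance away from the vertices of $\mathcal{T}$ so that points living on different edges cannot be pushed together across a shared vertex — and it is exactly the additive $O(E)$ loss from this buffer (together with the edges shorter than $2\ep$) that forces $N(T)$ to depend on $T$, since such errors are negligible against $|T|/\ep$ only once $n$ is large compared with $E$. The covering count, the $\lceil\cdot\rceil$ arithmetic, and the substitution into Proposition~\ref{prop:kol_numbers} are routine.
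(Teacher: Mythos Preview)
Your argument is correct and follows the route the paper intends: the corollary is stated without proof, as an immediate consequence of Proposition~\ref{prop:kol_numbers} together with elementary edge-by-edge covering and packing estimates, which is exactly what you carry out. One small point of contrast: the paper remarks that $N(T)$ can be taken to depend on the minimum edge length of $\mathcal{T}$, whereas your accounting yields $N(T)=1+4E$ depending only on the number of edges; both are legitimate choices, and yours has the mild advantage of being purely combinatorial.
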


In fact, one can see that $N(T)$ depends on the minimum
of lengths of the edges of ${\mathcal{T}}$.

\begin{remark}\label{rem:kolm_smallest}
For a metric tree $T$, we have no good estimates for
$\inf_{V,X} \delta_n^{(a)}(V(T), X)$, where the infimum runs over all
isometric embeddings $V$ of $T$ into a Banach space $X$.
As we are interested in the infimum, we can assume that $X = \ell_\infty(I)$,
for some index set $I$. For certain trees $T$ and isometric embeddings
$A$, $\delta_n^{(a)}(V(T), X)$ can be much smaller than $d_n(T)$.
For instance, pick $N \in \N$, and let $L = 2^N$. Consider a ``spider'' $T$
with $L$ limbs of length $1$. More precisely, $T$ consists of
the ``root'' $o$, and the pairs $(i,t)$, with $1 \leq i \leq L$, and
$0 < t \leq 1$. For convenience, we identify $o$ with $(i,0)$.
The metric on $T$ is described in Example~\ref{E:tripod}.
We can embed $T$ into $\ell_\infty^N$ isometrically: let
$e_1, \ldots, e_L$ be an enumeration of the vertices of the
unit ball of $\ell_\infty^N$. Then the map $V : T \to \ell_\infty^N$,
taking $(i,t)$ to $t e_i$, is an isometric embedding.
Therefore, $\delta_n^{(a)}(V(T),\ell_\infty^N) = 0$ for $n \geq N$.
On the other hand, $T$ contains a $2$-separated set of
cardinality $L = 2^N$ (the endpoints of the limbs of $T$).
By Theorem~\ref{prop:kol_numbers}, $d_n(T) \geq 1$ for $n \leq L-1$.
\end{remark}

\section*{Acknowledgments.}

Some of the results of this paper were presented at the 19$^\mathrm {th} $
International Conference on Banach Algebras held at B\c{e}dlewo, July 14--24,
2009. The authors wish to thank the Polish Academy of Sciences, The European
Science Foundation (ESF-EMS-ERCOM partnership) and the Faculty of Mathematics
and Computer Science of the Adam Mickiewicz University at Pozna\'n.
The authors are grateful to the referee for many useful remarks, which
helped to make the presentation more transparent.

\newpage

\bibliographystyle{amsplain}


\end{document}